\documentclass[12pt,reqno]{amsart}

\usepackage{amsfonts,amsmath,amsthm}
\usepackage{amssymb,epsfig}
\usepackage{enumerate} 

\usepackage[text={425pt,650pt},centering]{geometry}
\usepackage{graphicx}
\usepackage{epsfig}
\usepackage{tikz}
\usepackage{tikz-cd}
\usepackage{caption}
\usepackage{array}
\usepackage{varwidth}
\usepackage{color} 
\definecolor{vert}{rgb}{0,0.6,0}

\numberwithin{figure}{section}

\theoremstyle{plain}
\newtheorem{thm}{Theorem}[section]

\newtheorem{ex}{Example}
\newtheorem{lem}[thm]{Lemma}
\newtheorem{cor}[thm]{Corollary}
\newtheorem{prop}[thm]{Proposition}
\theoremstyle{remark}
\newtheorem{rem}{\bf{Remark}}
\numberwithin{equation}{section}

\newcommand{\R}{\mathbb{R}}

\newcommand{\Z}{\mathbb{Z}}

\newcommand{\Lip}{{\rm Lip\,}}

\newcommand{\gam}{\gamma}
\newcommand{\del}{\delta}
\newcommand{\e}{\varepsilon }
\newcommand{\kap}{\kappa}
\newcommand{\lam}{\lambda}
\newcommand{\sig}{\sigma}

\newcommand{\Lam}{\Lambda}

\begin{document}

\title[Homogenization of Infinite-Dimensional Hamilton--Jacobi Equations]
{Periodic Homogenization of Hamilton--Jacobi Equations for Infinite Systems of Indistinguishable Particles}

\author[Seho Park]
{Seho Park}


\address[Seho Park]
{
Department of Mathematics, 
University of Wisconsin-Madison, 480 Lincoln  Drive, Madison, WI 53706, USA}
\email{park646@wisc.edu}

\date{\today}
\keywords{periodic homogenization; infinite-dimensional Hamilton--Jacobi equations; cell problem; effective Hamiltonian; rearrangement invariance; Radon--Nikodym property}
\subjclass[2010]{
35B40, 
37J50, 
49L25, 
35B27, 
35F21, 
35R15 
}

\maketitle
\begin{abstract}
We study the homogenization of first-order Hamilton–Jacobi equations on an infinite-dimensional Hilbert space, motivated by systems of infinitely many indistinguishable particles on the torus. A central difficulty is that the analysis takes place in an infinite-dimensional setting, where the compactness arguments available in finite dimensions break down. The problem is further complicated by the possible nonconvexity of the Hamiltonian, which prevents the direct use of variational methods. Under suitable assumptions on the Hamiltonian and the initial data, we characterize the effective Hamiltonian through an associated cell problem and prove that the solutions converge to the solution of the limiting equation at the rate $O(\e ^{1/3})$. This yields a qualitative and quantitative homogenization result for a class of possibly nonconvex Hamilton–Jacobi equations in infinite dimensions.
\end{abstract}
\section{Introduction}
Hamilton–Jacobi equations in infinite-dimensional spaces arise naturally in the study of systems with infinitely many degrees of freedom, including continuum limits of interacting particle systems and dynamical models with relabeling symmetry for indistinguishable particles \cite{Gomes-Nurbekyan, Gangbo-2, feng, padhye1996fluid}. In this paper, we study the homogenization of first-order Hamilton–Jacobi equations on an infinite-dimensional Hilbert space $V = L^2(I;\R^d)$, where $I = [0,1]^d$. More precisely, we consider equations of the form

\[
    {\rm(CP)_\e } \quad
    \begin{cases}
    u^\e _t + H(\frac{x}{\e }, Du^\e ) = 0
    & \text{in } V \times (0,\infty), \\[1ex]
    u^\e (x,0)=u_0(x)
    & \text{on } V
    \end{cases}
\]
\\[0.5ex]
\noindent where $H:V \times V^{*} \rightarrow \R$ is a given Hamiltonian, $u^\e :V \times [0,\infty) \rightarrow \R$ is the unknown, and $Du^\e $ denotes the Fr\'echet derivative of $u^\e $. Since \(V\) is a Hilbert space, we identify \(V^*\) with \(V\) through the Riesz representation theorem. Our goal is to understand the asymptotic behavior of \(u^\e \) as \(\e \to 0\). A key structural feature of the problem is that, under certain assumptions, the homogenized dynamics depend only on the mean configuration. Thus the effective equation is naturally formulated on \(\R^d\), rather than on the full Hilbert space \(V\).

More precisely, for \(x\in V\), set
\[
    \mathfrak m(x):=\int_I x\,d\lambda_0\in \R^d,
    \qquad
    \iota(q):=q\chi_I\in V,
    \qquad
    Mx:=\iota(\mathfrak m(x)).
\]
Here, $Mx$ denotes the mean configuration of $x$. Then the homogenized limit $u$ will be of the form
\[
    u(x,t)=\widetilde u(\mathfrak m(x),t),
\]
where \(\widetilde u:\R^d\times[0,\infty)\to\R\) solves
\[
(\overline{\mathrm{CP}})\quad
\begin{cases}
    \widetilde u_t+\overline H(D\widetilde u)=0
        &\text{in } \R^d\times(0,\infty),\\
    \widetilde u(q,0)=\widetilde u_0(q)
        &\text{on } \R^d,
\end{cases}
\]
with
\[
    \widetilde u_0:=u_0 \circ \iota.
\]
Here \(\overline H:\R^d\to \R\) is defined through the infinite-dimensional cell problem. In our setting, the Hamiltonian is assumed to be periodic under lattice-valued translations and invariant under measure-preserving bijections.

In finite-dimensional Euclidean spaces, the theory of viscosity solutions provides a robust framework for first-order Hamilton–Jacobi equations \cite{CL83, CEL84, tran2021}. In infinite dimensions, however, the lack of local compactness makes the basic comparison, stability, and existence arguments substantially more delicate. The foundational work of Crandall and Lions established an infinite-dimensional viscosity framework for Hamilton–Jacobi equations, closely connected to ideas from differential games \cite{C-L-1, C-L-2}. In that setting, Banach spaces with the Radon--Nikodym property play an important role because smooth variational perturbation principles are available there \cite{stegall1978}, and these are central to the test-function arguments underlying the theory. Additionally, under coercivity assumptions, Ishii’s Perron-type method yields existence results for the Cauchy and Dirichlet problems \cite{Ishii}. Beyond these foundational results, a broad literature has studied infinite-dimensional Hamilton–Jacobi equations from the viewpoint of optimal control and related problems \cite{barbu, cannarsa}. These developments provide the starting point for our study of homogenization in an infinite-dimensional Hilbert space.

For homogenization problems, formal asymptotic expansions suggest that the limiting dynamics should be governed by an effective equation, while the microscopic oscillations are captured through an associated corrector problem or cell problem. In finite dimensions, this program was developed systematically by Lions, Papanicolaou, and Varadhan \cite{LPV}, and further advanced by Evans through the perturbed test function method \cite{Evans-perturb}. Since then, both qualitative and quantitative aspects of homogenization have been extensively developed \cite{Con96, Con97, capuzzo-rate, mitake2019rate,  TY, han2023rate}. In infinite-dimensional settings, by contrast, the literature remains comparatively limited. Existing results on cell problems and effective dynamics are often tied to convexity, variational structure, or methods from weak KAM theory \cite{fathi2003}.
In particular, Gomes and Nurbekyan study the cell problem in an infinite-dimensional framework \cite{Gomes-Nurbekyan}. This work is closely related to their work on minimizers of variational problems in Hilbert spaces~\cite{gomes2015} and builds on earlier work of Gangbo and Tudorascu on infinite-dimensional weak KAM theory and action-minimizing trajectories in Lagrangian systems~\cite{Gangbo-1, Gangbo-2}. Our problem is also motivated by infinite systems of indistinguishable particles on the $d$-dimensional torus, but we allow general, possibly nonconvex Hamiltonians for which the connection to an underlying Lagrangian or weak KAM structure is not directly available \cite{capuzzo-rate, QTY, TY}. Related homogenization problems and their quantitative behavior have also been studied in Wasserstein space by Ding, Ekren, Han, and Zitridis \cite{Ding-Ekren-Han-Zitridis}. We also note the related viewpoint of Feng \cite{feng}, who studies Hamilton–Jacobi equations in the space of probability measures arising from the hydrodynamic limit of $N$-particle dynamics through a multiscale convergence approach. 

\subsection{Motivation.} 
In this paper, we adopt the random-variable framework of Gomes and Nurbekyan \cite{Gomes-Nurbekyan} for a mechanical system of infinitely many indistinguishable particles on the $d$-dimensional torus ($\mathbb{T}^d$). It is useful to view this setting as the infinite-dimensional analogue of a finite system of indistinguishable particles on the torus. For a system of $N$ particles in $\mathbb{T}^d$, the configuration is described by a point in $(\mathbb{T}^d)^N$, and indistinguishability means that configurations differing only by a permutation of particle labels should be identified. In the infinite-particle setting, we parametrize particle labels by points of \(I=[0,1]^d\). The configuration of the system is encoded by a map $x \in L^2(I; \R^d)$. For each point $i \in I$, $x(i) \in \R^d$ represents the position of the particle labeled by $i$. Equivalently, $x$ can be viewed as an $\R^d$-valued random variable on $I$. Let 
\begin{equation*}
    V:= L^2(I; \R^d)
\end{equation*}
be the corresponding Hilbert space of configurations. To encode the periodicity, we consider the additive subgroup
\begin{align*}
    \Lam := L^2(I; \Z^d) = \{z \in V : z(i) \in \Z^d,\, \lambda_0\text{-a.e. } i \in I\}.
\end{align*}
Here, $\lam_0$ denotes the Lebesgue measure on $I$. This subgroup plays the role of lattice translations, and the quotient $V/\Lam$ is viewed as the infinite-dimensional torus. 

Next, let $\mathcal{G}$ be the set of all bijections $g: I\to I$ such that $g$ and $g^{-1}$ are Borel measurable and both push $\lam_0$ forward to itself. Namely,
\begin{multline*}
    \mathcal{G} :=
    \left\{
    g:I\to I \,;\,
    g \text{ is bijective},\ 
    g,\ g^{-1} \text{ are Borel measurable},\ 
    g_{\#}\lambda_0= g^{-1}_\#\lambda_0 = \lam_0
    \right\}.
\end{multline*}
In other words, $\mathcal G$ is the group of measure-preserving rearrangements of $I$, and it plays the role of the permutation group in the infinite-dimensional setting. For $x\in V$ and $g\in\mathcal G$, the composition $x\circ g$ represents the same particle configuration as $x$. The quotient $(V/\Lam)/\mathcal{G}$ is viewed as an infinite-dimensional symmetric torus. From this viewpoint, the assumptions of periodicity and rearrangement invariance are the natural infinite-dimensional analogues of the symmetries in the finite-particle problem. 

We further define the metric $d_{SS^d}: V\times V \to \R$, the equivalence relation $\sim$, and the quotient $SS^d$ as
\begin{equation*}
    d_{SS^d}(x,y):= \inf_{g\in G, z\in\Lambda} \|x-y\circ g-z\|_{L^2},\qquad
    x\sim y\Leftrightarrow d_{SS^d}(x,y)=0, \qquad
    SS^d:=V/{\sim}.
\end{equation*}
\noindent It is known \cite{Gomes-Nurbekyan} that $(SS^d,d_{SS^d})$ is isometric to the Wasserstein space $(\mathcal{P}(\mathbb{T}^d), W_2)$ and is in particular a compact, complete, separable metric space. This compactness is essential in our analysis to compensate for the lack of local compactness of $V$.

\subsection{Main Assumptions}
The dynamics of an infinite-particle system can be described by the Hamiltonian $H:V\times V \to \R$. Throughout the paper, we assume that $H$ satisfies \rm{(H1)}--\rm{(H4)}. Conditions \rm{(H1)}--\rm{(H2)} hold for all $x,p\in V$, $z\in\Lambda$, and $g\in G$. In \rm{(H3)}, there exists $L_x>0$ such that the first estimate holds for all $x,y,p\in V$, and, for every $R>0$, there exists $L_R>0$ such that the second estimate holds for all $x\in V$ and $p,q\in B_V(0,R)$.
\begin{align*}
    &(\mathrm{H1})\quad H(x + z, p) = H(x,p) &&\text{(Periodicity)} \\
    &(\mathrm{H2})\quad H(x \circ g, p\circ g) = H(x,p) &&\text{(Rearrangement Invariance)} \\
    &(\mathrm{H3})
    \begin{cases}
        |H(x,p) - H(y,p)| \le L_x(1+ \|p\|)\|x-y\|\\
        |H(x,p) - H(x,q)| \le L_R \|p-q\|, \quad p,q \in B(0,R)
    \end{cases} \quad &&\text{(Locally Lipschitz)}\\
    &(\mathrm{H4})\quad \lim_{\|p\| \to \infty} \inf_{x \in V} H(x,p) = +\infty \;&&\text{(Coercivity)}
\end{align*}

\noindent Condition $(\mathrm{H1})$ expresses invariance under lattice-valued translations, while $(\mathrm{H2})$ expresses invariance under relabeling of indistinguishable particles. $(\mathrm{H3})$ is imposed so that the comparison and existence results of Crandall and Lions \cite{C-L-1, C-L-2} apply to both the Cauchy problem and the discounted cell problem. Assumptions $(\mathrm{H3})$ and $(\mathrm{H4})$ are used to construct barriers and obtain uniform Lipschitz estimates. 

For the initial data, we assume:
\begin{align*}
(\mathrm{I1})\quad & u_0(x) = u_0(Mx)\\
(\mathrm{I2})\quad & u_0 \in C_b^1(V)
\end{align*}

\noindent 
Here, $C_b^1(V)$ denotes the space of bounded and continuously Fr\'echet differentiable functions whose derivatives are also bounded. Note that the condition $(\mathrm{I1})$ is stronger than rearrangement invariance. This additional assumption is essential for obtaining the finite-dimensional homogenized limit.

\subsection{Main Results}
In this paper, we establish the homogenization of $\rm(CP)_\e $ under the assumptions above. 

\begin{thm}\label{thm1}
    Assume that \(H\) satisfies \((\mathrm{H}1)\)--\((\mathrm{H}4)\) and that \(u_0\) satisfies \((\mathrm{I}1)\)--\((\mathrm{I}2)\). For each \(\e >0\), let \(u^\e \) be the unique bounded uniformly continuous viscosity solution of \((\mathrm{CP})_\e \). Then there exists a continuous coercive effective Hamiltonian
    \[
        \overline H:\R^d\to\R
    \]
    such that
    \[
        u^\e (x,t)\longrightarrow \widetilde u(\mathfrak m(x),t)
    \]
    uniformly on bounded subsets of \(V\times[0,\infty)\) and \(\widetilde u\) is the unique viscosity solution of $\mathrm{(\overline{CP})}$.
\end{thm}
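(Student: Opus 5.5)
Our plan is to follow the classical Lions--Papanicolaou--Varadhan/Evans programme, with compactness of $SS^d$ replacing compactness of the torus. We do not need convexity at any point.

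\textbf{Step 1: the cell problem.} For $P\in\R^d$ and $\delta>0$ we consider the discounted cell problem
\[
\delta v^\delta + H\bigl(y,\iota(P)+Dv^\delta\bigr)=0 \quad\text{in } V .
\]
The Crandall--Lions comparison and the Perron method of Ishii, available under (H3)--(H4), give a unique bounded uniformly continuous viscosity solution. It satisfies $|\delta v^\delta|\le C_P$, using the constant barriers $\mp\sup_y|H(y,\iota(P))|/\delta$.

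By uniqueness together with (H1)--(H2), $v^\delta$ is $\Lambda$-periodic and $\mathcal G$-invariant. Hence $v^\delta$ descends to $SS^d$.

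Coercivity (H4) yields a Lipschitz bound $\|Dv^\delta\|\le R_P$ uniform in $\delta$. We would prove this by the standard doubling-variables argument $v^\delta(x)-v^\delta(y)-K\|x-y\|$. In infinite dimensions this argument needs a Stegall (Radon--Nikodym) perturbation to produce maxima, which is why the RNP is mentioned.

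Invariance then upgrades the bound to Lipschitz continuity with respect to $d_{SS^d}$. Since $(SS^d,d_{SS^d})$ is compact, Arzel\`a--Ascoli applies to $w^\delta:=v^\delta-v^\delta(0)$, and $\delta v^\delta(0)\to -\overline H(P)$ along a subsequence. Uniqueness of the constant follows by comparing two limits in the usual way, again with comparison in $V$ and boundedness via compactness of $SS^d$. The same argument gives the full convergence and the uniform rate $|\delta v^\delta+\overline H(P)|\le C\delta$. Continuity of $\overline H$ follows from (H3), and coercivity from (H4).

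\textbf{Step 2: a priori bounds for $u^\e$.} Since $u_0\in C^1_b$, the functions $u_0\pm Ct$ are barriers. Comparison then gives $\|u^\e_t\|\le C$ and, via (H4), $\|Du^\e\|\le C$ uniformly in $\e$.

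By (I1), $u_0$ depends only on $\mathfrak m(x)$. We then want to show that the relaxed half-limits
\[
\overline u=\limsup{}^{*}u^\e,\qquad \underline u=\liminf{}_{*}u^\e
\]
depend only on $\mathfrak m(x)$. Directions orthogonal to constants are exactly those in which the fast variable oscillates, and the equation forces the limit to be flat in them. To prove this we would use the perturbed test function method: a test function of the form $\phi(x,t)+\e\, w^\delta(x/\e)$ with $P$ equal to the $\R^d$-gradient. The corrector absorbs $H(x/\e,\cdot)$ only when $D\phi=\iota(P)$ is constant in $I$. So we first show that $\overline u,\underline u$ are constant along $\{\mathfrak m=\text{const}\}$, by showing they solve $\pm(\cdot)_t\le$ something that is coercive in the non-mean component. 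Alternatively, we would work directly with the reduced functions $\limsup^*$ taken over $\mathfrak m(x)=q$.

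\textbf{Step 3: perturbed test function.} Suppose $\widetilde\phi$ touches $\widetilde{\overline u}$ from above at $(q_0,t_0)$, and set $P=D\widetilde\phi(q_0,t_0)$. We use
\[
\Phi^\e(x,t)=\widetilde\phi(\mathfrak m(x),t)+\e\, v^{\delta}(x/\e),\qquad \delta=\e,
\]
with a Stegall perturbation and a penalization $\|x-x_0\|^2$ to obtain a maximum point of $u^\e-\Phi^\e$ in $V$. We then double the variables between the slow and fast arguments of $v^\delta$ and use (H3). Passing to the limit gives $\widetilde\phi_t+\overline H(P)\le 0$, and symmetrically for the subsolution. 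Comparison for $(\overline{\mathrm{CP}})$ in $\R^d$ then gives $\widetilde{\overline u}=\widetilde{\underline u}=\widetilde u$, hence local uniform convergence. Uniformity on bounded sets follows from the uniform Lipschitz bounds together with the fact that the limit depends on $x$ only through $\mathfrak m(x)$.

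\textbf{Main obstacle.} We expect the hardest step to be Step 3, together with the reduction in Step 2. Maxima in $V$ need not exist, and the limit points of the maximizers $x_\e$ must be controlled without local compactness of $V$. We would handle this by exploiting that $x_\e/\e$ matters only modulo $\Lambda$ and $\mathcal G$, that is, in the compact space $SS^d$. The slow variable, meanwhile, enters only through $\mathfrak m(x_\e)\in\R^d$, which lies in a compact set.
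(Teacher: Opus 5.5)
\textbf{Overall.} Your Step 1 and the mechanics of Step 3 match the paper closely: the discounted cell problem, invariance under $\Lam$ and $\mathcal G$, passage to the compact quotient $SS^d$, and the perturbed test function argument that doubles $y$ against $x/\e$ and uses Stegall's principle. The gap is the reduction in Step 2. Both Step 3 and your final sentence rest on it.

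\textbf{The gap.} The mechanism you propose for flatness is not the right one. Nothing in the equation forces the limit to be flat in the $Y$-directions. For an $x$-independent Hamiltonian such as $H(x,p)=\frac12\|p\|^2$ there is no homogenization at all. Its solutions are flat in $Y$ exactly when $u_0$ is. Flatness is inherited from (I1), not produced by any coercivity ``in the non-mean component.''

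Moreover, flatness of the limit would not suffice even if you had it. Bounded sets of $V$ are not compact, so half-relaxed limits do not control $\sup_{B(x_0,r)}u^\e$. Consequently, approximate maximizers of $u^\e-\Phi^\e-\|x-x_0\|^2$ need not approach $x_0$. Likewise, ``equi-Lipschitz plus a limit depending only on $\mathfrak m(x)$'' does not give uniform convergence on bounded sets. The functions $\max\{0,1-\|x-e_n\|\}$ with $(e_n)$ orthonormal are $1$-Lipschitz and tend pointwise to $0$, yet each equals $1$ at $e_n$. Your remark that ``the slow variable enters only through $\mathfrak m(x_\e)$'' is true of $\Phi^\e$ but not of $u^\e$. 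It therefore presupposes exactly the estimate that is missing.

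\textbf{The missing idea.} You need an $\e$-level statement that $u^\e$ itself is nearly a function of the mean. The paper obtains it in three steps.
\begin{enumerate}
\item For $z\in\Lam\cap Y$, the translate $u^\e(\cdot+\e z,\cdot)$ solves the same Cauchy problem. Indeed, (H1) gives $H((x+\e z)/\e,p)=H(x/\e,p)$, and (I1) gives $u_0(x+\e z)=u_0(x)$. Uniqueness then yields $u^\e(x+\e z,t)=u^\e(x,t)$.
\item Every $x\in V$ can be written as $x=Mx+\e z_x+x_\e$ with $z_x\in\Lam\cap Y$ and $\|x_\e\|\le C(d)\e$. To see this, round $P_Yx/\e$ pointwise to $\Z^d$, then remove the resulting mean, which lies in $[-1/2,1/2]^d$, with an indicator-valued element of $\Lam$.
\item With the uniform Lipschitz bound, this gives $|u^\e(x,t)-u^\e(Mx,t)|\le C\e$. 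Compactness then reduces to Arzel\`a--Ascoli on $Y^\perp\cong\R^d$, and your Step 3 goes through on balls of $V$.
\end{enumerate}

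\textbf{Alternative repair.} Your one-line ``reduced functions'' idea could also be made rigorous, but not with the $\|x-x_0\|^2$ penalization of Step 3. Set
\[
S^\e(q,t)=\sup_{\mathfrak m(x)=q}u^\e(x,t),\qquad I^\e(q,t)=\inf_{\mathfrak m(x)=q}u^\e(x,t).
\]
These are equi-Lipschitz on $\R^d\times[0,\infty)$, as one sees by translating by $(q'-q)\chi_I$. Localize only in $(\mathfrak m(x),t)$ to show that their limits are a sub- and a supersolution. By (I1) both take the initial value $\widetilde u_0$, so comparison in $\R^d$ forces $S^\e-I^\e\to0$, which gives both flatness and uniform convergence on bounded sets. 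As written, however, the proposal carries out neither argument.
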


\begin{rem}[Model reduction]\label{rem1}
A key consequence of Theorem~1.1 is the reduction of the limiting problem from an infinite-dimensional equation to a finite-dimensional one. Although the oscillatory equations \(\mathrm{(CP)_\e }\) are posed on \(V=L^2(I;\R^d)\), the homogenized solution sees only the mean configuration. 
This model-reduction phenomenon is not apparent at the level of \(\mathrm{(CP)_\e }\), where both the equation and the cell problem are formulated on the infinite-dimensional space. It is one of the main structural features of the result.
\end{rem}

We first establish compactness and convergence of \(u^\e \) as \(\e \to0\) in Section~\ref{sec2}. We then characterize the effective Hamiltonian $\overline{H}$ via an associated cell problem and show that $u$ solves the effective equation in Section~\ref{sec3}. Our approach combines the viscosity framework in infinite dimensions with the construction of barriers in Perron’s method, the analysis of the associated cell problem, and an adaptation of the perturbed test function method that does not rely on the convexity of $H$. The preceding remark highlights an essential point: the limit is governed only by the mean configuration, so the effective equation is finite-dimensional even though the original oscillatory problem is genuinely infinite-dimensional.

Our second result provides a quantitative convergence rate.
\begin{thm}\label{thm2}
    Under the assumptions of Theorem 1.1, for every \(T>0\) there exists \(C_T>0\), independent of \(\e \), such that
    \[
        \sup_{(x,t)\in V\times[0,T]}
        \left|u^\e (x,t)-\widetilde u(\mathfrak m(x),t)\right|
        \le C_T\e ^{1/3}.
    \]
\end{thm}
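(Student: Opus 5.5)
We follow the Capuzzo-Dolcetta--Ishii strategy: approximate correctors from the discounted cell problem, then double the variables with a penalization in \(V\). Throughout, \(\widetilde u_0=u_0\circ\iota\in C^1_b(\R^d)\) by (I2), and \(\overline H\) is coercive and continuous. Hence \(\widetilde u\) is Lipschitz in \((q,t)\) and bounded on \([0,T]\). We regularize \(\widetilde u\) by sup- and inf-convolution, or equivalently we use the standard fact that its Lipschitz constant \(\|D\widetilde u\|_\infty\le R_0\) is independent of \(\e\).

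\emph{Approximate correctors.} For \(P\in\R^d\) with \(|P|\le R_0\) and \(\delta>0\), let \(v^\delta=v^\delta(\cdot;P)\) solve, in the Crandall--Lions sense on \(V\),
\[
\delta v^\delta+H(y,\iota(P)+Dv^\delta)=0 .
\]
By (H1)--(H2), \(v^\delta\) is \(\Lambda\)-periodic and \(\mathcal G\)-invariant, so it descends to the compact space \(SS^d\). The Section~3 analysis gives three estimates: \(\|\delta v^\delta+\overline H(P)\|_\infty\le C\delta\); \(\|Dv^\delta\|\le C\), via the barriers from (H3)--(H4); and the difference \(|\delta v^\delta(\cdot;P)-\delta v^\delta(\cdot;Q)|\le C|P-Q|\). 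The first estimate is the key one. It follows by comparing \(v^\delta\) with its oscillation: \(\operatorname{osc} v^\delta\le C\operatorname{diam}(SS^d)\) by the Lipschitz bound in the \(d_{SS^d}\) metric, which is available because \(v^\delta\) is invariant and Lipschitz in \(V\), hence Lipschitz for \(d_{SS^d}\).

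\emph{Doubling variables.} We prove the upper bound \(u^\e-\widetilde u\circ\mathfrak m\le C\e^{1/3}\); the lower bound is symmetric. Set
\[
\Phi(x,y,t,s)=u^\e(x,t)-\widetilde u(\mathfrak m(y),s)-\e v^\delta\!\Big(\tfrac{x}{\e};\tfrac{\mathfrak m(x)-\mathfrak m(y)}{\eta}\Big)-\frac{\|x-y\|^2+|t-s|^2}{2\eta}-Kt .
\]
Two difficulties arise and we handle them as follows.
\begin{enumerate}[(i)]
\item Because \(V\) is not locally compact, a maximum need not exist. We therefore add a smooth perturbation from Stegall's variational principle; \(V\) is Hilbert and so has the Radon--Nikodym property. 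Its derivative has size \(\theta\to0\) and enters only as an \(O(\theta)\) error.
\item The dependence of the corrector on \(P\) is only Lipschitz, not \(C^1\). So we instead freeze \(P\) at a grid point, or fix \(P=(\mathfrak m(\hat x)-\mathfrak m(\hat y))/\eta\) after a preliminary maximization, in the usual way; this costs a term \(C\e/\eta\).
\end{enumerate}
The test with respect to \(x\) is the key point: since \(D(\mathfrak m(x))=\iota\) in \(V\), the momentum seen by \(u^\e\) is \(\iota(P)+(\hat x-\hat y)/\eta\)-type terms plus \(Dv^\delta\). The \(L^2\) penalization forces \(\|\hat x-\hat y\|\le C\eta\). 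To isolate the mean, replace \(\|x-y\|^2\) by \(|\mathfrak m(x)-\mathfrak m(y)|^2\) plus \(\|(x-Mx)-(y-My)\|^2\). For the latter, note \(\widetilde u\circ\mathfrak m\) is independent of the fluctuation, so we can take \(y\) with the same fluctuation as \(x\) and that term vanishes. The subsolution inequality for \(u^\e\) at \(\hat x\) then reads
\[
K+H\big(\hat x/\e,\iota(P)+Dv^\delta\big)\le C\theta ,
\]
and the corrector gives \(H(\cdot)\approx-\delta v^\delta\approx\overline H(P)+O(\delta)\). The supersolution inequality for \(\widetilde u\) at \((\hat y,\hat s)\) gives \(\overline H(P)\ge -(\text{time derivative})\). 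Combining with \(K\) yields a contradiction unless \(K\lesssim \delta+\e/\eta\).

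\emph{Optimization.} The errors are \(O(\eta)\) from the Lipschitz penalization, \(O(\e\|v^\delta\|)=O(\e/\delta)\), \(O(\delta)\), and \(O(\e/\eta)\) from the \(P\)-dependence. Bounding the supremum at \(t=0\) using (I1) and the \(C^1\) initial datum, then choosing \(\delta=\e^{1/3}\) and \(\eta=\e^{1/3}\) (balancing \(\e/\delta\), \(\delta\) and \(\eta\), with \(\e\)-level losses from the \(\delta\)-dependence of the \(P\)-Lipschitz constant, which scales like \(1/\delta\)), gives \(C_T\e^{1/3}\).

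The main obstacle is step (ii) combined with (i): we need a variational perturbation that preserves the invariance so that the corrector estimates on \(SS^d\) remain usable. We also need the \(P\)-Lipschitz estimate for \(\delta v^\delta\), which we will try to derive by comparison between the discounted problems for \(P\) and \(Q\) using (H3).
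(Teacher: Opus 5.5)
Your architecture is the paper's. You use discounted correctors with discount $\delta=\e^{1/3}$ (the paper's $\lambda=\e^\theta$), a penalization at scale $\eta=\e^{1/3}$ (the paper's $\e^\beta$), Stegall's principle in place of exact maxima, and a reduction of the comparison variable to the mean. Your remark that the fluctuation of $y$ can be matched to that of $x$ is exactly why the paper takes $y\in\R^d$ and penalizes only $|\mathfrak m(x)-y|^2$.

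Two bookkeeping corrections. First, the $P$-dependence of the corrector costs $C\e/(\delta\eta)$, not $C\e/\eta$, because $\e v^\delta$ is only $C\e/\delta$-Lipschitz in $P$. Second, the exponent $1/3$ is forced by balancing $\delta$, $\eta$ and $\e/(\delta\eta)$; the terms $\e/\delta$, $\delta$, $\eta$ alone would allow $\e^{1/2}$.

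\textbf{The gap.} The genuine gap is the central step. There you write the subsolution inequality for $u^\e$ as $K+H(\hat x/\e,\iota(P)+Dv^\delta)\le C\theta$ and then invoke $H(\cdot)\approx-\delta v^\delta$. The discounted corrector is only a Lipschitz viscosity solution, so $Dv^\delta$ need not exist. Hence $x\mapsto\e v^\delta(x/\e;\cdot)$ is not an admissible $C^1$ piece of a test function for $u^\e$. As written, this is the formal perturbed-test-function computation rather than a viscosity argument.

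\textbf{The missing idea: decouple the fast variable.} In Steps~3--4 of the paper this is done as follows.
\begin{enumerate}
\item Replace $\e v^\delta\bigl(x/\e;(\mathfrak m(x)-\hat y)/\eta\bigr)$ by $\e v^\delta\bigl(\xi;(\mathfrak m(z)-\hat y)/\eta\bigr)$, adding the penalties $\|x-\e\xi\|^2/(2\alpha)$ and $\|x-z\|^2/(2\alpha)$.
\item Use a second Stegall localization to produce approximate maximizers.
\item Apply the subsolution test for $u^\e$ in $x$ and the supersolution test for $v^\delta$ in $\xi$. Both tests share the momentum $(x-\e\xi)/\alpha$, which cancels by the Lipschitz continuity of $H$; the spatial arguments differ by $\|x/\e-\xi\|\le C\alpha/\e$.
\item The leftover $\|x-z\|/\alpha\le C\e/(\delta\eta)$ is exactly the $P$-cost.
\item Show that these maximizers return to $(\hat x,\hat x/\e,\hat x,\hat t)$ as $\alpha\downarrow0$. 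This is needed so that the momentum parameter and the time quotient match those in the supersolution test for $\widetilde u$ at $(\hat y,\hat s)$.
\end{enumerate}

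\textbf{The $y$-side.} The same defect appears here. The map $y\mapsto\e v^\delta\bigl(\hat x/\e;(\mathfrak m(\hat x)-y)/\eta\bigr)$ is only $C\e/(\delta\eta)$-Lipschitz. Extracting an element of $D^-\widetilde u(\hat y,\hat s)$ therefore needs the splitting Lemma~\ref{lem4.3}. Freezing $P$ after a preliminary maximization is workable, but it leaves a cone-type Lipschitz perturbation that must be smoothed or split in the same way. Freezing $P$ at a grid point would make the functional discontinuous.

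\textbf{Smaller points.} You also need Lemma~\ref{lem:terminal-contact} when $\hat t=T$ or $\hat s=T$. Each Stegall step needs the quadratic localization to force an interior maximizer. Finally, the obstacle you single out is not one. The bounds of Lemma~\ref{lem4.2} are sup-norm estimates on all of $V$, so non-invariant linear Stegall perturbations are harmless. Compactness of $SS^d$ is needed only to construct $\overline H$ and the corrector.
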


In Section~\ref{sec4}, we develop the quantitative argument that extends the finite-dimensional rate estimates \cite{capuzzo-rate} to the present infinite-dimensional, possibly nonconvex setting. A distinct feature of Theorem~\ref{thm2} is that it compares an infinite-dimensional object, \(u^\e\), with the effectively finite-dimensional limit \(u\). Therefore, although the estimate in Theorem~\ref{thm2} formally resembles the finite-dimensional rate estimate, its proof requires additional care: the oscillatory solution, the cell problem, and the correctors are posed on the full Hilbert space, while the limiting dynamics live only on the finite-dimensional mean variable. 

Together, these results establish qualitative and quantitative homogenization for possibly nonconvex Hamilton–Jacobi equations on the infinite-dimensional Hilbert space with the structural symmetries considered here. The analysis must overcome the lack of local compactness and the absence of a direct variational structure under the restrictive assumption that the initial data depend only on the mean configuration.

\begin{ex}[A mean-field Hamiltonian]
Let \(V_0,W_0\in W^{1,\infty}(\mathbb T^d)\), and view them as \(\mathbb Z^d\)-periodic functions on \(\R^d\). For \(x,p\in V=L^2(I;\R^d)\), define
\[
\mathcal V(x)
:=
\int_I V_0(x(i))\,d\lambda_0(i)
+
\frac12\int_I\int_I W_0(x(i)-x(j))\,d\lambda_0(i)\,d\lambda_0(j),
\]
and set
\[
        H(x,p):=\frac12\|p\|_{L^2}^2+\mathcal V(x).
\]
\end{ex}
This Hamiltonian represents the sum of kinetic energy, external potential energy ($V_0$) and mean-field pairwise interaction energy ($W_0$). Since \(V_0\) and \(W_0\) are periodic on \(\mathbb T^d\), we have $\mathrm{(H1)}$.
Moreover, since the integrals defining \(\mathcal V(x)\) depend only on the distribution of the particle configuration \(x\), and since \(\|p\circ g\|_{L^2}=\|p\|_{L^2}\), the Hamiltonian is invariant under measure-preserving rearrangements $\mathrm{(H2)}$.
The bounded Lipschitz assumptions on \(V_0\) and \(W_0\) imply that \(\mathcal V\) is Lipschitz on \(V\). Hence $\mathrm{(H3)}$ holds. Finally, since \(\mathcal V\) is bounded, the quadratic term in $H$ yields the coercivity condition $\mathrm{(H4)}$. Therefore \(H\) satisfies \(\mathrm{(H1)}\)--\(\mathrm{(H4)}\).

\section{Well-posedness, regularity, and compactness}\label{sec2}

We first record the infinite-dimensional viscosity framework used in this paper. We use the Radon--Nikodym property through Stegall's variational principle~\cite{stegall1978}. A Banach space $V$ is said to have the Radon--Nikodym property if, for every bounded lower semicontinuous function $\varphi$ on a closed ball $B \subset V$ and every $\e > 0$, there exists $p \in V^*$ with $\|p\| \leq \e$ such that $\varphi +p$ attains its minimum on $B$. The corresponding maximum statement follows by changing signs.  Since $V=L^2(I;\mathbb R^d)$ is a Hilbert space, both $V$ and every finite product of copies of $V$ and $\mathbb R$ have this property.

\subsection{Well-posedness}
We use the comparison and existence theory of Crandall and Lions for Hamilton--Jacobi equations in Banach spaces with the Radon--Nikodym property \cite{C-L-1, C-L-2}. In the present Hilbert-space setting, we take
\[
d(x,y)=\|x-y\|,
\qquad
\nu(x)=\sqrt{1+\|x\|^2}
\]
for the functions appearing in \cite{C-L-1,C-L-2}. Note that the derivatives of \(d\), away from the diagonal, and of \(\nu\) are uniformly bounded. Moreover, for \(x\neq y\) and \(\ell\geq0\),
\begin{equation}\label{eq:s2_1}
    H\left(y,\ell\frac{x-y}{\|x-y\|}\right)
    -
    H\left(x,\ell\frac{x-y}{\|x-y\|}\right)
    \leq
    L_x(1+\ell)\|x-y\|.
\end{equation}
Together with \rm{(H3)}, \eqref{eq:s2_1} verifies the assumption needed in \cite{C-L-1, C-L-2}.

For each fixed \(\varepsilon>0\), the Hamiltonian
\[
    H^\varepsilon(x,p):=H(x/\varepsilon,p)
\]
satisfies the same structural conditions, with spatial Lipschitz constant \(L_x/\varepsilon\).  We therefore obtain the following proposition from the theory in \cite{C-L-1, C-L-2}.

\begin{prop}\label{prop:wellposedness}
Let $T>0$.
\begin{enumerate}
\item For every $\varepsilon>0$ and every $g\in\operatorname{BUC}(V)$, the
Cauchy problem
\begin{equation}\label{eq:s2_2}
\begin{cases}
w_t+H(x/\varepsilon,Dw)=0 & \text{in }V\times(0,T),\\
w(x,0)=g(x) & \text{on }V
\end{cases}
\end{equation}
has a unique viscosity solution $w\in\operatorname{BUC}(V\times[0,T])$.
\item Let $w_1,w_2$ be the viscosity solutions of \eqref{eq:s2_2} with initial data $g_1,g_2$, respectively. Then
\[
\sup_{V\times[0,T]}|w_1-w_2|
\le \|g_1-g_2\|_{L^\infty(V)}.
\]
\item For every $\lambda>0$ and $p\in\mathbb R^d$, the discounted cell
problem
\[
\lambda v+H(y,p\chi_I+Dv)=0\qquad\text{in }V
\]
has a unique solution $v\in\operatorname{BUC}(V)$.
\end{enumerate}
\end{prop}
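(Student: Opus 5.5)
The plan is to verify that the hypotheses of the Crandall--Lions theory \cite{C-L-1, C-L-2} hold for each equation in Proposition~\ref{prop:wellposedness}, and then read off existence, uniqueness and the contraction estimate.

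\emph{Structural conditions for $H^\varepsilon$.} We check that $H^\varepsilon(x,p)=H(x/\varepsilon,p)$ satisfies the standing assumptions. By (H3),
\[
|H^\varepsilon(x,p)-H^\varepsilon(y,p)|\le \tfrac{L_x}{\varepsilon}(1+\|p\|)\|x-y\|,
\]
and $H^\varepsilon$ is locally Lipschitz in $p$ with the same constant $L_R$ as $H$. Hence \eqref{eq:s2_1} holds for $H^\varepsilon$ with $L_x$ replaced by $L_x/\varepsilon$. It is also uniformly continuous in $(x,p)$ on sets of the form $V\times B(0,R)$.

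\emph{Comparison.} For (2) we use the doubling-of-variables comparison theorem in spaces with the Radon--Nikodym property. We penalize with $\|x-y\|^2/\delta+|t-s|^2/\delta+\beta\nu(x)+\beta\nu(y)$, where $\nu(x)=\sqrt{1+\|x\|^2}$ is used to localize on the unbounded space. Stegall's principle supplies a small linear perturbation so that a maximum is actually attained. The error terms from the $\beta\nu$ perturbation are controlled because $D\nu$ is bounded and $H$ is locally Lipschitz in $p$. The spatial dependence is controlled by \eqref{eq:s2_1}. A standard argument shows $\|x-y\|^2/\delta\to0$, and this gives $\sup(w_1-w_2)\le\sup(g_1-g_2)^+$.

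\emph{Consequences of comparison.} Applying the inequality symmetrically gives the $L^\infty$ contraction in (2). Taking $g_1=g_2$ gives uniqueness in (1).

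\emph{Existence for the Cauchy problem.} For $g\in C^1_b$ (or Lipschitz) we build barriers $g\pm Ct$. By (H3), with $C=\sup_x|H(x,Dg)|$ finite because $Dg$ is bounded, these are a supersolution and a subsolution. Ishii's Perron method \cite{Ishii}, as adapted to Banach spaces in \cite{C-L-2}, then produces a solution, which is BUC by comparison. For general $g\in\mathrm{BUC}(V)$ we approximate $g$ uniformly by smoother functions, for instance inf/sup-convolutions in the Hilbert space. The contraction estimate (2) makes the corresponding solutions Cauchy in sup norm, and stability of viscosity solutions under uniform limits passes the limit to a solution.

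\emph{The discounted cell problem.} For (3) the Hamiltonian $\tilde H(y,r)=H(y,p\chi_I+r)$ inherits (H3). The constants $\mp\lambda^{-1}\sup_y|H(y,p\chi_I)|$ are a sub- and a supersolution; the supremum is finite by (H3) and the periodicity/compactness reduction (or simply by assumed boundedness of $H(\cdot,q)$ for fixed $q$). Comparison for the stationary equation with $\lambda>0$ follows by the same doubling argument, and the $\lambda v$ term gives strict monotonicity. Perron's method then yields the unique BUC solution.

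The main obstacle is the comparison step on the unbounded, non-locally-compact space. There, maxima need not exist, and one must combine Stegall perturbations with the $\nu$-localization while keeping the $p$-dependence under control. Since $H$ is only locally Lipschitz in $p$, this requires first showing that the relevant gradients stay bounded, which uses the Lipschitz regularity of the data or coercivity (H4). I would handle this by first proving comparison for Lipschitz data, where (H4) gives a priori gradient bounds, and then extending by density.
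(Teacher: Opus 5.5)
Your proposal takes essentially the paper's route: check that $H^\varepsilon$ satisfies the Crandall--Lions structural hypotheses (with $d(x,y)=\|x-y\|$, $\nu(x)=\sqrt{1+\|x\|^2}$, inequality \eqref{eq:s2_1} with constant $L_x/\varepsilon$, and (H3)), then read off comparison, the $L^\infty$ contraction, existence, and solvability of the discounted problem from that theory; your doubling/Stegall/Perron sketch just unpacks what the paper cites. Two small corrections: finiteness of $\sup_x|H(x,Dg(x))|$ needs the periodicity (H1) as in Lemma~\ref{lem:H-bounded-on-bounded-p}, not (H3) alone; and the obstacle in your last paragraph is not real, since for bounded $w_1,w_2$ the test momenta $2(x-y)/\delta$ are $O(\delta^{-1/2})$ uniformly in $\beta$ and in the Stegall perturbation, so local Lipschitz continuity in $p$ suffices once $\beta$ and the perturbation are sent to $0$ before $\delta\to0$, and no a priori gradient bound, coercivity, or density argument is needed for comparison.
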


\subsection{Uniform Lipschitz estimates}
We next establish a uniform Lipschitz estimate using the barrier argument from Perron's method \cite{Ishii}. We first record a lemma giving a uniform bound for $H$ on bounded subsets of the momentum variable.

\begin{lem}
\label{lem:H-bounded-on-bounded-p}
For every $R>0$,
\[
    C_R:=\sup_{x\in V, \|p\|\le R}|H(x,p)|<\infty.
\]
\end{lem}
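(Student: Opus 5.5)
The bound reduces everything to a single reference value, $H(0,0)$, using the two halves of (H3). First I control the $x$-dependence at a fixed momentum, then the $p$-dependence at a fixed position. The obstacle is that $V$ is unbounded, so the spatial Lipschitz bound $L_x(1+\|p\|)\|x-y\|$ alone cannot give a uniform estimate. Periodicity (H1) lets me move $x$ into a bounded fundamental domain modulo $\Lambda$.

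\emph{Reduction to a bounded set of $x$.} Given $x\in V$, set $z(i):=\lfloor x(i)\rfloor$, taking the floor componentwise. This $z$ is measurable and $\Z^d$-valued. Moreover $|z(i)|\le |x(i)|+\sqrt d$, so $z\in L^2(I;\Z^d)=\Lambda$. The remainder $y:=x-z$ takes values in $[0,1)^d$ almost everywhere, hence $\|y\|_{L^2}\le \sqrt d$ because $\lambda_0(I)=1$. By (H1), $H(x,p)=H(y,p)$. It therefore suffices to bound $|H(y,p)|$ for $\|y\|\le\sqrt d$ and $\|p\|\le R$.

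\emph{Lipschitz estimates.} By the first estimate in (H3),
\[
|H(y,p)-H(0,p)|\le L_x(1+\|p\|)\|y\|\le L_x(1+R)\sqrt d .
\]
By the second estimate in (H3), applied with $p$ and $0$ in $B(0,R)$,
\[
|H(0,p)-H(0,0)|\le L_R\|p\|\le L_R R .
\]
Combining the two,
\[
C_R\le |H(0,0)|+L_x(1+R)\sqrt d+L_R R<\infty .
\]
If $B(0,R)$ is understood as the open ball, I apply the estimate with $L_{2R}$ instead, which covers the closed ball $\|p\|\le R$.

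The only nonroutine step is the measurable choice of the lattice translation $z$ and the check that it lies in $L^2$. Both follow from the pointwise bound $|z|\le|x|+\sqrt d$ together with the measurability of the floor function. Rearrangement invariance (H2) and coercivity (H4) are not needed for this lemma.
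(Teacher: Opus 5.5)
Your proposal is correct and follows essentially the same route as the paper: you shift $x$ by a measurable $\Lambda$-valued integer part using (H1), then apply both halves of (H3) around the reference value $H(0,0)$. The only differences are that you use the componentwise floor instead of a nearest-integer map, which gives the bound $\sqrt d$ instead of $\sqrt d/2$, and that you explicitly check $z\in\Lambda$ and the open-versus-closed ball issue; neither difference matters.
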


\begin{proof}
Let $\mathcal N:\mathbb R^d\to\mathbb Z^d$ be a Borel nearest-integer map, chosen coordinatewise with a fixed convention at ties. Set $z_x :=\mathcal N \circ x$.  Then $z_x\in\Lambda$ and
\[
    \|x-z_x\|_{L^2}\le \sqrt d / 2.
\]
By periodicity and \rm{(H3)}, for $\|p\|\le R$,
\begin{align*}
|H(x,p)|
=|H(x-z_x,p)| \le |H(0,0)|+L_RR+L_x(1+R)(\sqrt d / 2).
\end{align*}
\end{proof}

Set
\[
R_0:=\|Du_0\|_{L^\infty(V)}
\qquad\text{and}\qquad
C_0:=
\sup_{y\in V, \|p\|\leq R_0} |H(y,p)|.
\]
This constant is finite by Lemma~\ref{lem:H-bounded-on-bounded-p} and
\rm{(I2)}.  The functions
\[
    \underline u(x,t)=u_0(x)-C_0t,
    \qquad
    \overline u(x,t)=u_0(x)+C_0t
\]
are, respectively, a viscosity subsolution and supersolution of $\mathrm{(CP)_\e}$. Hence the comparison principle gives
\begin{equation}\label{eq:s2_3}
 u_0(x)-C_0t\le u^\varepsilon(x,t)\le u_0(x)+C_0t.
\end{equation}

The uniform Lipschitz continuity of $u^\e$ in time will follow from~\eqref{eq:s2_3}, the comparison principle and the semigroup property. See the details in Proposition~\ref{prop:uniform-lipschitz}. We next establish its uniform Lipschitz continuity in the spatial variable through the following steps. We first record the Lipschitz regularity of viscosity subsolutions of the eikonal equation.

\begin{lem}
\label{lem:eikonal-lipschitz}
Let $w\in\operatorname{BUC}(V)$ be a viscosity subsolution of
\[
\|Dw\|-L\le0\qquad\text{in }V.
\]
Then $w$ is $L$-Lipschitz on $V$.
\end{lem}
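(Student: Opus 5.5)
We argue by contradiction with a doubling-type construction, using Stegall's variational principle (the Radon--Nikodym property of $V$) in place of the compactness that is unavailable in infinite dimensions. Fix $x_0,y_0\in V$ with $x_0\neq y_0$ and suppose, to reach a contradiction, that
\[
w(x_0)-w(y_0)>L'\|x_0-y_0\|
\]
for some $L'>L$. The plan is to fix $y_0$ and consider the auxiliary function
\[
\Phi(x):=w(x)-w(y_0)-L''\|x-y_0\|,
\]
where $L<L''<L'$. Note that $\Phi$ is bounded above, since $w$ is bounded. We also have $\Phi(x_0)>(L'-L'')\|x_0-y_0\|=:\eta>0$. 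On the other hand, $\Phi(x)\le w(x)-w(y_0)\to 0$ as $x\to y_0$, by uniform continuity of $w$. Hence near $y_0$, say on $B(y_0,\rho)$ for small $\rho$, we have $\Phi<\eta/2$.

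Next, perturb $\Phi$ so that it attains a maximum. Restrict $\Phi$ to a closed ball $B$ centered at $y_0$ with radius $R\gg\|x_0-y_0\|$. This ball contains $x_0$. Since $\Phi$ is bounded and upper semicontinuous on $B$, Stegall's principle (maximum version) gives $p\in V$ with $\|p\|\le\delta$ such that $\Phi+\langle p,\cdot\rangle$ attains its maximum on $B$ at some $\hat x$. We choose $\delta$ small, specifically $\delta R<\eta/4$ and $\delta<L''-L$. Then
\[
\Phi(\hat x)+\langle p,\hat x-y_0\rangle\ \ge\ \Phi(x_0)+\langle p,x_0-y_0\rangle\ >\ \eta-\eta/4 .
\]
Combined with $|\langle p,\hat x-y_0\rangle|\le\delta R<\eta/4$, this forces $\Phi(\hat x)>\eta/2$, so $\hat x\notin B(y_0,\rho)$; in particular $\hat x\neq y_0$.

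The main obstacle is making sure $\hat x$ lies in the interior of $B$, so that it is a genuine local maximum. The fix is to take $R$ large relative to $\|w\|_\infty$. If $\|\hat x-y_0\|=R$, then
\[
\Phi(\hat x)+\langle p,\hat x-y_0\rangle\le 2\|w\|_\infty-(L''-\delta)R,
\]
and this is negative for $R>2\|w\|_\infty/(L''-\delta)$, contradicting the lower bound $\eta/2$ just obtained. So $\hat x$ is an interior maximum point of $w-\psi$, where
\[
\psi(x):=L''\|x-y_0\|-\langle p,x\rangle .
\]
Away from $y_0$, $\psi$ is Fréchet $C^1$ with
\[
D\psi(\hat x)=L''\frac{\hat x-y_0}{\|\hat x-y_0\|}-p .
\]
The subsolution property applied at $\hat x$ then gives $\|D\psi(\hat x)\|\le L$. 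But $\|D\psi(\hat x)\|\ge L''-\delta>L$, a contradiction. Therefore $w(x_0)-w(y_0)\le L'\|x_0-y_0\|$ for every $L'>L$. Letting $L'\downarrow L$ and exchanging the roles of $x_0$ and $y_0$ shows that $w$ is $L$-Lipschitz.
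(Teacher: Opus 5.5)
Your proof is correct and follows essentially the same route as the paper: Stegall's principle applied to $w(\cdot)-w(y_0)-L''\|\cdot-y_0\|$ on a large closed ball around $y_0$, with a perturbation small enough that the maximizer is interior and different from $y_0$, and then a test with the cone-plus-linear function, whose gradient has norm at least $L''-\|p\|>L$.

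Two harmless points of bookkeeping. The extra slope $L''$ and the continuity argument near $y_0$ are redundant, since $\Phi(y_0)=0$ already rules out $\hat x=y_0$. As written, $\delta R<\eta/4$ and $R>2\|w\|_\infty/(L''-\delta)$ are circular, so fix $R$ first: take $R>\max\{\|x_0-y_0\|,\,4\|w\|_\infty/L''\}$, then $\delta<\min\{\eta/(4R),\,L''-L,\,L''/2\}$.
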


\begin{proof}
Suppose, to the contrary, that $w(x)-w(y)>L\|x-y\|$ for some \(x,y\in V\). Choose \(L'>L\) such that
\[
    w(x)-w(y)>L'\|x-y\|.
\]
Define
\[
    \Phi(z):=w(z)-w(y)-L'\|z-y\|, \qquad z \in V.
\]
Then $\Phi(x)>0$ and $\Phi(y)=0$. Since \(w\) is bounded, we may choose \(R>\|x-y\|\) and \(\sigma>0\) such that
\[
    \Phi(x)\geq 2\sigma \qquad \text{and}\qquad \sup_{\|z-y\|=R}\Phi(z)\leq -2\sigma.
\]

Apply Stegall's variational principle to \(\Phi\) on the closed ball \(\overline{B(y,R)}\). For every sufficiently small \(\eta>0\), there exists \(p_\eta\in V\) with $\|p_\eta\|<\eta$ such that
\begin{equation}\label{eq: s2_4}
    z\longmapsto
    \Phi(z)-\langle p_\eta,z-y\rangle
\end{equation}
attains its maximum on \(\overline {B(y,R)}\) at some point \(z_\eta\).
Choose \(\eta>0\) small enough that $\eta R<\sigma$. Then, for every \(z\in\partial B(y,R)\), we have
\[
\Phi(z)-\langle p_\eta,z-y\rangle \leq
-2\sigma+\|p_\eta\|R\ \le -\sigma.
\]
On the other hand,
\begin{equation}\label{eq: s2_5}
    \Phi(x)-\langle p_\eta,x-y\rangle \geq
    2\sigma-\|p_\eta\|\|x-y\|\ \ge \sigma.
\end{equation}
It follows that $z_\eta\in B(y,R)$. Moreover \eqref{eq: s2_5} implies that $z_\eta\neq y$ because the value of~\eqref{eq: s2_4} at \(y\) is zero.

Consequently, the function
\[
    \phi_\eta(z) :=
    w(y)+L'\|z-y\|
    +\langle p_\eta,z-y\rangle+C_\eta,
\]
where \(C_\eta\) is chosen so that \(\phi_\eta(z_\eta)=w(z_\eta)\), touches \(w\) from above at the interior point \(z_\eta\). Since \(z_\eta\neq y\), $\phi_\eta$ is differentiable at \(z_\eta\), and
\[
    D\phi_\eta(z_\eta) = 
    L'\frac{z_\eta-y}{\|z_\eta-y\|}
    +p_\eta, \qquad \|D\phi_\eta(z_\eta)\| \ge L'-\|p_\eta\|.
\]
Choosing \(\eta<L'-L\), we obtain
\[
    \|D\phi_\eta(z_\eta)\|>L,
\]
which contradicts the viscosity subsolution inequality. 
\end{proof}
We next record a time-slice property for viscosity subsolutions.

\begin{lem}\label{lem:time-slice}
Let $F:V\times V\to\mathbb R$ be continuous, and let $w\in\operatorname{BUC}(V\times[0,T])$ be a viscosity subsolution of
\[
    w_t+F(x,Dw)=0\qquad\text{in }V\times(0,T).
\]
Assume that $w$ is $C_t$-Lipschitz in time, uniformly in $x$.  Then, for every $t_0\in(0,T)$, the map $x\mapsto w(x,t_0)$ is a viscosity subsolution
of
\[
    F(x,Dw)\le 2C_t\qquad\text{in }V.
\]
\end{lem}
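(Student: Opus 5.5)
To prove Lemma~\ref{lem:time-slice} we argue by contradiction, converting a spatial test function at time $t_0$ into a space--time test function. Suppose $\psi\in C^1(V)$ and $w(\cdot,t_0)-\psi$ has a local maximum at $x_0$ with $F(x_0,D\psi(x_0))>2C_t$. Replacing $\psi$ by $\psi+\|x-x_0\|^2$ we may assume the maximum is strict in the quantitative sense
\[
w(x,t_0)-\psi(x)\le w(x_0,t_0)-\psi(x_0)-\|x-x_0\|^2 \quad\text{on } \overline{B(x_0,r)}.
\]
This does not change $D\psi(x_0)$. By continuity of $F$ and of $D\psi$, after shrinking $r$ we have $F(x,q)>2C_t+\delta$ whenever $\|x-x_0\|\le r$ and $\|q-D\psi(x_0)\|\le r$, for some $\delta>0$.

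We then penalize in time: for small $\alpha>0$ consider
\[
\Phi(x,t)=w(x,t)-\psi(x)-\frac{(t-t_0)^2}{2\alpha}-\|x-x_0\|^2
\]
on the closed set $\overline{B(x_0,r)}\times[t_0-\rho,t_0+\rho]\subset V\times(0,T)$. Since $V$ is not locally compact, a maximum need not exist. We therefore apply Stegall's variational principle on the product $V\times\mathbb R$, as noted at the start of Section~\ref{sec2}. This gives $(p,s)$ with $\|p\|+|s|<\eta$ such that $\Phi-\langle p,x\rangle-st$ attains its maximum at some $(x_\alpha,t_\alpha)$.

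The key estimates come from comparing with the value at $(x_0,t_0)$. The time-Lipschitz bound gives $w(x,t)\le w(x,t_0)+C_t|t-t_0|$, so
\[
\frac{(t_\alpha-t_0)^2}{2\alpha}+2\|x_\alpha-x_0\|^2\le C_t|t_\alpha-t_0|+O(\eta),
\]
using the strict maximum at time $t_0$. Hence $|t_\alpha-t_0|\le 2C_t\alpha+O(\sqrt{\alpha\eta})$ and $\|x_\alpha-x_0\|^2\lesssim C_t^2\alpha+\eta$. Choosing $\eta\ll\alpha$ and then $\alpha$ small places $(x_\alpha,t_\alpha)$ in the interior, with $x_\alpha\to x_0$.

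The subsolution inequality for the test function $\psi(x)+\frac{(t-t_0)^2}{2\alpha}+\|x-x_0\|^2+\langle p,x\rangle+st$ at the interior maximum reads
\[
\frac{t_\alpha-t_0}{\alpha}+s+F\bigl(x_\alpha,D\psi(x_\alpha)+2(x_\alpha-x_0)+p\bigr)\le 0.
\]
Since $\bigl|\frac{t_\alpha-t_0}{\alpha}\bigr|\le 2C_t+O(\sqrt{\eta/\alpha})$, we get $F(\cdots)\le 2C_t+o(1)$. For small parameters the momentum is within $r$ of $D\psi(x_0)$, contradicting $F>2C_t+\delta$.

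The main obstacle is bounding $|t_\alpha-t_0|/\alpha$ by roughly $2C_t$. This works only because comparison with $(x_0,t_0)$ is combined with the strict spatial maximum. The Stegall perturbation must be taken with $\eta=o(\alpha)$ so that it does not spoil this bound. The factor $2C_t$, rather than $C_t$, is exactly the slack the quadratic time penalization produces.
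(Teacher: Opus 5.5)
Your proposal is correct and follows the paper's route. Both use a quadratic time penalty, apply Stegall's principle on a bounded product neighbourhood in $V\times\mathbb R$ to get an interior maximizer, and use the time-Lipschitz bound to show that the penalty's time derivative is at most about $2C_t$, which is exactly where the factor $2$ comes from. The only differences are cosmetic. You argue by contradiction and compare the maximizer with $(x_0,t_0)$, which is why you need $\eta=o(\alpha)$. The paper argues directly and compares with $(x_a,t_0)$, so the perturbation enters only through $|k_a|$ and no such coupling is needed.
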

\begin{proof}
Let $\psi\in C^1(V)$ touch $w(\cdot,t_0)$ from above at $x_0$. Replacing $\psi$ by
$$
    \psi_\rho(x):=\psi(x)+\rho\|x-x_0\|^2
$$
for some $\rho>0$, we may assume that
$$
    w(x,t_0)-\psi_\rho(x)
    \leq
    w(x_0,t_0)-\psi_\rho(x_0)
    -\rho\|x-x_0\|^2
$$
on a closed ball $\overline {B(x_0,r)}$. Note that $D\psi_\rho(x_0)=D\psi(x_0)$.

Choose $\tau>0$ such that $[t_0-\tau,t_0+\tau]\subset(0,T)$ and $C_t\tau\leq \rho r^2 / 8$. Let
$$
    E:= \left\{
    (x,t):
    \frac{\|x-x_0\|^2}{r^2} +
    \frac{\|t-t_0\|^2}{\tau^2} \leq 1
    \right\}.
$$
For $a>0$, define
$$
    \Phi_a(x,t) :=
    w(x,t)-\psi_\rho(x)-a|t-t_0|^2.
$$
If $(x,t)\in\partial E$, then either $\|x-x_0\|\ge r/2$ or $|t-t_0|\ge \tau/2$. In the first case,
$$
    \Phi_a(x,t) \leq
    \Phi_a(x_0,t_0) - \rho r^2/4 + C_t\tau \leq
    \Phi_a(x_0,t_0)-\rho r^2/8.
$$
In the second case,
$$
    \Phi_a(x,t) \leq
    \Phi_a(x_0,t_0) +C_t\tau- a\tau^2 / 4.
$$
Hence, for all sufficiently large $a$, there exists $\sigma>0$ such that
\begin{equation}\label{eq:s2_6}
    \sup_{\partial E}\Phi_a
    \leq
    \Phi_a(x_0,t_0)-\sigma.
\end{equation}

By Stegall's variational principle, there exist
$$
    (p_a,k_a)\in V\times\mathbb R,
    \qquad \|p_a\|\le\frac{\sig}{8r(a+1)} , \qquad|k_a|\le\frac{\sig}{8\tau (a+1)} ,
$$
such that
$$
    (x,t)\longmapsto
    \Phi_a(x,t)
    -\langle p_a,x-x_0\rangle
    -k_a(t-t_0)
$$
attains its maximum on $E$ at some point $(x_a,t_a)$. Since the absolute value of the perturbation on $E$ is less than $\sigma/2$, \eqref{eq:s2_6} implies
$$
    (x_a,t_a)\in\operatorname{int}E.
$$

The viscosity subsolution inequality at $(x_a,t_a)$ gives
$$
    2a(t_a-t_0)+k_a +
    F\bigl(x_a,D\psi_\rho(x_a)+p_a\bigr) \leq0.
$$
Comparing the maximum value at $(x_a,t_a)$ with the value at $(x_a,t_0)$ yields
$$
    a|t_a-t_0|^2 \leq
    C_t|t_a-t_0| + |k_a||t_a-t_0|.
$$
Therefore,
$$
    a|t_a-t_0| \leq C_t+|k_a|,
$$
and hence $t_a\to t_0$ as $a\to\infty$. Similarly, maximality yields $ x_a\rightarrow x_0$ as $a \to \infty$.

Consequently,
$$
    F\bigl(x_a,D\psi_\rho(x_a)+p_a\bigr) \leq
    2a|t_a-t_0|+|k_a| \leq
    2C_t+3|k_a|.
$$
Letting $a\to\infty$ gives
$$
    F(x_0,D\psi(x_0))\leq2C_t.
$$
This proves the claim.
\end{proof}

We now prove the uniform Lipschitz regularity for $u^\e$.
\begin{prop}
\label{prop:uniform-lipschitz}
There exists $C>0$, independent of $\varepsilon$, such that
\[
|u^\varepsilon(x,t)-u^\varepsilon(y,s)|
\le C\bigl(\|x-y\|+|t-s|\bigr)
\]
for all $x,y\in V$ and $s,t\ge0$.
\end{prop}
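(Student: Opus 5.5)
The plan is to prove time regularity first, then use Lemma~\ref{lem:time-slice} and coercivity to get space regularity.

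\emph{Time Lipschitz bound.} Fix $h>0$. By uniqueness in Proposition~\ref{prop:wellposedness}(1), the function $(x,t)\mapsto u^\e(x,t+h)$ is the viscosity solution of \eqref{eq:s2_2} with initial datum $u^\e(\cdot,h)$; this is the semigroup property. By \eqref{eq:s2_3},
\[
\|u^\e(\cdot,h)-u_0\|_{L^\infty(V)}\le C_0h .
\]
Proposition~\ref{prop:wellposedness}(2) then gives
\[
\sup_{x\in V}|u^\e(x,t+h)-u^\e(x,t)|\le C_0h
\]
for all $t\ge0$. Hence $u^\e$ is $C_0$-Lipschitz in time, uniformly in $x$ and in $\e$. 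This step also uses that $u^\e$ on $[0,T]$ is independent of $T$, which again follows from uniqueness.

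\emph{Space Lipschitz bound.} Fix $t_0>0$. Apply Lemma~\ref{lem:time-slice} with $F(x,p)=H(x/\e,p)$, which is continuous, and $C_t=C_0$. It shows that $w:=u^\e(\cdot,t_0)\in\operatorname{BUC}(V)$ is a viscosity subsolution of
\[
H(x/\e,Dw)\le 2C_0 \qquad\text{in } V.
\]
By (H4) there is $L$, depending only on $H$ and $C_0$ and not on $\e$, such that
\[
\inf_{y\in V}H(y,p)>2C_0 \quad\text{whenever } \|p\|>L .
\]
So at any point where a $C^1$ test function $\psi$ touches $w$ from above, we must have $\|D\psi\|\le L$. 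Therefore $w$ is a viscosity subsolution of $\|Dw\|-L\le0$, and Lemma~\ref{lem:eikonal-lipschitz} gives that $u^\e(\cdot,t_0)$ is $L$-Lipschitz. At $t_0=0$ the same bound holds because $u_0$ is $R_0$-Lipschitz by (I2). Continuity of $u^\e$ extends the bound to all $t_0\ge0$ with constant $\max(L,R_0)$, though $t_0>0$ already covers everything needed.

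\emph{Combining.} For any $x,y$ and $s,t$,
\[
|u^\e(x,t)-u^\e(y,s)|\le |u^\e(x,t)-u^\e(y,t)|+|u^\e(y,t)-u^\e(y,s)|\le L\|x-y\|+C_0|t-s| .
\]
This gives the claim with $C=\max(L,R_0,C_0)$, independent of $\e$.

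The main subtlety is applying Lemma~\ref{lem:time-slice}, which is stated on a bounded interval $(0,T)$. I would apply it on $(0,T)$ with $T>t_0$ arbitrary, which is consistent since $u^\e$ is defined globally by uniqueness. One must also confirm that the time-Lipschitz bound holds on the whole interval and does not degrade near $t=0$; the semigroup/contraction argument above ensures this. The $\e$-independence of $L$ relies on (H4) being uniform in $x$, which removes the $x/\e$ dependence.
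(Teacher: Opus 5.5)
Your proposal is correct and follows essentially the same route as the paper: you obtain the $C_0$-Lipschitz bound in time from \eqref{eq:s2_3} via the semigroup property and the contraction estimate of Proposition~\ref{prop:wellposedness}. You then use Lemma~\ref{lem:time-slice} and the $x$-uniform coercivity (H4) to make each positive-time slice a subsolution of $\|Dw\|-L\le 0$, and conclude with Lemma~\ref{lem:eikonal-lipschitz}; your triangle-inequality combination and the remarks on $T$-independence and the $t=0$ slice just spell out details the paper leaves implicit.
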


\begin{proof}
By \eqref{eq:s2_3} and Proposition~\ref{prop:wellposedness}, for every $h,t\ge0$,
\begin{align*}
    \|u^\varepsilon(\cdot,t+h)-u^\varepsilon(\cdot,t)\|_\infty
    \le \|u^\varepsilon(\cdot,h)-u_0\|_\infty\le C_0h.
\end{align*}
Thus $u^\varepsilon$ is $C_0$-Lipschitz in time.  By Lemma~\ref{lem:time-slice}, every positive-time slice is a viscosity subsolution of
\[
    H(x/\varepsilon,Du^\varepsilon)\le 2C_0.
\]
Coercivity assumption \rm{(H4)} gives $L_0>0$, independent of $\varepsilon$, such that
\[
    H(x,p)>2C_0\qquad\text{whenever }\|p\|>L_0.
\]
Hence every time slice is a viscosity subsolution of
$$
    \|Du^\varepsilon\|-L_0\le0.
$$
Lemma~\ref{lem:eikonal-lipschitz} gives the uniform spatial Lipschitz estimate.  At $t=0$, the same conclusion follows from \rm{(I2)}.
\end{proof}

\subsection{Convergence of $u^\e $ as $\e  \rightarrow 0^+$}
We now exploit the symmetry of the problem to obtain compactness and to identify the effective variable. The key point is that the assumptions on the Hamiltonian and the initial data single out the mean configuration as the only relevant quantity in the limit. 

Let
\[
    Y := \left\{ y \in V \,;\,\mathfrak m(y) = 0\right\}.
\]
Then $Y$ is a closed subspace of $V$. The orthogonal decomposition
\[
    V = Y \oplus Y^\perp
\]
plays a central role in what follows. Indeed, it separates the mean-zero directions from the finite-dimensional space of constant configurations $Y^\perp$. This is precisely the decomposition that allows us to use both the symmetry of $H$ and the assumption that the initial data depend only on the mean configuration. The oscillatory problem is posed on the whole infinite-dimensional space $V$, but the symmetry forces the solution to become insensitive to the $Y$-directions, so that only the $Y^\perp$-component survives in the limit.

\begin{lem}\label{lem2.6}
$V = Y \oplus Y^\perp$ and $Y^\perp = \{c \chi_I \,;\, c \in \R^d \}$.
\end{lem}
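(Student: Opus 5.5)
The argument is elementary Hilbert-space linear algebra: I will show that $Y$ is the kernel of a bounded linear map, identify its orthogonal complement directly, and then apply the projection theorem.

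\emph{Step 1: $Y$ is closed.} The map $\mathfrak m:V\to\R^d$ is linear. By Cauchy--Schwarz on $I$, which has $\lambda_0(I)=1$, it is bounded: $|\mathfrak m(x)|\le\|x\|_{L^2}$. Hence $Y=\ker\mathfrak m$ is a closed subspace. The projection theorem in the Hilbert space $V$ then gives $V=Y\oplus Y^\perp$ at once.

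\emph{Step 2: constants lie in $Y^\perp$.} Fix $c\in\R^d$ and $y\in Y$. Then
\[
\langle c\chi_I,y\rangle=\int_I c\cdot y\,d\lambda_0=c\cdot\mathfrak m(y)=0 .
\]
So every $c\chi_I$ belongs to $Y^\perp$.

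\emph{Step 3: $Y^\perp$ contains only constants.} Take $w\in Y^\perp$ and set $c:=\mathfrak m(w)$. The function $w-c\chi_I$ has mean zero, so it lies in $Y$. By Step~2 it is also orthogonal to $c\chi_I$, and since $w\in Y^\perp$ it is orthogonal to $w$. Therefore
\[
\|w-c\chi_I\|^2=\langle w-c\chi_I,\,w\rangle-\langle w-c\chi_I,\,c\chi_I\rangle=0,
\]
which gives $w=c\chi_I$ almost everywhere.

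As a byproduct, the orthogonal projection onto $Y^\perp$ is exactly $x\mapsto Mx=\iota(\mathfrak m(x))$. Indeed, $x-Mx\in Y$ and $Mx\in Y^\perp$. This identification is worth recording for the later use of the decomposition.

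No step is a real obstacle. The only points needing care are the boundedness of $\mathfrak m$, which gives closedness, and the measure-theoretic "a.e." in the identification of $Y^\perp$.
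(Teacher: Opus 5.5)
Your proof is correct and follows the paper's argument: the paper also notes that constants are orthogonal to $Y$ and then, for $f\in Y^\perp$, uses $f-Mf\in Y$ together with $f,Mf\in Y^\perp$ to conclude $\|f-Mf\|^2=0$. Your explicit check that $\mathfrak m$ is bounded, so that $Y$ is closed and the projection theorem gives $V=Y\oplus Y^\perp$, fills in a point the paper only asserts just before the lemma.
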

\begin{proof}
    Let
    \[
        K:=\{c\chi_I : c\in \R^d\}.
    \]
    It is immediate that $K\subset Y^\perp$, because $\langle c\chi_I,y\rangle = 0$ for any $y\in Y$ and $c \in\R^d$.
    
    Conversely, let $f\in Y^\perp$. Write
    \[
    f = (f-Mf)+Mf.
    \]
    Since $\int_I (f-Mf)\,d\lambda_0=0$, we have $f-Mf\in Y$. Because $f, Mf\in Y^\perp$,
    \[
    0=\langle f,f-Mf\rangle
    =\|f-Mf\|^2.
    \]
    Hence $f=Mf\in K$. Therefore $Y^\perp=K$.
\end{proof}
For each $\e  > 0$, we next introduce the additive subgroup of $Y$
\[
    Y_\e  := \{ \e  z \,;\, z \in \Lambda \cap Y \}.
\]
Later, we will show that the periodicity of the Hamiltonian implies the invariance of $u^\e$ under translations by elements of $Y_\e $. The next decomposition makes this invariance effective. It shows that every configuration $x \in V$ can be written as the sum of three parts: its mean component $Mx \in Y^\perp$, an $\e$-grid-valued mean-zero part in $Y_\e $, and a remainder in $Y$ whose size is $O(\e )$. In this way, the $Y_\e $-invariance can be combined with the Lipschitz regularity of $u^\e $ to show that $u^\e (x,t)$ is close to $u^\e (Mx,t)$, uniformly in $(x,t)$.

\begin{lem}\label{lem:epsilon-decomposition}
For every $x\in V$ and every $\e >0$, there exist $x_\e \in Y$ and $z_x\in \Lambda\cap Y$ such that
\[
x = x_\e  + \e  z_x + Mx,
\qquad
\|x_\e \|_{L^2}\le C\e ,
\]
where $C>0$ depends only on the dimension $d$.
\end{lem}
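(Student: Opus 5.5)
Set $y := x - Mx \in Y$, so that $\mathfrak m(y)=0$. Rescale and work with $w := y/\e \in Y$; the goal is to find $z\in\Lambda\cap Y$ with $\|w - z\|_{L^2}\le C$, since then $x_\e := \e(w-z) = y - \e z$ lies in $Y$ and satisfies $\|x_\e\|\le C\e$. The plan is to round $w$ pointwise to the integer lattice and then correct the rounding so that the result has mean zero, paying at most a bounded $L^2$ cost.

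\textbf{Pointwise rounding.} Write $w=(w^1,\dots,w^d)$ and treat each coordinate separately, so it suffices to handle $d=1$ and sum the squared errors. For scalar $w\in L^2(I)$ with $\int_I w\,d\lambda_0=0$, the first candidate is $\lfloor w\rfloor$, which is Borel and $\Z$-valued. Its fractional part $f:=w-\lfloor w\rfloor\in[0,1)$ satisfies $\|f\|_{L^\infty}\le1$. However, $\int_I\lfloor w\rfloor\,d\lambda_0 = -\int_I f\,d\lambda_0 =: -\theta$ with $\theta\in[0,1)$, so $\lfloor w\rfloor$ need not have zero mean. This is the main obstacle: an integer-valued function has mean zero only if the rounding is chosen compatibly, and a single constant shift cannot fix it because $\theta$ need not be an integer.

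\textbf{Fixing the mean.} Use the nonatomic structure of $\lambda_0$. Round up on a set $A$ and down elsewhere, that is, set $z:=\lfloor w\rfloor+\chi_A$. Then $\int_I z\,d\lambda_0 = -\theta+\lambda_0(A)$, so we need $\lambda_0(A)=\theta$. By continuity of $s\mapsto\lambda_0(\{i: i_1\le s\})$ (Lyapunov/Sierpiński, or just a coordinate slab), choose $A=\{i\in I: i_1\le\theta\}$, which is Borel with $\lambda_0(A)=\theta$. Then $z\in L^2(I;\Z)$ has mean zero, and $|w-z| = |f-\chi_A|\le1$ pointwise. Hence $\|w-z\|_{L^2}\le1$, and $z\in L^2$ because $w\in L^2$.

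\textbf{Conclusion.} Doing this in each coordinate gives $z_x\in\Lambda\cap Y$ with $\|w-z_x\|_{L^2}\le\sqrt d$. Setting $x_\e := x-Mx-\e z_x = \e(w-z_x)\in Y$ yields $x=x_\e+\e z_x+Mx$ with $\|x_\e\|_{L^2}\le\sqrt d\,\e$, so the constant $C=\sqrt d$ depends only on $d$. Membership $x_\e\in Y$ holds because both $y$ and $z_x$ have zero mean.
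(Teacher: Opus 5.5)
Your proof is correct and follows essentially the paper's route: round $\varepsilon^{-1}(x-Mx)$ pointwise to $\mathbb{Z}^d$, then restore the zero mean by adding an indicator of a set of prescribed Lebesgue measure. The differences are cosmetic. You use the floor and round up on a set of measure $\theta$, which gives $|w-z_x|\le 1$ pointwise and $C=\sqrt d$ directly; the paper uses nearest-integer rounding, subtracts $\operatorname{sgn}(m_j)\mathbf 1_{A_j}$ with $\lambda_0(A_j)=|m_j|$, and bounds the error by the triangle inequality.
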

\begin{proof} 
Write
\[
    x=P_Yx+Mx,
\]
where \(P_Y\) is the orthogonal projection onto \(Y\). Choose a measurable nearest-integer map \(\mathcal N:\R^d\to\mathbb Z^d\), with a fixed convention at ties, and define
\[
    z_0(i):= \mathcal N(\e ^{-1}P_Yx(i))
\]
for every $i \in I$. Then \(z_0\in\Lambda\) and
\[
    \|P_Yx-\e  z_0\|_{L^2}\le \e\sqrt d/2 .
\]
Set
\[
    m = (m_1,\dots ,m_d):= \mathfrak m(z_0).
\]
Note that \(m \in [-1/2,1/2]^d\). For each \(j\), choose a measurable set \(A_j\subset I\) such that
\[
    \lambda_0(A_j)=|m_j|, \qquad j = 1,\dots ,d.
\]
Define
\[
    \eta_j:=\operatorname{sgn}(m_j)\mathbf 1_{A_j},
    \qquad
    \eta:=(\eta_1,\dots,\eta_d).
\]
Then \(\eta\in\Lambda\) satisfies
\[
    \mathfrak m (\eta) = m, \qquad \text{and} \qquad \|\eta\|_{L^2}^2 \le d/2.
\]
Now set
\[
    z_x:=z_0-\eta\in \Lambda\cap Y,
    \qquad
    x_\e :=P_Yx-\e  z_x.
\]
Then
\[
    x=x_\e +\e  z_x+Mx
\]
and
\[
    \|x_\e \|_{L^2}
    \le
    \|P_Yx-\e  z_0\|_{L^2}
    +
    \e \|\eta\|_{L^2}
    \le C\e ,
\]
where \(C\) depends only on \(d\).
\end{proof}

This decomposition is the key structural step in the argument. In the spatial variable of $u^\e$, the term $\e  z_x$ can be removed using the $Y_\e $-invariance, while the remaining error $x_\e $ is small in $L^2$ and therefore contributes only an $O(\e )$ error by the equi-Lipschitz estimate established earlier. Consequently, the solution $u^\e $ is uniformly close to a function depending only on the mean variable $Mx$. We now show that this mechanism indeed yields the required invariance and reduction.

\begin{prop}\label{prop:compactness}
For every $\e >0$, the solution $u^\e $ of $\mathrm{(CP)_\e }$ is $Y_\e $-invariant, that is,
\[
u^\e (x+\e  z,t)=u^\e (x,t)
\qquad
\text{for all } z\in \Lambda\cap Y \text{ and } (x,t)\in V\times [0,\infty).
\]
Moreover, there exists a constant $C>0$, independent of $\e $, such that
\[
|u^\e (x,t)-u^\e (Mx,t)|\le C\e 
\qquad
\text{for all } (x,t)\in V\times [0,\infty).
\]
Consequently, up to a subsequence,
\[
    \sup_{\|x\|\le R,\ 0\le t\le T}
    |u^\varepsilon(x,t)-u(x,t)|\to0
\]
for every \(R,T>0\). Also, the limit $u(x,t)$ depends only on $Mx$ and time.
\end{prop}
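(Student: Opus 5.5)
The plan has three steps: prove $Y_\e$-invariance by uniqueness, deduce the $O(\e)$ reduction to the mean variable, and then obtain compactness on the finite-dimensional set of constant configurations.

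\textbf{Step 1: $Y_\e$-invariance.} Fix $z\in\Lambda\cap Y$ and set $w(x,t):=u^\e(x+\e z,t)$. By (H1), $H((x+\e z)/\e,p)=H(x/\e+z,p)=H(x/\e,p)$. Since translation preserves Fr\'echet derivatives and the class of $C^1$ test functions, $w$ is a viscosity solution of the same equation $\mathrm{(CP)_\e}$. For the initial datum, (I1) gives $u_0(x+\e z)=u_0(M(x+\e z))=u_0(Mx)=u_0(x)$, because $\mathfrak m(z)=0$. Also $w\in\operatorname{BUC}$. Uniqueness in Proposition~\ref{prop:wellposedness} (equivalently, the contraction estimate in part (2)) then yields $w=u^\e$ on every $V\times[0,T]$, hence for all $t\ge0$.

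\textbf{Step 2: the $O(\e)$ reduction.} Given $x$, apply Lemma~\ref{lem:epsilon-decomposition} to write $x=x_\e+\e z_x+Mx$ with $z_x\in\Lambda\cap Y$ and $\|x_\e\|\le C\e$. Step 1 gives $u^\e(x,t)=u^\e(x_\e+Mx,t)$. The $\e$-independent Lipschitz bound of Proposition~\ref{prop:uniform-lipschitz} then gives $|u^\e(x,t)-u^\e(Mx,t)|\le C\|x_\e\|\le C'\e$, uniformly in $(x,t)$.

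\textbf{Step 3: compactness on the mean variable.} This is the main point, since Arzel\`a--Ascoli fails on bounded sets of $V$. Consider $v^\e(q,t):=u^\e(\iota(q),t)$ on $\R^d\times[0,\infty)$. We have $\|\iota(q)-\iota(q')\|=|q-q'|$, so the $v^\e$ are equi-Lipschitz. They are locally bounded by \eqref{eq:s2_3}, since $u_0$ is bounded. Arzel\`a--Ascoli in finite dimensions together with a diagonal argument gives a subsequence with $v^\e\to\widetilde u$ locally uniformly on $\R^d\times[0,\infty)$. Define $u(x,t):=\widetilde u(\mathfrak m(x),t)$, which depends only on $Mx$ and $t$. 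For $\|x\|\le R$ we have $|\mathfrak m(x)|\le\|x\|\le R$ by Cauchy--Schwarz on $I$, which has measure one. Therefore
\[
|u^\e(x,t)-u(x,t)|\le |u^\e(x,t)-u^\e(Mx,t)|+|v^\e(\mathfrak m(x),t)-\widetilde u(\mathfrak m(x),t)|\le C\e+\sup_{|q|\le R,\,t\le T}|v^\e-\widetilde u|\to0.
\]
This is the claimed uniform convergence on bounded sets.

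The only delicate point is verifying in Step 1 that translation preserves viscosity solutions in the infinite-dimensional Crandall--Lions framework. This is routine because sub- and superdifferentials transform covariantly under translation.
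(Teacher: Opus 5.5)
Your proposal is correct and follows essentially the same route as the paper. It proves $Y_\e$-invariance from (H1), (I1) and uniqueness. It gets the $O(\e)$ reduction from Lemma~\ref{lem:epsilon-decomposition} and the $\e$-uniform Lipschitz bound of Proposition~\ref{prop:uniform-lipschitz}. It then applies Arzel\`a--Ascoli on the constant configurations $Y^\perp\cong\R^d$ and extends the limit through the mean. You also make explicit a few details the paper leaves implicit: local equiboundedness from \eqref{eq:s2_3}, the diagonal argument, and the bound $|\mathfrak m(x)|\le\|x\|$.
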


\begin{proof}
    We first show that $u^\e $ is $Y_\e $-invariant. Fix $z\in \Lambda\cap Y$. Consider
    \[
        v^\e (x,t):=u^\e (x+\e  z,t).
    \]
    Then
    \[
        v^\e _t(x,t)+H\!\left(\frac{x+\e  z}{\e },Dv^\e (x,t)\right)=0.
    \]
    By $\Lambda$-periodicity of $H$,
    \[
    H\!\left(\frac{x+\e  z}{\e },p\right)=H\!\left(\frac{x}{\e }+z,p\right)=H\!\left(\frac{x}{\e },p\right).
    \]
    Also, since $z\in Y$, we have $M(x+\e  z)=Mx$, and therefore, by assumption $\textrm{(I1)}$,
    \[
    u_0(x+\e  z)=u_0(x).
    \]
    Hence $v^\e $ is a viscosity solution of $\mathrm{(CP)_\e }$. By uniqueness,
    \[
        u^\e (x+\e  z,t)=u^\e (x,t),
    \]
    which proves the $Y_\e $-invariance of $u^\e$.
    
    Next, let $x\in V$. By Lemma~\ref{lem:epsilon-decomposition}, we may write
    \[
        x=x_\e +\e  z_x +Mx
    \]
    with $z_x\in \Lambda\cap Y$ and $\|x_\e \|_{L^2}\le C\e $. Using the $Y_\e $-invariance and Proposition~\ref{prop:uniform-lipschitz},
    \[
    |u^\e (x,t)-u^\e (Mx,t)|
    =
    |u^\e (x_\e +Mx,t)-u^\e (Mx,t)|
    \le C\|x_\e \|_{L^2}
    \le C\e .
    \]
    Thus $u^\e $ is uniformly $O(\e )$-close to its restriction to $Y^\perp\cong \R^d$.
    
    Since the family $\{u^\e \}_{\e >0}$ is equi-Lipschitz, the restrictions $u^\e |_{Y^\perp\times [0,\infty)}$
    form an equibounded and equicontinuous family on each compact subset of $Y^\perp \times [0,\infty)$. By the Arzel\`a--Ascoli theorem,
    along a subsequence,
    \[
    u^\e |_{Y^\perp\times [0,\infty)} \to u
    \quad\text{as } \e  \to 0^+ \text{ locally uniformly on } Y^\perp \times [0,\infty),
    \]
    for some continuous function $u$ on $Y^\perp \times [0,\infty)$. We then extend $u$ to \(V\times[0,\infty)\) by  
    \[
    u(x,t)=u(Mx,t) \quad \text{for every }(x,t) \in V \times [0,\infty).
    \]
    The estimate above implies that $u^\e  \to u$ uniformly on bounded subsets of \(V\times[0,\infty)\) and the limit depends on the spatial variable only through the mean.
\end{proof}

\begin{cor} Let \(u\) be a locally uniform subsequential limit of \(\{u^\varepsilon\}_{\varepsilon>0}\). Then there exists $\widetilde u:\mathbb R^d\times[0,\infty)\to\mathbb R$ such that
\[
u(x,t)=\widetilde  u(\mathfrak m(x),t).
\]
Moreover, if \(C\) is the constant in Proposition~\ref{prop:compactness}, then
\[
|\widetilde u(q,t) - \widetilde u(r,s)|
\leq
C\bigl(|q-r|+|t-s|\bigr)
\]
for every \(q,r\in\mathbb R^d\) and \(s,t\geq0\).
\end{cor}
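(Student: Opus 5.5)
Define $\widetilde u(q,t):=u(\iota(q),t)=u(q\chi_I,t)$ for $q\in\mathbb R^d$ and $t\ge0$. Since $Mx=\iota(\mathfrak m(x))$, Proposition~\ref{prop:compactness} gives $u(x,t)=u(Mx,t)=u(\iota(\mathfrak m(x)),t)=\widetilde u(\mathfrak m(x),t)$ for all $(x,t)\in V\times[0,\infty)$, which is the first claim. Strictly speaking, Proposition~\ref{prop:compactness} constructs one particular subsequential limit. For an arbitrary locally uniform subsequential limit $u$, the same identity follows by letting $\varepsilon\to0$ along the subsequence in the bound $|u^\varepsilon(x,t)-u^\varepsilon(Mx,t)|\le C\varepsilon$. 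Both $x$ and $Mx$ lie in a bounded set once $x$ does, because $\|Mx\|\le\|x\|$ ($M$ is an orthogonal projection by Lemma~\ref{lem2.6}). Hence $u(x,t)=u(Mx,t)$ for every such limit.

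For the Lipschitz estimate, I will pass to the limit in Proposition~\ref{prop:uniform-lipschitz}. For $q,r\in\mathbb R^d$ and $s,t\ge0$, the points $\iota(q),\iota(r)$ lie in a bounded subset of $V$, so the convergence there is uniform and
\[
|\widetilde u(q,t)-\widetilde u(r,s)|=\lim_{\varepsilon\to0}|u^\varepsilon(\iota(q),t)-u^\varepsilon(\iota(r),s)|\le C\bigl(\|\iota(q)-\iota(r)\|_{L^2}+|t-s|\bigr).
\]
Since $\lambda_0(I)=1$, we have $\|\iota(q)-\iota(r)\|_{L^2}=\|(q-r)\chi_I\|_{L^2}=|q-r|$, which gives the stated bound.

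The only point needing care is which constant $C$ appears. The Lipschitz constant of Proposition~\ref{prop:uniform-lipschitz} and the $O(\varepsilon)$ constant in Proposition~\ref{prop:compactness} differ in principle. I will take $C$ to be the maximum of the two, which is consistent with how the paper reuses $C$ in the proof of Proposition~\ref{prop:compactness}. Beyond this, there is no real obstacle: the argument is a direct passage to the limit.
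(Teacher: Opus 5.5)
Your proposal is correct and follows the paper. The paper gives no separate proof of this corollary: it is read off from the construction in Proposition~\ref{prop:compactness}, where $u$ is the limit of the equi-Lipschitz restrictions to $Y^\perp\times[0,\infty)$, extended by $u(x,t)=u(Mx,t)$, so $\widetilde u=u(\iota(\cdot),\cdot)$ inherits the bound through $\|\iota(q)-\iota(r)\|_{L^2}=|q-r|$. Your extra step showing that every subsequential limit satisfies $u(x,t)=u(Mx,t)$, and your reading of the loosely reused constant $C$ as the larger of the two constants, are both sound.
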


Proposition~\ref{prop:compactness} is the main consequence of the symmetry structure developed in this section. Although $\mathrm{(CP)_\e }$ is posed on the infinite-dimensional Hilbert space $V$, the structural symmetries and the assumption on $u_0$ force the solution to be uniformly $O(\e )$-close to its restriction to the mean-configuration space $Y^\perp  \cong \mathbb{R}^d$. This explains why the homogenized problem is ultimately governed by finite-dimensional effective dynamics.

Note that Proposition ~\ref{prop:compactness} yields subsequential convergence. Once the effective equation is identified and uniqueness is established, the full convergence of the whole family $\{u^\e \}_{\e >0}$ follows.

\section{The cell problem and the effective equation} \label{sec3}

In this section, we identify the effective Hamiltonian through an associated cell problem and then prove that the locally uniform limit of $u^\e $ satisfies the effective equation. The overall strategy is analogous to the periodic homogenization theory in finite dimensions: one first solves the cell problem for each macroscopic momentum, then defines the effective Hamiltonian from the corresponding ergodic constant, and finally applies a perturbed test function argument in the spirit of Evans \cite{Evans-perturb}. The main additional difficulty in the present infinite-dimensional setting is that the microscopic variable lives in $V$, where local compactness is no longer available. In particular, when carrying out the perturbed test function argument, one cannot rely on the usual finite-dimensional maximization procedure. Instead, we use the Radon--Nikodym property through Stegall's theorem to produce approximate maximizers. At the same time, to recover compactness on the microscopic side, we pass from $V$ to the compact quotient space $SS^d$. Thus, both the Radon--Nikodym property and the compactness of $SS^d$ play essential roles in the analysis below.

\subsection{The cell problem}
We first explain why the cell problem is naturally parameterized only by momenta in \(Y^\perp\). Recall from Proposition ~\ref{prop:compactness} that every subsequential limit $u$ of $u^\e $ depends only on the mean variable $Mx$. Hence, if a test function touches the limit from above or below, its spatial derivative vanishes in the mean-zero directions \(Y\), and therefore belongs to \(Y^\perp\). This restriction is also consistent with the geometric interpretation of the cell problem in the weak KAM formulation. On the infinite-dimensional symmetric torus, $V/\Lam/\mathcal{G}$, closed one-forms are represented by
\[
    DU + p\chi_I, \qquad p\in\R^d,
\]
where \(DU\) is the exact part and \(p\chi_I\in Y^\perp\) is the constant representative of the non-exact part~\cite{Gomes-Nurbekyan}. Consequently, the macroscopic momentum parameter in the cell problem should be
taken from \(Y^\perp\).
 
For $p \in \R^d$, the associated cell problem is 
\begin{equation}\label{eq:cell-problem}
    H(y, p \chi_I + Dv(y)) = c \quad\text{in }V,
\end{equation}
where $c\in\R$ is an unknown constant. As in the finite-dimensional theory, the role of the cell problem is to encode the microscopic oscillation of the Hamiltonian and to produce the effective Hamiltonian through the corresponding ergodic constant.

We first record a useful chain rule in our setting.

\begin{lem} \label{lem:compose-derivative}
Let $f: V \to \R$ be Fr\'echet differentiable and let $g \in \mathcal{G}$. Then
\begin{equation*}
    D_y[f(y \circ g)] = (D_y f)(y \circ g) \circ g^{-1}, \qquad y\in V.
\end{equation*}
\end{lem}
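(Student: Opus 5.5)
The plan is to view the map $y\mapsto y\circ g$ as a linear operator $T_g:V\to V$, $T_g y:=y\circ g$, show that $T_g$ is a linear isometry (indeed unitary) of $V$ with adjoint $T_g^*=T_{g^{-1}}$, and then apply the ordinary chain rule for Fréchet derivatives to the composition $f\circ T_g$.

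\emph{Step 1.} We check that $T_g$ is well defined on equivalence classes and isometric. Since $g_\#\lambda_0=\lambda_0$, the change of variables formula gives
\[
\|y\circ g\|_{L^2}^2=\int_I |y(g(i))|^2\,d\lambda_0(i)=\int_I |y(j)|^2\,d(g_\#\lambda_0)(j)=\|y\|_{L^2}^2 .
\]
Applied to $y-\tilde y$, this shows that $y=\tilde y$ a.e.\ implies $y\circ g=\tilde y\circ g$ a.e. Borel measurability of $g$ makes $y\circ g$ measurable. Linearity is clear. Hence $T_g$ is a bounded linear operator, in particular Fréchet differentiable with $DT_g=T_g$. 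The same holds for $g^{-1}$, and since $T_gT_{g^{-1}}=T_{g^{-1}}T_g=\mathrm{Id}$, $T_g$ is unitary.

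\emph{Step 2.} We identify the adjoint. For $p,h\in V$, change variables with $j=g(i)$:
\[
\langle p,h\circ g\rangle=\int_I p(i)\cdot h(g(i))\,d\lambda_0(i)=\int_I p(g^{-1}(j))\cdot h(j)\,d\lambda_0(j)=\langle p\circ g^{-1},h\rangle .
\]
Here we use $g_\#\lambda_0=\lambda_0$ and the bijectivity of $g$. Thus $T_g^*=T_{g^{-1}}$.

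\emph{Step 3.} We apply the chain rule. For $h\in V$, differentiability of $f$ at $T_gy$ together with the isometry property gives
\[
f(T_g(y+h))-f(T_gy)=\langle (Df)(T_gy),T_gh\rangle+o(\|T_gh\|)=\langle (Df)(T_gy)\circ g^{-1},h\rangle+o(\|h\|).
\]
Under the Riesz identification, this means $D_y[f(y\circ g)]=(D_yf)(y\circ g)\circ g^{-1}$, which is the claim.

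None of these steps is a real obstacle. The only point needing care is Step 2, the measure-theoretic change of variables, where both $g_\#\lambda_0=\lambda_0$ and the bijectivity and measurability of $g^{-1}$ are used. The formula is understood in the $\lambda_0$-a.e.\ sense.
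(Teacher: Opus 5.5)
Your proposal is correct and takes the same approach as the paper. The paper's proof is exactly your Step 3: expand $f(y\circ g+h\circ g)-f(y\circ g)$ using differentiability at $y\circ g$, then move the composition across the inner product. Your Steps 1--2 (that $T_g$ is an isometry and that $T_g^*=T_{g^{-1}}$) simply spell out what the paper compresses into the remark that $g$ preserves Lebesgue measure.
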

\begin{proof}
    Define $F:V\to\R$ by $F(y):=f(y\circ g)$. Then for $h\in V$,
    \begin{align*}
        F(y+h)-F(y)&=f(y\circ g + h\circ g)-f(y\circ g)\\
        &=\langle Df(y\circ g),\, h\circ g\rangle + o(\|h\circ g\|)\\
        &=\langle (Df)(y\circ g)\circ g^{-1},\, h\rangle + o(\|h\|).
    \end{align*}
    The last equality holds because $g$ preserves the Lebesgue measure. 
\end{proof}
The next proposition constructs the ergodic constant by the standard vanishing-discount approximation and defines the effective Hamiltonian.

\begin{prop}\label{prop:effective-H}
For every \(p\in\R^d\), there exists a unique constant
\(\overline H(p)\in\R\) such that the cell problem
\begin{equation}\label{eq:cellprob}
    H(y,p\chi_I +Dv(y))=\overline H(p)
    \qquad\text{in }V
\end{equation}
admits a viscosity solution $v\in\Lip(V)$ that is
$\Lambda$-periodic and $G$-invariant. 
\end{prop}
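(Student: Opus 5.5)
The plan is to use the vanishing-discount approximation built on Proposition~\ref{prop:wellposedness}(3). For $\lambda>0$, let $v_\lambda\in\operatorname{BUC}(V)$ be the unique solution of
\[
\lambda v_\lambda+H(y,p\chi_I+Dv_\lambda)=0\qquad\text{in }V.
\]

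\textbf{Step 1: symmetries and a uniform sup bound.} By (H1), $y\mapsto v_\lambda(y+z)$ solves the same equation for each $z\in\Lambda$. By (H2) together with Lemma~\ref{lem:compose-derivative}, $y\mapsto v_\lambda(y\circ g)$ also solves it for each $g\in\mathcal G$; here we use $(p\chi_I)\circ g^{-1}=p\chi_I$. Uniqueness then gives that $v_\lambda$ is $\Lambda$-periodic and $\mathcal G$-invariant.

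Comparison with the constants $\pm C_{|p|}/\lambda$, where $C_{|p|}$ comes from Lemma~\ref{lem:H-bounded-on-bounded-p}, gives
\[
\|\lambda v_\lambda\|_\infty\le C_{|p|}.
\]

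\textbf{Step 2: uniform Lipschitz bound.} Since $v_\lambda$ is a subsolution of $H(y,p\chi_I+Dv_\lambda)\le C_{|p|}$, coercivity (H4) yields $L_p$, independent of $\lambda$, with $H(y,q)>C_{|p|}$ whenever $\|q\|>L_p$. Hence $v_\lambda$ is a viscosity subsolution of
\[
\|Dv_\lambda\|\le L_p+|p|.
\]
Lemma~\ref{lem:eikonal-lipschitz} then makes $v_\lambda$ Lipschitz with constant $L_p+|p|$, uniformly in $\lambda$.

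\textbf{Step 3: compactness via $SS^d$.} This is the step I expect to be the main obstacle. The invariances of Step 1, together with the Lipschitz bound, give
\[
|v_\lambda(x)-v_\lambda(y)|\le (L_p+|p|)\,d_{SS^d}(x,y).
\]
One takes the infimum over $g$ and $z$, noting that the infimum defining $d_{SS^d}$ is not attained but the Lipschitz bound passes to it. So $v_\lambda$ descends to a Lipschitz function on the compact metric space $SS^d$. In particular its oscillation is bounded by $(L_p+|p|)\operatorname{diam}(SS^d)$, uniformly in $\lambda$.

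Set $w_\lambda:=v_\lambda-v_\lambda(0)$. The family $\{w_\lambda\}$ is then equibounded and equi-Lipschitz on $SS^d$. By Arzel\`a--Ascoli on $SS^d$, along a sequence $\lambda_j\to0$ we get $\lambda_jv_{\lambda_j}(0)\to -c$ and $w_{\lambda_j}\to v$ uniformly on $SS^d$, hence uniformly on all of $V$ after pulling back. The limit $v$ is Lipschitz, $\Lambda$-periodic and $\mathcal G$-invariant. Moreover
\[
\lambda_j v_{\lambda_j}=\lambda_j w_{\lambda_j}+\lambda_j v_{\lambda_j}(0)\to -c
\]
uniformly, since $w_{\lambda_j}$ is bounded.

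Uniform convergence in $V$ is what makes this work. Stability of viscosity solutions under uniform convergence in a Radon--Nikodym space (perturbing test functions via Stegall's principle, as in Lemma~\ref{lem:time-slice}) then shows that $v$ solves
\[
H(y,p\chi_I+Dv)=c\qquad\text{in }V.
\]
I would check this stability argument carefully, since local compactness is unavailable. This defines $\overline H(p):=c$.

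\textbf{Step 4: uniqueness of the constant.} Suppose $v_1,v_2$ are bounded Lipschitz solutions with constants $c_1<c_2$. Both are bounded, since periodic and invariant solutions have bounded oscillation by Step 3. Choose $\delta>0$ small with $\delta(\|v_1\|_\infty+\|v_2\|_\infty)<(c_2-c_1)/2$. Then $v_1$ is a subsolution and $v_2$ a supersolution of
\[
\delta w+H(y,p\chi_I+Dw)=\tfrac{c_1+c_2}{2}
\]
after adding suitable constants: shift $v_1$ by $-M$ and $v_2$ by $+M$ with $M$ large. Comparison from Proposition~\ref{prop:wellposedness} then yields $v_1-M\le v_2+M$, which is no contradiction by itself. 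So instead I would compare directly: $v_1-\|v_1\|_\infty-\|v_2\|_\infty$ is a subsolution of the discounted problem with right side $c_1+\delta C$, and $v_2$ is a supersolution with right side $c_2-\delta C$.

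The cleanest route is to note that any such solution forces $c=-\lim\lambda v_\lambda$. Indeed, $v\pm\|v\|_\infty\pm c/\lambda$ are a sub- and supersolution of the $\lambda$-discounted problem. Comparison gives
\[
|\lambda v_\lambda+c|\le 2\lambda\|v\|_\infty\to0.
\]
So $c$ is determined by the full family $\lambda v_\lambda$, which proves uniqueness and also shows that the whole family converges.
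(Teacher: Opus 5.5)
Your proposal is correct and follows the paper's proof essentially step for step: vanishing discount, $\Lambda$-periodicity and $\mathcal G$-invariance of $v_\lambda$ by uniqueness, the uniform Lipschitz bound from coercivity and Lemma~\ref{lem:eikonal-lipschitz}, descent to the compact space $SS^d$ with Arzel\`a--Ascoli, and stability. The only difference is in the uniqueness of the constant. The paper compares two correctors through a $\delta$-discounted equation, shifting $v_1$ upward by an arbitrary constant and shrinking $\delta$; that upward shift is what your abandoned first attempt was missing. Your alternative, comparing a corrector with $v_\lambda$ to get $|\lambda v_\lambda+c|\le 2\lambda\|v\|_\infty$, is equally valid. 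It also gives the full-family convergence with an explicit rate, and it is the same comparison the paper later uses for \eqref{eq:vlambda-ergodic-sec4}.
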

\begin{proof}
Fix \(p\in \R^d\). For each \(\lambda>0\), we consider the discounted cell problem
\begin{equation} \label{eq:discount-cell}
    \lam v^\lambda(y)+H\bigl(y,p\chi_I + Dv^\lambda(y)\bigr)=0
    \quad\text{in } V .
\end{equation}
Note that this equation has a unique viscosity solution \(v^\lambda\). The idea is that, as \(\lambda\to0^+\), a normalized limit of \(v^\lambda\) will produce a corrector \(v\), while the limit of \(-\lambda v^\lambda\) will produce the ergodic constant. We now carry this out in several steps.

\medskip
\noindent
\textbf{Step 1. $\Lambda$-periodicity and $G$-invariance.}
Let $z\in\Lambda$, and define
\[
    w(y):=v^\lambda(y+z).
\]
By $\mathrm{(H1)}$,
\[
H\bigl(y+z,p\chi_I + Dw(y)\bigr)=H\bigl(y,p\chi_I + Dw(y)\bigr).
\]
Thus, $w$ solves the same discounted problem as $v^\lambda$. By uniqueness,
\[
v^\lambda(y+z)=v^\lambda(y)
\qquad\text{for all }y\in V \text{ and } z\in\Lambda.
\]

Next let $g\in \mathcal{G}$, and define
\[
    \widetilde w(y):=v^\lambda(y\circ g).
\]
Suppose that $\psi\in C^1(V)$ touches $\widetilde w$ from above at $y_0$. Define
$$
\phi :=\psi \circ g^{-1}.
$$
Then $\phi$ touches $v^\lambda$ from above at $y_0\circ g$, and Lemma~\ref{lem:compose-derivative} gives
$$
D\phi(y_0\circ g)=D\psi(y_0)\circ g.
$$
Using the viscosity subsolution inequality and assumption~\rm{(H2)}, we obtain
$$
\lambda\widetilde w(y_0)
+
H\bigl(y_0,p\chi_I+D\psi(y_0)\bigr)
\leq0.
$$
Note that $(p\chi_I)\circ g = p\chi_I$. The supersolution inequality follows similarly. Thus $\widetilde w$ solves \eqref{eq:discount-cell}, and uniqueness gives
$$
v^\lambda(y\circ g)=v^\lambda(y), \qquad\text{for all }y\in V \text{ and } g\in G.
$$
Thus $v^\lambda$ is both $\Lambda$-periodic and $G$-invariant.

\medskip
\noindent
\textbf{Step 2. $v^\lambda$ descends to a function on $SS^d$.}
We now show that $v^\lam$ is constant on equivalence classes of the relation defining $SS^d$. Suppose $y_1\sim y_2$. By definition, there exist sequences
$\{g_n\}\subset G$ and $\{z_n\}\subset\Lambda$ such that
\[
\|y_1-y_2\circ g_n-z_n\|_{L^2}\to 0
\qquad\text{as }n\to\infty.
\]
Using the $\Lambda$-periodicity and $G$-invariance established in Step~1,
\[
v^\lambda(y_2\circ g_n + z_n)=v^\lambda(y_2)
\qquad\text{for all }n.
\]
Since $v^\lambda$ is continuous on $V$,
\[
v^\lambda(y_1)=\lim_{n\to\infty} v^\lambda(y_2\circ g_n+z_n)=v^\lambda(y_2).
\]
Hence $v^\lam$ descends to a well-defined function on $SS^d$, still denoted by $v^\lambda ([\cdot])$.

This is precisely where the compactness of $SS^d$ becomes essential: although $V$ is infinite dimensional and noncompact, the quotient space $SS^d$ is compact, so one can apply the usual compactness tools to the family $\{v^\lambda\}_{\lambda>0}$.

\medskip
\noindent
\textbf{Step 3. Uniform bounds and passage to the limit.}
Note that \(H(\cdot,p\chi_I)\) is \(\Lambda\)-periodic and \(\mathcal G\)-invariant. Hence we may set
\[
C_0:=\max_{[y]\in SS^d} |H(y,p\chi_I)|.
\]
Then the constant functions $-C_0/\lambda$ and $C_0/\lambda$ are a viscosity subsolution and a viscosity supersolution of \eqref{eq:discount-cell}, respectively. By the comparison principle,
\[
-\frac{C_0}{\lambda}\le v^\lambda(y)\le \frac{C_0}{\lambda}
\qquad\text{for all }y\in V,
\]
and therefore
\begin{equation}\label{eq:s3_3.5}
    |\lambda v^\lambda(y)|\le C_0 \qquad\text{for all }y\in V.
\end{equation}
By coercivity, there exists $C_1>0$, independent of $\lambda$, such
that
$$
H(y,p\chi_I+q)>C_0
\qquad
\text{whenever }\|q\|>C_1.
$$
\eqref{eq:s3_3.5} implies that $v^\lambda$ is a viscosity subsolution of
$$
\|Dv^\lambda\|-C_1 \leq0.
$$
Lemma~\ref{lem:eikonal-lipschitz} therefore gives
\[
\|Dv^\lambda\|_\infty\le C_1.
\]
Since \(v^\lambda\) is Lipschitz on \(V\), for every \(x,y\in V\), $z\in \Lam$, and $g \in \mathcal G$,
\[
\begin{aligned}
|v^\lambda(x)-v^\lambda(y)| =
|v^\lambda(x)-v^\lambda(y\circ g+z)| \le C_1\|x-y\circ g-z\|.
\end{aligned}
\]
Taking the infimum gives
\[
    |v^\lambda(x)-v^\lambda(y)|
    \le C_1 \cdot d_{SS^d}([x],[y]).
\]
Thus \(v^\lambda\) descends to a Lipschitz continuous function on \(SS^d\).

Fix a reference point $\bar y\in V$, and define
\[
w^\lambda([y]):=v^\lambda([y])-v^\lambda([\bar y]), \qquad y \in V.
\]
Then $\{w^\lambda \}_{\lambda>0}$ is equi-Lipschitz on $SS^d$. Also, it is equibounded since $SS^d$ has finite diameter. By Arzel\`a-Ascoli, there exist a sequence $\lambda_j\to 0$, a constant $c\in\R$, and a function $w\in \Lip(SS^d)$ such that
\[
    w^{\lambda_j}\to w \qquad\text{uniformly on }SS^d,
\]
and
\[
    -\lambda_j v^{\lambda_j}([\bar y])\to c.
\]
as $j \to \infty$. Since $\widetilde w^\lam :=  w^\lambda \circ \pi = v^\lam - v^\lam (\bar y)$ solves
\[
\lambda \widetilde w^\lambda(y) + H\bigl(y,p\chi_I + D\widetilde w^\lambda(y)\bigr)
= -\lambda v^\lambda(\bar y)
\qquad\text{in }V,
\]
the stability of viscosity solutions implies that $v:= w \circ \pi$ solves
\[
H\bigl(y,p\chi_I + Dv(y)\bigr)=c
\qquad\text{in }V.
\]

\medskip
\noindent
\textbf{Step 4. Uniqueness of the ergodic constant.}
Suppose $(v_1,c_1)$ and $(v_2,c_2)$ are two correctors and corresponding ergodic constants of the cell problem with $c_1<c_2$. Since $v_1,v_2$ are bounded on $SS^d$, we may choose $\delta>0$ small enough so that
\[
\delta v_1([y]) + c_1 < \delta v_2([y])+c_2
\qquad\text{for all }y\in V.
\]
Equivalently,
\[
\delta v_1(y)+H\bigl(y,p\chi_I + Dv_1(y)\bigr)
< \frac{c_1+c_2}{2}<
\delta v_2(y)+H\bigl(y,p\chi_I + Dv_2(y)\bigr)
\qquad\text{in }V.
\]
By the comparison principle for the discounted equation, we obtain
\[
v_1\le v_2 \qquad\text{on }V.
\]
For any fixed \(C>0\), choosing \(\delta>0\) even smaller if necessary, the same comparison applied to \(v_1+C\) gives \(v_1+C\le v_2\), which is impossible for large \(C\). Therefore $c_1=c_2$, and the ergodic
constant is unique.

We thus define
\[
\overline H(p):=c,
\qquad p\in\mathbb R^d.
\]
As a consequence of the uniqueness of the ergodic constant, the
vanishing-discount convergence holds for the full family:
\[
-\lambda v^\lambda(\cdot;p)
 \longrightarrow\overline H(p)
\qquad\text{uniformly on }V
\quad\text{as }\lambda\downarrow0.
\]
This completes the proof.
\end{proof}

Throughout the rest of the paper, when a function \(v\) is defined on \(SS^d\), we identify it with its lift \(v\circ\pi\) to \(V\), where $\pi$ denotes the canonical quotient map. Thus the cell problem is always understood in the viscosity sense for the lifted function on \(V\). This convention allows us to use the compactness of \(SS^d\) while keeping the viscosity structure on the Hilbert space \(V\).

Proposition~\ref{prop:effective-H} gives the effective Hamiltonian on $\R^d\cong Y^\perp$, the space of momenta relevant to the effective dynamics. We next record its basic properties.

\begin{prop}\label{prop:3.3}
The effective Hamiltonian
\[
\overline H:\mathbb R^d\to\mathbb R
\]
is coercive and locally Lipschitz. More precisely, for every \(R>0\)
there exists \(C_R>0\) such that
\begin{equation}\label{s3_5}
|\overline H(p)-\overline H(q)|
\leq C_R|p-q|
\qquad
\text{for all }p,q\in B_{\R^d}(0,R).
\end{equation}
\end{prop}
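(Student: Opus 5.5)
We prove both properties at the level of the discounted problems \eqref{eq:discount-cell}, where comparison is available, and pass to the limit using the vanishing-discount convergence $-\lambda v^\lambda(\cdot;p)\to\overline H(p)$ from Proposition~\ref{prop:effective-H}.

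\textbf{Local Lipschitz bound.} Fix $R>0$ and $p,q\in B_{\R^d}(0,R)$, and let $v^\lambda_p$, $v^\lambda_q$ be the discounted solutions. Step 3 of the proof of Proposition~\ref{prop:effective-H} gives $\|Dv^\lambda_p\|_\infty\le C_1(p)$. Inspecting that step, $C_1$ depends only on $C_0(p)=\sup_y|H(y,p\chi_I)|$ and on the coercivity in \rm{(H4)}. Since $\|p\chi_I\|=|p|$, Lemma~\ref{lem:H-bounded-on-bounded-p} bounds $C_0(p)$ uniformly for $|p|\le R$. Hence there is $K_R$, independent of $\lambda$ and of $p\in B(0,R)$, with $\|Dv^\lambda_p\|_\infty\le K_R$.

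Set $R':=R+K_R$, so that $\|p\chi_I+Dv^\lambda_p\|\le R'$. By \rm{(H3)},
\[
H(y,q\chi_I+Dv^\lambda_p)\le H(y,p\chi_I+Dv^\lambda_p)+L_{R'}|p-q|.
\]
This inequality holds in the viscosity sense, because any test function touching $v^\lambda_p$ at $y_0$ has derivative of norm at most $K_R$. (If needed, one can reduce to such test functions by the Stegall perturbation used in Lemma~\ref{lem:eikonal-lipschitz}.) Consequently $v^\lambda_p-L_{R'}|p-q|/\lambda$ is a viscosity subsolution of the discounted problem with momentum $q$. Comparison (Proposition~\ref{prop:wellposedness}(3)) gives
\[
v^\lambda_p-\frac{L_{R'}|p-q|}{\lambda}\le v^\lambda_q .
\]
Swapping the roles of $p$ and $q$ yields
\[
|\lambda v^\lambda_p-\lambda v^\lambda_q|\le L_{R'}|p-q| .
\]
Letting $\lambda\to0$ gives \eqref{s3_5} with $C_R=L_{R'}$, and continuity of $\overline H$ follows.

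\textbf{Coercivity.} We need a lower bound $\overline H(p)\ge\inf_{y}\min_{\|\xi\|\ge|p|}H(y,\xi)$, up to viscosity technicalities. The right-hand side tends to $+\infty$ as $|p|\to\infty$ by \rm{(H4)}. The orthogonality $p\chi_I\perp Y$ suggests $\|p\chi_I+\xi\|\ge|p|$, but this requires $\xi\in Y$, and test-function derivatives need not lie in $Y$. We therefore argue via a maximum point of $v^\lambda$ instead.

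On the compact space $SS^d$, the Lipschitz function $v^\lambda$ attains its maximum at some $[y_0]$. Lifting to $V$ gives a global maximum at $y_0$. The constant function is then a $C^1$ test function touching $v^\lambda$ from above at $y_0$, with zero derivative. The subsolution inequality gives
\[
\lambda v^\lambda(y_0)+H(y_0,p\chi_I)\le 0,
\]
so $-\lambda v^\lambda\ge -\lambda v^\lambda(y_0)\ge H(y_0,p\chi_I)\ge\inf_yH(y,p\chi_I)$ everywhere. Letting $\lambda\to0$,
\[
\overline H(p)\ge\inf_{y\in V}H(y,p\chi_I).
\]
Since $\|p\chi_I\|=|p|$, this tends to $+\infty$ as $|p|\to\infty$ by \rm{(H4)}.

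\textbf{Main obstacle.} The delicate step is justifying the Lipschitz-derivative bound on test functions, so that the \rm{(H3)} estimate for $H$ in $p$ applies in the viscosity sense. In infinite dimensions, a test function touching a Lipschitz function need not have derivative bounded by its Lipschitz constant unless the touching is handled carefully. The remedy is that for any $C^1$ test function $\psi$ touching a $K$-Lipschitz function from above at a point, one has $\|D\psi\|\le K$ there, by comparing values along the direction of $D\psi$. This is elementary and suffices.
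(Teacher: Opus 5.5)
Your proof is correct and follows essentially the same route as the paper. The local Lipschitz bound comes from comparing the discounted problems for $p$ and $q$, using the $\lambda$-uniform Lipschitz bound on $v^\lambda(\cdot;p)$ for $|p|\le R$ and the (H3) constant on $B_V(0,R+K_R)$, then letting $\lambda\to0$; coercivity comes from testing with a constant at a maximum point. The only differences are minor: you use a maximum point of $v^\lambda$ rather than of the corrector $v_p$, and you prove only the lower bound $\overline H(p)\ge\inf_y H(y,p\chi_I)$, which is all coercivity needs. You also make explicit the elementary fact that a $C^1$ function touching a $K$-Lipschitz function has derivative of norm at most $K$ at the contact point, which the paper uses tacitly.
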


\begin{proof}
Let \(v_p\) be a corrector for \(p\chi_I\). At a maximum point $y_{\max}$ of \(v_p\), the constant test function gives, by the viscosity subsolution inequality,
\[
H(y_{\max},p\chi_I)\leq\overline H(p).
\]
At a minimum point $y_{\min} \in V$ of $v_p$, the viscosity supersolution inequality gives
\[
\overline H(p)\leq H(y_{\min},p\chi_I).
\]
Therefore,
\begin{equation*}
\min_{[y]\in SS^d}H(y,p\chi_I)
\leq
\overline H(p)
\leq
\max_{[y]\in SS^d}H(y,p\chi_I).
\end{equation*}
Coercivity of \(H\) implies
\[
\overline H(p)\to+\infty
\qquad\text{as} \qquad |p|\to\infty.
\]

We next prove local Lipschitz continuity. Let $v^\lam(\cdot;p)$ be the viscosity solution of the discounted equation~\eqref{eq:discount-cell}. Fix \(R>0\). The proof of Proposition~\ref{prop:effective-H} gives a constant \(C_R>0\) such that
\[
\operatorname{Lip}
v^\lambda(\,\cdot\,;p)\leq C_R
\]
for every \(p\in B(0,R)\) and \(\lambda\in(0,1]\). Let \(L_{R}\) be the momentum Lipschitz constant from \rm{(H3)} on $B_V(0,R+C_R)$. If \(p,q\in B(0,R)\), comparison of the two discounted equations gives
\[
v^\lambda(\,\cdot\,;q)
-\frac{L_R}{\lambda}|p-q|
\leq
v^\lambda(\,\cdot\,;p)
\leq
v^\lambda(\,\cdot\,;q)
+\frac{L_R}{\lambda}|p-q|.
\]
Thus
\[
\lambda
\left|
v^\lambda(y;p)-v^\lambda(y;q)
\right|
\leq
L_R|p-q|.
\]
Letting \(\lambda\downarrow0\), the uniqueness of the ergodic constant gives
\[
    |\overline H(p)-\overline H(q)| \leq L_R|p-q|.
\]
This proves the claim.
\end{proof}

\subsection{The effective equation} 
We are now ready to identify the equation satisfied by the locally uniform limit of $u^\e $. 

\begin{proof}[Proof of Theorem~1.1]
Let \(u\) be a locally uniform limit along a subsequence \(\varepsilon\to0\), and write
\[
u(x,t)=\widetilde u(\mathfrak m(x),t),
\qquad x\in V,\quad t\in[0,\infty).
\]
We first establish the subsolution inequality for \(u\) and then express it in terms of \(\widetilde u\). The supersolution property follows analogously.

Let $\phi \in C^1(V\times [0,\infty))$ touch $u$ from above at $(x_0,t_0)$. Then,
\[
    D_x\phi(x_0,t_0)=p_0\chi_I
\]
for some $p_0 \in \R^d$. Let $v\in \Lip(V)$ be a solution of the cell problem associated with $p_0\chi_I$, namely
\[
    H(y,p_0\chi_I + Dv(y))=\overline H(p_0) \qquad\text{in }V.
\]
Fix \(r>0\) small enough so that \(t_0-r>0\), and set
\[
    Q_r:=\overline{B(x_0,r)}\times [t_0-r,t_0+r].
\]
By replacing \(\phi\) with $\phi+\|x-x_0\|^2+|t-t_0|^2 $, if necessary, we may assume that \(u-\phi\) has a strict maximum at $(x_0,t_0)$ in $Q_r$, and that
\begin{equation}\label{eq:strict-local-touch}
    u(x,t)-\phi(x,t) \le u(x_0,t_0)-\phi(x_0,t_0) -\|x-x_0\|^2-|t-t_0|^2 \qquad \text{in} \;Q_r .
\end{equation}
In particular, 
\begin{equation}\label{eq:outside-penalty}
    u(x,t)-\phi(x,t) \le u(x_0,t_0)-\phi(x_0,t_0)-r^2 \qquad \text{on } \partial Q_r .
\end{equation}

\medskip
\noindent
\textbf{Step 1. Doubling the variables.}
For $\eta>0$, define
\[
\Phi(x,y,t)
:=
u^\e (x,t)-\phi(x,t)-\e  v(y)-\frac1\eta\left\|y-\frac{x}{\e }\right\|^2,
\qquad (x,t)\in Q_r,\ y\in V .
\]
Choose a sequence $(x_n,t_n, y_n) \in Q_r\times V$ such that
\[
\Phi(x_n,y_n,t_n)\;\uparrow\; \sup_{Q_r\times V} \Phi
\]
and
\[
\Phi(x_n,y_n,t_n)\ge \Phi\!\left(x_n,\frac{x_n}{\e },t_n\right).
\]
From the inequality above, we immediately obtain
\begin{equation}\label{eq:xy-close}
\left\|y_n-\frac{x_n}{\e }\right\|^2 \le C\e \eta.
\end{equation}
\eqref{eq:xy-close} implies that the fast variables \(y_n\) and \(x_n/\e \) remain \(O(\sqrt{\e \eta})\)-close, which is the estimate needed later when comparing the two Hamiltonians. 

Since \(v\) is bounded, we have
\[
\Phi(x_0,x_0/\e ,t_0)
\ge u(x_0,t_0)-\phi(x_0,t_0)-\omega_r(\e )
-\e \|v\|_\infty ,
\]
where \(\omega_r(\e )\to0\) is the modulus of local uniform convergence
of \(u^\e \to u\) on \(Q_r\). 

On the other hand, if \((x,t)\in \partial Q_r\), then
\[
\Phi(x,y,t) \le u(x_0,t_0)-\phi(x_0,t_0)
-r^2+\omega_r(\e )+\e \|v\|_\infty .
\]
Therefore, for \(\e >0\) sufficiently small,
\begin{equation}\label{eq:boundary-bound}
\sup_{\partial Q_r\times V}\Phi < \Phi(x_0,x_0/\e ,t_0) - \frac{r^2}{2}.
\end{equation}
Indeed, \eqref{eq:boundary-bound} implies that the maximizing sequence \((x_n,t_n)\) stays away from $\partial Q_r$.

\medskip
\noindent
\textbf{Step 2. Find the maximizer using the Radon--Nikodym property.}
Fix $\gamma>0$, and localize near $(x_n,y_n,t_n)$ by setting
\[
    S_n :=
    \left\{
    (x,y,t)\in V\times V\times (0,\infty):
    \left\|\frac{x}{\e }-\frac{x_n}{\e }\right\|^2 + \|y-y_n\|^2 + |t-t_n|^2 \le \gamma^2
    \right\}.
\]
Choose \(\gamma>0\) small enough so that the projection of \(S_n\) onto the \((x,t)\)-variables is contained in $Q_r$. Let
\[
    \delta_n := \sup_{Q_r\times V} \Phi - \Phi(x_n,y_n,t_n),
\]
and define
\[
    \Psi_n(x,y,t) :=
    \Phi(x,y,t)
    -\frac{2\delta_n}{\gamma^2}
    \left(
    \left\|\frac{x}{\e }-\frac{x_n}{\e }\right\|^2 + \|y-y_n\|^2 + |t-t_n|^2
    \right)
    \qquad\text{on }S_n.
\]
If $p: S_n \rightarrow\R$ varies less than $\del_n$ over $S_n$ and $\Psi_n + p$ attains its maximum on $S_n$, then this point must be interior to $S_n$. By the Radon--Nikodym property, there exist $p_n,q_n\in V$ and $k_n\in \R$ with
\begin{equation}\label{eq:perturb-small}
\|p_n\| + \|q_n\| +  |k_n| \le \frac{\delta_n}{2\gamma},
\end{equation}
such that
\[
(x,y,t)\mapsto \Psi_n(x,y,t)-\langle p_n,x/\e\rangle-\langle q_n,y\rangle-k_n t
\]
attains its maximum over $S_n$ at some point $(\hat x_n,\hat y_n,\hat t_n)\in \text{int}(S_n)$. In particular,
\begin{equation}\label{eq:yx-close2}
    \left\|\hat y_n-\frac{\hat x_n}{\e }\right\| \le 2\gamma + C\sqrt{\e \eta}.
\end{equation}

\medskip
\noindent
\textbf{Step 3. Subsolution and supersolution tests.}
Since $(\hat x_n,\hat t_n)$ is a local maximum of
\[
    (x,t)\mapsto
    \Psi_n(x,\hat y_n,t)-\langle p_n,x/\e\rangle-\langle q_n,\hat y_n\rangle-k_n t,
\]
the viscosity subsolution test for $u^\e $ yields
\begin{multline}\label{eq:1-subsol-ineq}
\phi_t(\hat x_n,\hat t_n)
+\frac{4\delta_n}{\gamma^2}(\hat t_n-t_n)
+k_n
+
H\!\left(
\frac{\hat x_n}{\e },
D\phi(\hat x_n,\hat t_n)
+\frac{2}{\eta\e }\left(\frac{\hat x_n}{\e }-\hat y_n\right)
+\mathcal{E}_n^1
\right)
\le 0,
\end{multline}
where
\[
\mathcal{E}_n^1
=
\frac{4\delta_n}{\gamma^2\e }\left(\frac{\hat x_n}{\e }-\frac{x_n}{\e }\right)+ \frac{p_n}{\e}.
\]

Likewise, since $\hat y_n$ is a local minimum of
\[
y\mapsto
\e  v(y)
+
\frac1\eta\left\|y-\frac{\hat x_n}{\e }\right\|^2
+
\frac{2\delta_n}{\gamma^2}\|y-y_n\|^2
+\langle q_n,y\rangle,
\]
the viscosity supersolution test for the cell problem gives
\begin{equation}\label{eq:1-supersol-ineq}
-\overline H(p_0)
+
H\!\left(
\hat y_n,
p_0\chi_I
+\frac{2}{\eta\e }\left(\frac{\hat x_n}{\e }-\hat y_n\right)
+\mathcal{E}_n^2
\right)
\ge 0,
\end{equation}
where
\[
\mathcal{E}_n^2 = -\frac{4\delta_n}{\gamma^2\e}(\hat y_n-y_n)- \frac{q_n}{\e}.
\]

By construction,
\begin{equation}\label{eq:E-small}
\|\mathcal{E}_n^1\|\to 0,
\qquad
\|\mathcal{E}_n^2\|\to 0
\qquad\text{as }n\to\infty
\end{equation}
because $\del_n \to 0$ as $n\to \infty$.

\medskip
\noindent
\textbf{Step 4. Convergence of $(\hat{x}_n, \hat{t}_n)$.}
Since $\Phi(\hat x_n,\hat y_n,\hat t_n)$ converges to $\sup_{Q_r\times V} \Phi$ as $n \to \infty$ and $\gam \to 0$, we may compare with $\Phi(x,x/\e ,t)$ and obtain
\begin{multline*}
u^\e (x,t)-\phi(x,t)-\e  v\!\left(\frac{x}{\e }\right)
\le \\
\lim_{\gam \to 0}\limsup_{n\to\infty}
\left[
u^\e (\hat x_n,\hat t_n)-\phi(\hat x_n,\hat t_n)-\e  v(\hat y_n)
-\frac1\eta\left\|\hat y_n-\frac{\hat x_n}{\e }\right\|^2
\right].
\end{multline*}
Letting $\e \to 0$, and using
\eqref{eq:yx-close2}, we arrive at
\[
u(x,t)-\phi(x,t)
\le
\lim_{\e \to 0^+}\lim_{\gam \to 0}\limsup_{n\to\infty}
\bigl(u^\e (\hat x_n,\hat t_n)-\phi(\hat x_n,\hat t_n)\bigr).
\]
From the uniform convergence of $u^\e \to u$ on $Q_r$ and the condition \eqref{eq:strict-local-touch}, we conclude that 
\begin{equation*}
    \lim_{\e \to 0}\lim_{\gam\to0}\limsup_{n\to\infty}
\left(
\|\hat{x}_n-x_0\|^2+|\hat t_n-t_0|^2
\right)=0.
\end{equation*}

This implies that
\[
(\hat x_n,\hat t_n)\to (x_0,t_0)
\]
up to subsequences, as $n\to\infty, \gam\to 0, \e \to 0$.

\medskip
\noindent

\textbf{Step 5. Passage to the limit.}
Define
\[
P_n
:=
D\phi(\hat x_n,\hat t_n)
+
\frac{2}{\eta\varepsilon}
\left(
\frac{\hat x_n}{\varepsilon}-\hat y_n
\right)
+\mathcal E_n^1
\]
and
\[
Q_n
:=
p_0 \chi_I
+
\frac{2}{\eta\varepsilon}
\left(
\frac{\hat x_n}{\varepsilon}-\hat y_n
\right)
+\mathcal E_n^2.
\]
\eqref{eq:1-subsol-ineq} implies
\[
H\left(\frac{\hat x_n}{\varepsilon},P_n\right)
\leq
-\phi_t(\hat x_n,\hat t_n)
-\frac{4\delta_n}{\gamma^2}(\hat t_n-t_n)
-k_n.
\]
For fixed \(\varepsilon,\eta,\gamma>0\), the right-hand side is bounded uniformly for all sufficiently large \(n\). Coercivity therefore gives
\[
\|P_n\|\leq R
\]
for all sufficiently large $n$. The threshold index may depend on the auxiliary parameters, but $R$ does not.

Since \(D\phi(\hat x_n,\hat t_n)\) and \(\mathcal E_n^1\) are bounded for fixed $\e, \eta$, and $\gam$,
it follows that
\[
\frac{2}{\eta\varepsilon}
\left(
\frac{\hat x_n}{\varepsilon}-\hat y_n
\right)
\]
is uniformly bounded for sufficiently large $n$. Consequently, \(Q_n\) is uniformly bounded as well. Thus one fixed local momentum Lipschitz constant from \rm{(H3)} may be used in both viscosity inequalities.

Subtracting \eqref{eq:1-supersol-ineq} from \eqref{eq:1-subsol-ineq}, and using the local Lipschitz regularity of $H$, we obtain
\begin{multline} \label{eq:1-final-ineq}
    \phi_t(\hat{x}_n, \hat{t}_n) + \overline{H}(p_0) \leq -k_n - \frac{4\del_n}{\gam^2}(\hat{t}_n - t_n) \\
    + C(\|\hat{y}_n - \frac{\hat{x}_n}{\e }\|+\|p_0 \chi_I - D\phi(\hat{x}_n, \hat{t}_n)\| + \|\mathcal{E}_n^1\| + \|\mathcal{E}_n^2\|).
\end{multline}

The limits are taken in the following order: first \(n\to\infty\) with \(\varepsilon,\eta,\gamma\) fixed; then \(\gamma\downarrow0\); and finally \(\varepsilon\downarrow0\), with \(\eta>0\) fixed. Using \eqref{eq:perturb-small}, \eqref{eq:yx-close2}, \eqref{eq:E-small}, \eqref{eq:1-final-ineq} and the convergence $(\hat x_n,\hat t_n)\to (x_0,t_0)$, we deduce
\begin{equation} \label{eq:s3_15}
\phi_t(x_0,t_0)+\overline H(p_0)\le 0.
\end{equation}
This implies that $u$ is a viscosity subsolution of the effective equation.

\medskip
\noindent
\textbf{Step 6. Finite-dimensional limit.}
We now express the preceding viscosity inequality in terms of the finite-dimensional limit. Define $\widetilde u: \R^d \times [0,\infty)$ by
$$
    u(x,t) = \widetilde u(\mathfrak m(x),t),
    \qquad
    (x,t)\in\mathbb V\times[0,\infty).
$$
Let $\psi \in C^1(\R^d\times [0,\infty))$ touch $\widetilde u$ from above at $(q_0,t_0)$. Then 
\begin{equation*}
    \phi(x,t) := \psi(\mathfrak m(x), t)\in C^1(V \times [0,\infty)), \qquad (x,t) \in V\times [0,\infty)
\end{equation*}
touches $u$ from above at $(q_0\chi_I, t_0)$ and 
\begin{equation*}
    \phi_t(q_0\chi_I,t_0) = \psi_t(q_0,t_0),\qquad D\phi(q_0\chi_I, t_0) = D\psi(q_0,t_0) \chi_I.
\end{equation*}
\eqref{eq:s3_15} implies
\[
    \psi_t(q_0,t_0) + \overline H (D\psi(q_0,t_0)) \leq 0.
\]
Moreover, for every $q\in\mathbb R^d$,
\[
\widetilde u(q,0)
=u(q\chi_I,0)
=u_0(q\chi_I)
=\widetilde u_0(q).
\]
Thus, $\widetilde u$ is a viscosity subsolution of $\mathrm{(\overline{CP})}$.

The supersolution property is proved in the same way by testing with a function touching $u$ from below. Therefore, $\widetilde u$ is the viscosity solution of $\mathrm{(\overline {CP})}$. Proposition~\ref{prop:3.3} and the standard finite-dimensional comparison principle imply that this solution is unique. Hence the whole family $u^\e $ converges locally uniformly to $u$.
\end{proof}

This completes the qualitative homogenization argument. The cell problem has produced the effective Hamiltonian, and the perturbed test function method has identified the effective equation satisfied by the macroscopic limit. In the next section, we refine this argument to obtain the quantitative convergence rate. 

\begin{rem}
As already emphasized in Remark~\ref{rem1}, the effective Hamiltonian is only relevant on the finite-dimensional space $Y^\perp \cong \R^d$. Likewise, one may associate the original Hamiltonian $H$ with the finite-dimensional Hamiltonian
\begin{equation*}
    \widetilde H(\tilde x,\tilde p):=
    H(\tilde x\chi_I,\tilde p\chi_I),
    \qquad
    (\tilde x,\tilde p)\in \R^d\times \R^d,
\end{equation*}
and consider the corresponding homogenization problem
\[
\begin{cases}
\widetilde u_t^\e + \widetilde H\left(\dfrac{\tilde x}{\e},D\widetilde u^\e\right)=0
& \text{in }\R^d\times(0,\infty),\\
\widetilde u^\e(\tilde x,0)=\widetilde u_0(\tilde x)
& \text{on }\R^d.
\end{cases}
\]
By the standard finite-dimensional homogenization theory, this problem converges to a certain effective equation. However, this effective equation need not coincide with $\mathrm{(\overline{CP})}$, because the effective Hamiltonian associated with \(\widetilde H\) need not equal \(\overline H\). More precisely, the cell problem defining \(\overline H\) is posed on the full quotient \(SS^d\), not only on the constant configurations. Therefore, any comparison between the solution of the induced finite-dimensional oscillatory problem, $\widetilde u^\e (\tilde x, t)$, and the restriction \(u^\e (\tilde x\chi_I,t)\) requires an additional argument or assumption.
\end{rem}


\section{The rate of convergence in homogenization}\label{sec4} 
In this section, we prove Theorem~\ref{thm2}. We first make a standard reduction of the Hamiltonian that gives global control in the momentum variable.

\begin{lem}\label{lem4.1}
Under the assumptions of Theorem~\ref{thm2}, we may assume without loss of generality that there exists \(L>0\) such that
\begin{equation}\label{eq:s4_1}
    |H(x,p)-H(y,q)| \leq L\bigl(\|x-y\|+\|p-q\|\bigr)
\end{equation}
for all \(x,y,p,q\in V\), and that
\begin{equation}\label{eq:s4_2}
    L^{-1}\|p\|-L \leq H(x,p) \leq L(1+\|p\|)
\end{equation}
for all \(x,p\in V\).
\end{lem}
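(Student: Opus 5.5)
The idea is to modify $H$ only at large momenta, in a region that neither the solutions $u^\e$ nor the correctors ever reach. This leaves every object in Theorem~\ref{thm2} unchanged, and the modified Hamiltonian satisfies \eqref{eq:s4_1}--\eqref{eq:s4_2}.

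\textbf{Step 1: a momentum bound for all relevant objects.} Proposition~\ref{prop:uniform-lipschitz} gives $\|Du^\e\|\le C$ uniformly in $\e$. Fix $T>0$, and let $P$ be a bound for the momenta $p$ appearing in the cell problems used in the rate argument. We may take $P$ to be at least the Lipschitz constant of $\widetilde u$ on $[0,T]$. The proof of Proposition~\ref{prop:effective-H} (Step~3) gives a constant $C_1(P)$ such that the discounted solutions $v^\lambda(\cdot;p)$ and the correctors satisfy $\|Dv\|\le C_1(P)$ for all $|p|\le P$. Set
\[
R_1:=\max\{C,\ P+C_1(P)\}+1 .
\]
Then every momentum argument $Du^\e$ or $p\chi_I+Dv$ that enters the equations has norm at most $R_1-1$.

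\textbf{Step 2: truncation.} Fix $R_2>R_1$, to be chosen below, and a Lipschitz cutoff $\theta(s)$ with $\theta=1$ for $s\le R_1$ and $\theta=0$ for $s\ge R_2$. Define
\[
\widehat H(x,p):=\theta(\|p\|)\,H(x,p)+\bigl(1-\theta(\|p\|)\bigr)\bigl(K\|p\|-K'\bigr).
\]
An alternative is the simpler form $\widehat H(x,p)=\max\{H(x,p\wedge),\,\|p\|-K\}$ with a radial projection. We will check the following properties.
\begin{enumerate}
\item[(a)] \textbf{Symmetries.} $\widehat H$ inherits (H1) and (H2), because $\|p\circ g\|=\|p\|$.
\item[(b)] \textbf{Global Lipschitz bound.} $\widehat H$ satisfies \eqref{eq:s4_1} globally. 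This uses (H3) on the ball $B(0,R_2)$, where the $x$-Lipschitz constant is $L_x(1+R_2)$. It also uses Lemma~\ref{lem:H-bounded-on-bounded-p}, which bounds $|H|$ on that ball and so controls the product-rule term $\theta'(\|p\|)\,H$. Outside the ball, $\widehat H$ is linear in $\|p\|$ and independent of $x$.
\item[(c)] \textbf{Growth bounds.} The two-sided bound \eqref{eq:s4_2} follows for suitable $K,K'$. The upper bound holds by the same bounds. For the lower bound, use (H4) to pick $R_1$ large enough that $H(x,p)\ge 1$ whenever $\|p\|\ge R_1$. Then choose $K'$ so that the interpolation stays coercive with linear growth.
\end{enumerate}

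\textbf{Step 3: the solutions are unchanged.} The unique BUC solution $u^\e$ of $\mathrm{(CP)_\e}$ is also a viscosity solution for $\widehat H$. Indeed, by Lemma~\ref{lem:time-slice} and Lemma~\ref{lem:eikonal-lipschitz}, any $C^1$ test function touching $u^\e$ at an interior point can be perturbed, via Stegall's principle as in Lemma~\ref{lem:time-slice}, into one whose gradient lies in $B(0,R_1)$ up to a small error. There $H=\widehat H$, so the viscosity inequalities transfer between the two Hamiltonians. Uniqueness from Proposition~\ref{prop:wellposedness} then identifies the two solutions. The same argument applies to the discounted and ergodic cell problems, so the correctors are unchanged. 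By the uniqueness in Proposition~\ref{prop:effective-H}, $\widehat{\overline H}=\overline H$ on $B(0,P)$. For the limit problem $\mathrm{(\overline{CP})}$, $\widetilde u$ is $P$-Lipschitz, and the finite-dimensional theory only sees $\overline H$ on $B(0,P)$. Hence all quantities in Theorem~\ref{thm2} are the same for $H$ and $\widehat H$.

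\textbf{Main obstacle.} The delicate point is Step~3. In infinite dimensions, a Lipschitz bound on a viscosity solution does not directly bound the derivatives of every test function touching it. The fix is that of Lemma~\ref{lem:eikonal-lipschitz}: a test function touching from above with $\|D\phi\|>L$ contradicts Lipschitz continuity along the ray direction. Concretely, if $\phi$ touches $w$ from above at $x_0$, then $w(x_0)-w(x_0-sD\phi/\|D\phi\|)\le$ fails to match. This gives $\|D\phi(x_0)\|\le \operatorname{Lip}(w)$ for upper test functions. For lower test functions the analogous argument gives the same bound. I would state this as a short auxiliary claim before concluding.
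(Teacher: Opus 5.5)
Your proposal is correct and follows essentially the paper's route: modify $H$ only outside a momentum ball that contains $Du^\varepsilon$ and $p\chi_I+Dv$ for all $|p|$ up to the Lipschitz constant of $\widetilde u$, check (H1)--(H2) and the global bounds, and then use uniqueness to see that $u^\varepsilon$, and $\overline H$ on the relevant ball (hence $\widetilde u$), are unchanged. The paper's truncation $H(x,\Pi_R p)+(\|p\|-R)_+$ is a simpler version of your cutoff interpolation that avoids the $\theta'(\|p\|)H$ term. Your ``main obstacle'' is also not actually delicate, since your ray argument bounds $\|D\phi(x_0)\|\le\operatorname{Lip}(w)$ directly in any normed space. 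So both the appeal to Stegall in Step~3 and the use of (H4) for the lower growth bound are unnecessary; the latter holds trivially on bounded sets by enlarging $L$.
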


\begin{proof}
By Proposition~\ref{prop:uniform-lipschitz}, we have
\begin{equation}
    \|Du^\e\|_\infty \le L_1
\end{equation}
for every $\e > 0$.  For \(p\in B_{\mathbb R^d}(0,L_1+1)\), let $v_p$ be the solution of the cell problem with $p\chi_I$. In the proof of Proposition~\ref{prop:effective-H}, we have
\[
    \|Dv_p\|_\infty \leq L_2.
\]
Choose $R = L_1 + L_2 + 2$. Define \(\Pi_R:V\to\overline {B_V(0,R)}\) by 
\[
\Pi_R(p) =
\begin{cases}
p,&\|p\|\leq R,\\[1mm]
R\,p/\|p\|,&\|p\|>R.
\end{cases}
\]
Define
\begin{equation}\label{eq:s4_3}
H_R(x,p)
:=
H(x,\Pi_Rp)+(\|p\|-R)_+.
\end{equation}
Since \(\|\Pi_Rp\| \le R\), Lemma~\ref{lem:H-bounded-on-bounded-p} gives
\[
C_R := \sup_{x,p \in V}|H(x,\Pi_Rp)|<\infty.
\]
Consequently, \(H_R\) satisfies the two-sided linear-growth estimate~\eqref{eq:s4_2} by choosing
\[
    L = 1+R+C_R.
\]
The nonexpansiveness of \(\Pi_R\), together with \rm{(H3)}, gives the global Lipschitz estimate~\eqref{eq:s4_1}. Moreover, \(H_R\) still satisfies \rm{(H1)} and \rm{(H2)}.

By construction, all viscosity tests of \(u^\varepsilon\) involve momenta in \(B_V(0,L_1+1)\), where \(H_R=H\). Thus the oscillatory solutions are unchanged. Similarly, for \(p\in B_{\mathbb R^d}(0,L_1+1)\), all momenta occurring in viscosity tests of the corrector \(v_p\) belong to \(B_V(0,R)\). Hence \(v_p\) solves the cell problem for both \(H\) and \(H_R\), and the corresponding effective Hamiltonians coincide on \(B_{\mathbb R^d}(0,L_1+1)\). Thus, the effective equation solved by \(u\) is unchanged.

We may therefore replace \(H\) by \(H_R\). For notational simplicity, we continue to denote the modified Hamiltonian and its effective Hamiltonian by \(H\) and \(\overline H\), respectively.
\end{proof}

We next record the estimates for the discounted correctors that will be used in the proof of Theorem~\ref{thm2}. For $p\in \R^d$, let $v^\lambda(\cdot;p)$ denote the viscosity solution of the discounted cell problem
\[
\lambda v^\lambda(y;p) + H\bigl(y,p\chi_I + Dv^\lambda(y;p)\bigr)=0
\qquad\text{in }V.
\]

\begin{lem}\label{lem4.2}
There exists $C>0$, independent of $\lambda$, such that, for all $x,y\in V$ and $p,q\in\mathbb R^d$,
\begin{equation}\label{eq:vlambda-lip-p-sec4}
    |v^\lambda(x;p) - v^\lam(y;p)| \le C(1+|p|)\|x-y\|, \qquad\lambda |v^\lambda(y;p)-v^\lambda(y;q)|
    \le C|p-q|
\end{equation}
and
\begin{equation}\label{eq:vlambda-ergodic-sec4}
    |\lambda v^\lambda(y;p)+\overline H(p)|
    \le C\lambda(1+|p|).
\end{equation}
\end{lem}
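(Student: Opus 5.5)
The plan is to work throughout with the reduced Hamiltonian of Lemma~\ref{lem4.1}, which is globally $L$-Lipschitz \eqref{eq:s4_1} and has two-sided linear growth \eqref{eq:s4_2}. All three estimates then follow from comparison for the discounted equation (Proposition~\ref{prop:wellposedness}(3)) combined with the eikonal Lipschitz lemma (Lemma~\ref{lem:eikonal-lipschitz}). We use throughout that $\|p\chi_I\|_{L^2}=|p|$, since $\lambda_0(I)=1$.

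\textbf{Spatial Lipschitz bound.} As in Step~3 of Proposition~\ref{prop:effective-H}, the constants $\pm C_0(p)/\lambda$ with $C_0(p):=\sup_y|H(y,p\chi_I)|\le L(1+|p|)$ serve as barriers, so $|\lambda v^\lambda|\le L(1+|p|)$. Then $v^\lambda$ is a viscosity subsolution of $H(y,p\chi_I+Dv^\lambda)\le L(1+|p|)$. By the lower bound in \eqref{eq:s4_2}, any test momentum $\xi$ satisfies $\|p\chi_I+\xi\|\le L(L(1+|p|)+L)$, hence $\|\xi\|\le C(1+|p|)$. Lemma~\ref{lem:eikonal-lipschitz} then gives the first estimate in \eqref{eq:vlambda-lip-p-sec4}.

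\textbf{Dependence on $p$.} Set $w_\pm:=v^\lambda(\cdot;q)\pm L|p-q|/\lambda$. By \eqref{eq:s4_1},
\[
\lambda w_- + H(y,p\chi_I+Dw_-) \le \lambda v^\lambda(\cdot;q) - L|p-q| + H(y,q\chi_I+Dv^\lambda(\cdot;q)) + L|p-q| = 0
\]
in the viscosity sense, since the test functions of $w_-$ are shifts of those of $v^\lambda(\cdot;q)$. So $w_-$ is a subsolution of the $p$-problem, and symmetrically $w_+$ is a supersolution. Comparison then gives $\lambda|v^\lambda(y;p)-v^\lambda(y;q)|\le L|p-q|$.

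\textbf{Ergodic estimate \eqref{eq:vlambda-ergodic-sec4}.} Take the corrector $w$ from Proposition~\ref{prop:effective-H} and put $c=\overline H(p)$.
\begin{itemize}
\item \emph{Lipschitz bound for $w$.} From the proof of Proposition~\ref{prop:3.3}, $\overline H(p)\le\max H(y,p\chi_I)\le L(1+|p|)$. The same coercivity-plus-eikonal argument then gives $\operatorname{Lip} w\le C(1+|p|)$.
\item \emph{Oscillation bound.} Since $w$ is $\Lambda$-periodic and $\mathcal G$-invariant, this bound transfers to $d_{SS^d}$, as in Step~3. Hence $\operatorname{osc} w\le C(1+|p|)\operatorname{diam}(SS^d)$, and $\operatorname{diam}(SS^d)\le\sqrt d/2$ by the nearest-integer argument of Lemma~\ref{lem:H-bounded-on-bounded-p}.
\item \emph{Comparison.} The function $w-\max w-c/\lambda$ satisfies $\lambda(\cdot)+H=\lambda(w-\max w)\le0$, so it is a subsolution of the discounted problem. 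Likewise $w-\min w-c/\lambda$ is a supersolution. Comparison yields
\[
w-\max w\le v^\lambda+c/\lambda\le w-\min w,
\]
hence $|\lambda v^\lambda+c|\le\lambda\operatorname{osc} w\le C\lambda(1+|p|)$.
\end{itemize}

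\textbf{Main obstacle.} The delicate point is justifying comparison between the Lipschitz (non-smooth) corrector shifts and $v^\lambda$. Both are BUC, so Proposition~\ref{prop:wellposedness}(3) and the Crandall--Lions comparison apply once the shifted functions are checked to be viscosity sub- and supersolutions, which is immediate since adding constants does not change test gradients. A second point to verify is that the constant $C$ is uniform in $p$, and not just on bounded sets. This relies on the global bounds of Lemma~\ref{lem4.1}, which is exactly why the reduction is performed first.
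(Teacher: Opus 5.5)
Your proposal is correct and follows essentially the same route as the paper: constant barriers plus Lemma~\ref{lem:eikonal-lipschitz} for the spatial bound, comparison with $v^\lambda(\cdot;q)\pm L|p-q|/\lambda$ for the momentum bound, and comparison with the corrector shifted by $-\overline H(p)/\lambda$ and by its oscillation for \eqref{eq:vlambda-ergodic-sec4}. Your extra details simply fill in what the paper summarizes as ``the preceding argument gives $\operatorname{osc}_V v_p\le C(1+|p|)$'': the bound $\overline H(p)\le L(1+|p|)$, which feeds the corrector's Lipschitz estimate, and $\operatorname{diam}(SS^d)\le\sqrt d/2$.
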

\begin{proof} 
By~\eqref{eq:s4_2}, $|H(y,p\chi_I)|\le L(1+|p|)$. Comparison gives
\[
|\lambda v^\lambda(\cdot;p)|\le L(1+|p|).
\]
This implies that $v^\lambda(\cdot;p)$ is a viscosity subsolution of $\|Dv^\lambda\|-C(1+|p|)\le0$. Lemma~\ref{lem:eikonal-lipschitz} therefore gives the spatial Lipschitz estimate in~\eqref{eq:vlambda-lip-p-sec4}.

Next, using~\eqref{eq:s4_1}, together with comparison with
\[
v^\lambda(\cdot;q)\pm\frac{L}{\lambda}|p-q|,
\]
gives the momentum Lipschitz estimate in~\eqref{eq:vlambda-lip-p-sec4}.

Finally, to prove~\eqref{eq:vlambda-ergodic-sec4}, let \(v_p\) be a normalized corrector solving the cell problem for \(p\chi_I\). The preceding argument gives
\[
\operatorname{osc}_{V}v_p\le C(1+|p|).
\]Comparing \(v^\lambda(\cdot;p)\) with
\[
    v_p-\frac{\overline H(p)}{\lambda}\pm \operatorname{osc}_{V}v_p
\]
gives the result. 
\end{proof}

We record the following finite-dimensional lemma. See \cite{capuzzo-rate} for details.
\begin{lem}\label{lem4.3}
Let \(f,g:\R ^m\to\R \) be
locally Lipschitz. If
\[
    0\in \overline D^-(f+g)(z),
\]
then there exists \(r\in\R ^m\) such that
\[
    r\in \overline D^-f(z),
    \qquad
    -r\in \overline D^-g(z).
\]
\end{lem}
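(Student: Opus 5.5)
The plan is to prove the statement in two stages. First we treat the Fréchet case $0\in D^-(f+g)(z)$ by a doubling-of-variables (fuzzy sum rule) argument. Then we pass to the closure $\overline D^-$ by a diagonal limit. Here $\overline D^-h(z)$ is understood as the set of limits of $p_k\in D^-h(z_k)$ with $z_k\to z$. Throughout, local Lipschitz continuity of $f$ and $g$ supplies a common constant $K$ on a fixed closed ball $\overline{B(z,\rho)}$. Two consequences of this are used repeatedly: every element of $D^-f$ or $D^-g$ in $B(z,\rho)$ has norm at most $K$, and minima of continuous functions on compact balls of $\R^m$ exist. This compactness is available precisely because we are now in finite dimensions.

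\emph{Step 1 (exact case with a tolerance).} Fix $w\in\R^m$ and $p\in D^-(f+g)(w)$, and let $\theta>0$. By definition of the subdifferential, for small $\rho'>0$ the function $x\mapsto f(x)+g(x)-p\cdot(x-w)+\theta|x-w|$ has a strict local minimum at $w$ on $\overline{B(w,\rho')}$. For $k\in\mathbb N$ we minimize
\[
\Phi_k(x,y):=f(x)+g(y)-p\cdot(x-w)+\theta|x-w|+k|x-y|^2+|x-w|^2
\]
over $\overline{B(w,\rho')}^2$, and let $(x_k,y_k)$ be a minimizer. The standard penalization argument (comparing with $\Phi_k(w,w)$ and using the Lipschitz bound on $g$) gives $k|x_k-y_k|^2\to0$ and $(x_k,y_k)\to(w,w)$. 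Hence the minimizers are interior for large $k$. Define $r_k:=-2k(x_k-y_k)$, up to sign conventions. Freezing $y=y_k$ shows that $r_k+p+e_k\in D^-f(x_k)$ with $|e_k|\le\theta+2|x_k-w|$, where the term $\theta|x-w|$ contributes a $\theta$-size error vector. Freezing $x=x_k$ shows that $-r_k\in D^-g(y_k)$. The Lipschitz bound gives $|r_k|\le K$.

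\emph{Step 2 (closure).} Now let $0\in\overline D^-(f+g)(z)$. Choose $z_j\to z$ and $p_j\in D^-(f+g)(z_j)$ with $p_j\to0$. Apply Step 1 at $w=z_j$ with $\theta=1/j$, and pick $k=k(j)$ so large that $|x-z_j|,|y-z_j|<1/j$. This produces points $x_j,y_j\to z$ and vectors $r_j$ such that
\[
r_j+p_j+e_j\in D^-f(x_j),\qquad -r_j\in D^-g(y_j),
\]
with $|e_j|\le 3/j$ and $|r_j|\le K$. By Bolzano--Weierstrass, a subsequence of $r_j$ converges to some $r$. Then $r_j+p_j+e_j\to r$ with base points $x_j\to z$, so $r\in\overline D^-f(z)$. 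Likewise $-r_j\to-r$ with $y_j\to z$, so $-r\in\overline D^-g(z)$.

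The main obstacle is Step 1. One must choose the penalization so that the minimizers genuinely converge to the base point and stay interior. One must also track the error vectors coming from the non-smooth term $\theta|x-w|$ at $x_k\ne w$; one can instead absorb it into $D^-f$ using the smooth replacement $\theta\sqrt{|x-w|^2+\theta^4}$. The boundedness of the $r_k$ is what makes the final compactness step work, and this is exactly where local Lipschitz continuity is essential.
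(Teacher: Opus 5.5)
The paper gives no argument for this lemma beyond a pointer to Capuzzo-Dolcetta--Ishii, so your proof has to stand on its own. Its architecture is the standard one: a doubling-of-variables fuzzy sum rule at the Fr\'echet level, followed by a diagonal limit in which the Lipschitz bound on $g$ makes the $r_j$ precompact. Step 2 is correct as written.

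\textbf{The gap is in Step 1.} The inclusion $r_k+p+e_k\in D^-f(x_k)$ with $|e_k|\le\theta+2|x_k-w|$ follows from freezing $y=y_k$ only when $x_k\neq w$, because only then is $x\mapsto\theta|x-w|$ differentiable at $x_k$. If $x_k=w$, minimality only says that $p+r_k$ is a $\theta$-approximate subgradient of $f$ at $w$, and that does not even imply $D^-f(w)\neq\emptyset$. For instance, take $m=1$, $w=0$, $0<\theta<2$, $f(x)=-\frac{\theta}{2}|x|$, $g(x)=|x|$ and $p=0$. Then $\Phi_k(x,y)=\frac{\theta}{2}|x|+|y|+k|x-y|^2+|x|^2$, whose unique minimizer is $(0,0)$, so $x_k=w$, while $D^-f(0)=\emptyset$. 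Since Step 2 invokes Step 1 at arbitrary points $z_j$ with $\theta=1/j$, this case cannot be excluded.

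So the smoothing you mention at the end is necessary, not an optional alternative, and its parameter needs care. With $\psi(x)=\theta\sqrt{|x-w|^2+\sigma^2}$ you do get $|D\psi|<\theta$ everywhere. However, $w$ is no longer a minimizer, and comparing with $\Phi_k(w,w)$ only yields $\frac{\theta}{2}|x_k-w|\le\theta\sigma+\frac{K^2}{4k}$, i.e.\ $|x_k-w|\le 2\sigma+O(1/(\theta k))$. Interiority in $\overline{B(w,\rho')}^2$ then requires $\sigma$ small relative to $\rho'=\rho'(\theta)$, say $\sigma<\rho'/4$, and also $\sigma<1/(4j)$ in Step 2. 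Your choice $\sigma=\theta^2$ does not guarantee this, because $\rho'(\theta)$ can be much smaller than $\theta^2$. For example, if $f+g=-|x-w|^2/\delta$ near $w$ with $\delta\ll\theta$, the strict minimum holds only for $|x-w|<\theta\delta$, and the minimizer of the smoothed functional can sit on $\partial B(w,\rho')$, where freezing gives no subgradient information.

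A cleaner repair is to replace $p\cdot(x-w)-\theta|x-w|$ by a $C^1$ function $\varphi$ with $D\varphi(w)=p$ such that $f+g-\varphi$ has a strict local minimum at $w$; such a $\varphi$ exists for any Fr\'echet subgradient in $\R^m$. Then the minimizers converge to $(w,w)$ exactly as in your penalization estimate. The error vector becomes $D\varphi(x_k)-p-2(x_k-w)\to0$, and the rest of your argument goes through unchanged.
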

We use this lemma in the finite-dimensional variables \((y,s)\in Y^\perp\times \R \cong\R ^{d+1}\). The following lemma allows us to apply the viscosity inequality when a contact point occurs at the terminal time $t=T$.

\begin{lem}
\label{lem:terminal-contact}
Let $w$ be a viscosity subsolution of
\[
w_t+F(x,Dw)=0\qquad\text{in }V\times(0,T+\rho)
\]
for some $\rho>0$.  Suppose that $\phi\in C^1(V\times(0,T])$ and that $w-\phi$ has a strict maximum at $(x_0,T)$ relative to $V\times(0,T]$.  Then
\[
\phi_t(x_0,T)+F(x_0,D\phi(x_0,T))\le0.
\]
The analogous assertion holds for supersolutions touched from below.
\end{lem}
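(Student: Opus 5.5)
The idea is to perturb the test function so that the strict maximum at the terminal time $T$ is pushed into the open interval $(0,T+\rho)$, where the subsolution inequality of $w$ is available. The argument is the standard one from finite dimensions: add a penalty $\delta/(T-t)$-type term, or rather one that is finite at $t=T$. The only new ingredient is that maxima need not exist in $V$, so I would produce approximate maximizers with Stegall's variational principle, exactly as in the proof of Lemma~\ref{lem:time-slice}.

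Concretely, I first extend $\phi$ to a $C^1$ function on $V\times(0,T+\rho)$. Then I replace $\phi$ by $\phi+\|x-x_0\|^2$, which leaves its derivative at $(x_0,T)$ unchanged, so that the maximum of $w-\phi$ is quantitatively strict. To make sense of the extension, I interpret $\phi\in C^1(V\times(0,T])$ as one-sided differentiable in $t$ at $T$, and extend linearly in $t$: for $t>T$ set $\phi(x,t):=\phi(x,T)+\phi_t(x,T)(t-T)$. For $\beta>0$ I then consider
\[
\Phi_\beta(x,t):=w(x,t)-\phi(x,t)-\frac{\beta}{T+\rho'-t}
\]
on a closed set $E=\overline{B(x_0,r)}\times[T-\tau,T+\rho']$ with $\rho'<\rho$. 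An alternative to the $\beta$ penalty is the term $(t-T)_+^2/\beta$, which penalizes leaving $(0,T]$ while keeping the maximum near $T$. I prefer this second choice, since it is $C^1$ and does not alter $\phi_t$ at $T$.

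With the penalty $(t-T)_+^2/\beta$, as $\beta\to0$ the contribution from times $t>T$ is controlled: strictness, plus continuity of $w$ up to $t=T+\rho$, shows that near-maximizers have $t$ close to $T$ and $x$ close to $x_0$, and that $(t-T)_+^2/\beta\to0$. The boundary of $E$ is then uniformly worse than the value at $(x_0,T)$ by some $\sigma>0$, independent of small $\beta$. Stegall's principle, applied on $E$, gives linear perturbations $\langle p_\beta,x\rangle+k_\beta t$ of size $O(\sigma)$, small enough that the maximum point $(x_\beta,t_\beta)$ lies in $\operatorname{int}E\subset V\times(0,T+\rho)$. The subsolution inequality at $(x_\beta,t_\beta)$ then reads
\[
\phi_t(x_\beta,t_\beta)+\tfrac{2}{\beta}(t_\beta-T)_++k_\beta+F\bigl(x_\beta,D\phi(x_\beta,t_\beta)+2(x_\beta-x_0)+p_\beta\bigr)\le0.
\]
Dropping the nonnegative penalty term, letting $\beta\to0$ with $p_\beta,k_\beta\to0$ and $(x_\beta,t_\beta)\to(x_0,T)$, and using continuity of $F$, $\phi_t$ and $D\phi$ (the extension is $C^1$ across $T$), gives the claim. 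The supersolution case is symmetric, but there the sign of the penalty term goes the wrong way. I would handle it by using instead the penalty $-(t-T)_+^2/\beta$ together with the one-sided structure, or by applying the lemma to $-w$ with the Hamiltonian $-F(x,-p)$.

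The main obstacle is showing that the near-maximizers converge to $(x_0,T)$ despite the lack of compactness. I would get this from the quadratic strictness $-\|x-x_0\|^2$, from uniform continuity of $w$ in $t$ (available since $w\in\mathrm{BUC}$), and from comparing the maximal value with the value at $(x_0,T)$. This mirrors Step 4 of the proof of Theorem~\ref{thm1}.
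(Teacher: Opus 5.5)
Your route differs from the paper's, and one step fails as written: the extension of $\phi$ beyond $T$. For $t>T$, the $x$-derivative of $\phi(x,T)+\phi_t(x,T)(t-T)$ requires $x\mapsto\phi_t(x,T)$ to be differentiable. For $\phi\in C^1$ this map is only continuous. For example, $\phi(x,t)=(T-t)\sqrt{\|x\|^2+(T-t)^2}$ lies in $C^1(V\times(0,T])$ with $\phi_t(x,T)=-\|x\|$. Your extension is then $-(t-T)\|x\|$ for $t>T$, which is not differentiable at $x=0$, so at a maximizer with $t_\beta\ge T$ you have no admissible test function.

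The standard repair is reflection: set $\tilde\phi(x,t):=2\phi(x,T)-\phi(x,2T-t)$ for $T\le t<\min(2T,T+\rho)$. Its $t$- and $x$-derivatives coincide with those of $\phi$ on $\{t=T\}$, so $\tilde\phi$ is $C^1$ across $T$. With this change, and with $w$ uniformly continuous, your argument goes through. Your worry about the supersolution case is unfounded. Minimizing $w-\tilde\phi+(t-T)_+^2/\beta$ gives $\tilde\phi_t+k_\beta+F\ge 2(t_\beta-T)_+/\beta\ge 0$, so the penalty again has the favorable sign. Your reduction to $-w$ with $-F(x,-p)$ is also valid.

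The paper never leaves $\{t<T\}$. It adds the barrier $\eta/(T-t)$ and applies Stegall's principle on $C_\eta=\overline{B(x_0,r)}\times[T-r,T-\eta^2]$. The barrier equals $1/\eta$ on the top face, so the perturbed maximizer is interior with $t_\eta<T$. The singular term $\eta/(T-t_\eta)^2$ then enters the time derivative with a nonnegative sign and is dropped. This needs no extension of $\phi$ and no information on $w$ beyond $T$; the hypothesis on $(T,T+\rho)$ is not even used. It also needs no uniform continuity.

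Your route must force near-maximizers with $t>T$ back to $t\approx T$. For that you need $\sup_{x\in\overline{B(x_0,r)}}\bigl[(w-\tilde\phi)(x,t)-(w-\phi)(x,T)\bigr]$ to have $\limsup$ at most $0$ as $t\downarrow T$. In infinite dimensions, continuity of $w$ at $(x_0,T)$ does not give this on a ball of fixed radius. So you genuinely need the $\mathrm{BUC}$ assumption you invoke, which holds for $u^\varepsilon$ in the application but is not a hypothesis of the lemma. What your version gains is a $C^1$ penalty with no singular term. The cost is a $C^1$ extension of $\phi$ and uniform continuity of $w$, neither of which the barrier argument requires.
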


\begin{proof} We may assume that, near \((x_0,T)\),
\begin{equation*}
    w(x,t) - \phi(x,t)\leq w(x_0,T) - \phi(x_0, T) - \|x - x_0\|^2 - |t - T|^2.
\end{equation*}
Choose $0<r<T$ and $\sigma>0$ such that 
$$
    w(x,t) - \phi(x,t)\leq w(x_0,T) - \phi(x_0, T)-4\sigma
$$
on the lateral and lower boundary of
$$
    C:=\overline {B(x_0,r)} \times[T-r,T].
$$

For $\eta>0$, define
$$
    \Phi_\eta(x,t) := w(x,t) - \phi(x,t) -\frac{\eta}{T-t}, \qquad t<T.
$$
Let
$$
\rho_\eta:=\eta^2, \qquad z_\eta:=(x_0,T-\sqrt{\eta})
$$
and consider
$$
C_\eta := \overline {B(x_0,r)} \times[T-r,T-\rho_\eta].
$$
For sufficiently small $\eta$, we have $z_\eta\in\operatorname{int}C_\eta$, and
$$
    \Phi_\eta(z_\eta) \rightarrow w(x_0,T) - \phi(x_0,T) \qquad \text{as} \qquad \eta \to 0.
$$
Hence,
$$
    \Phi_\eta(z_\eta)\geq w(x_0,T) - \phi(x_0,T)-\sigma
$$
for all sufficiently small $\eta$.

On the lateral and lower boundary of $C_\eta$, 
$$
\Phi_\eta(x,t)\leq w(x_0,T) - \phi(x_0, T)-4\sigma.
$$
On the upper boundary $t=T-\rho_\eta$, we have
$$
\Phi_\eta(x,T-\rho_\eta)
= w(x,T-\rho_\eta) - \phi(x,T - \rho_\eta)-\frac{1}{\eta}
\leq
w(x_0,T) - \phi(x_0,T) -4\sigma
$$
for sufficiently small $\eta$. Therefore, for sufficiently small $\eta$,
$$
\sup_{\partial C_\eta} \Phi_\eta
\leq
w(x_0,T)-\phi(x_0,T)-4\sigma.
$$

By the Radon--Nikodym property, there exist $p_\eta\in V$ and $k_\eta\in\mathbb R$ such that
$$
\|p_\eta\|+|k_\eta|\to0 \qquad \text{as}\qquad \eta \to 0
$$
and
$$
(x,t)\longmapsto
\Phi_\eta(x,t)
-\langle p_\eta,x-x_0\rangle
-k_\eta(t-T)
$$
attains its maximum on $C_\eta$ at some point $(x_\eta,t_\eta)$. Choose the perturbation sufficiently small that its oscillation on $C_\eta$ is less than $\sigma$. Then $(x_\eta,t_\eta)\in\operatorname{int}C_\eta$.

By the maximality of $(x_\eta,t_\eta)$, comparing with $z_\eta$ gives
$$
w(x_\eta,t_\eta) - \phi(x_\eta,t_\eta)
\geq
w(z_\eta)-\phi(z_\eta)-\sqrt{\eta}-o_\eta(1),
$$
where the error tends to zero as $\eta\to0$. Since $z_\eta\to(x_0,T)$, it follows that
$$
\liminf_{\eta\downarrow0}w(x_\eta,t_\eta) - \phi(x_\eta,t_\eta)
\geq w(x_0,T) - \phi(x_0,T).
$$
On the other hand, we have
$$
w(x_\eta,t_\eta) - \phi(x_\eta,t_\eta)
\leq w(x_0,T) - \phi(x_0,T)
-\|x_\eta-x_0\|^2 -|t_\eta-T|^2.
$$
Consequently, $(x_\eta,t_\eta) \to (x_0,T)$ as $\eta \to 0$.

The viscosity subsolution inequality gives
$$
    \phi_t(x_\eta,t_\eta) + \frac{\eta}{(T-t_\eta)^2}
    + k_\eta + F\bigl(x_\eta,D\phi(x_\eta,t_\eta)+p_\eta\bigr) \leq0.
$$
Dropping the nonnegative singular term and letting $\eta\downarrow0$, we obtain
$$
\phi_t(x_0,T)+F\bigl(x_0,D\phi(x_0,T)\bigr)\leq0.
$$
The supersolution statement follows by reversing the signs.
\end{proof}
We now prove the quantitative convergence estimate.
\begin{proof}[Proof of Theorem~1.2.]
    We prove the upper bound
    \begin{equation*}
        \sup_{V \times [0,T]} u^\e (x,t) - \widetilde u(\mathfrak m(x),t) \le C_T\e ^{1/3}.
    \end{equation*}
    The lower bound is obtained by a symmetric argument. Throughout the proof, \(C>0\) denotes a constant that may change from line to line and may depend on \(H\), \(u_0\), \(d\), and the fixed time horizon \(T\), but is independent of $\e$ and other parameters in the proof.
    
    \medskip
    \noindent
    \textbf{Step 1. Doubling the variables.}
    Fix parameters
    \[
            \beta,\theta, \del\in(0,1),\qquad \lambda=\e ^\theta , \qquad K > 0
    \]
    with $\theta+\beta<1$. The particular choice will be made at the end.
    
    For \(x\in V\), \(y\in \R^d\), and \(t,s\in[0,T]\), define
    \begin{align*}
    \Phi(x,y,t,s) &:=u^\e (x,t)- \widetilde u(y,s)
            -\e  v^\lambda\left(\frac{x}{\e };
            \frac{\mathfrak m(x)-y}{\e ^\beta}\right)      \\
    &\quad
            -\frac{|\mathfrak m(x)-y|^2+|t-s|^2}{2\e ^\beta}
            -K(t+s)-\delta|y|^2.
    \end{align*}
    Also, we may restrict \(x\) to representatives satisfying
    \[
            \|x-Mx\|\le C_1\e
    \]
    where $C_1$ is the constant in Lemma~\ref{lem:epsilon-decomposition}. By Proposition~\ref{prop:compactness} and $\Lam$-periodicity of $v^\lam$, this restriction does not change the supremum of \(\Phi\). 
    
    The linear growth estimate for $\overline H$, together with Lemma~\ref{lem4.2}, implies
    \[
    -\varepsilon v^\lambda\left(
    \frac{x}{\varepsilon};P \right)
    \leq
    C\varepsilon^{1-\theta}(1+|P|), \qquad P=\frac{\mathfrak m(x)-y}{\varepsilon^\beta}.
    \]
    Consequently,
    \begin{align*}
        \Phi(x,y,t,s) \leq
        C+C\varepsilon^{1-\theta}
        +C\varepsilon^{1-\theta-\beta}| \mathfrak m(x)-y|
        -\frac{|\mathfrak m(x)-y|^2}{2\varepsilon^\beta}
        -\delta|y|^2.
    \end{align*}
    Because \(\theta+\beta<1\), the negative quadratic term dominates the linear term in \(|\mathfrak m(x)-y|\). Thus \(\Phi\) is bounded above.
    
    Let \((x_n,y_n,t_n,s_n)\) be a maximizing sequence for \(\Phi\). Namely,
    \[
        \Phi(x_n,y_n,t_n,s_n)\to\sup_{V \times \R^d \times [0,T]^2}\Phi, \qquad \|x_n-Mx_n\|\le C_1\e .
    \]

    Comparison with \(\Phi(0,0,0,0)\), together with Lemma~\ref{lem4.2}, gives
    \begin{equation*}
        \delta|y_n|^2 +
        \frac{|\mathfrak m(x_n)-y_n|^2+|t_n-s_n|^2}{2\varepsilon^\beta}
        \leq
        C\bigl(1+\varepsilon^{1-\theta}\bigr)
        +C\varepsilon^{1-\theta-\beta} |\mathfrak m(x_n)-y_n|+o_n(1).
    \end{equation*}
    Young's inequality yields
    \begin{equation}\label{eq:s4_7}
    |y_n|\leq C\delta^{-1/2}+o_n(1).
    \end{equation}
    
    Next, comparing \(\Phi(x_n,y_n,t_n,s_n)\) with \(\Phi(x_n,\mathfrak m(x_n),t_n,t_n)\), we obtain
    \begin{align*}
        \frac{|\mathfrak m(x_n)-y_n|^2+|t_n-s_n|^2}
             {2\varepsilon^\beta}
        &\leq
        C|\mathfrak m(x_n)-y_n|+C|t_n-s_n|
        +C\varepsilon^{1-\theta-\beta}|\mathfrak m(x_n)-y_n|
        \\
        &\quad
        +\delta\bigl(|\mathfrak m(x_n)|^2-|y_n|^2\bigr)+o_n(1).
    \end{align*} 
    Since $\delta\bigl(|\mathfrak m(x_n)|^2-|y_n|^2\bigr)
        \leq
        \delta|\mathfrak m(x_n)-y_n|^2
        +C\delta^{1/2}|\mathfrak m(x_n)-y_n|$,
    \begin{equation}\label{eq:s4_8}
        |\mathfrak m(x_n)-y_n|+|t_n-s_n|
        \leq
        C\varepsilon^\beta+o_n(1).
    \end{equation}
    In particular,
    \[
        P_n:=\frac{\mathfrak m(x_n)-y_n}{\varepsilon^\beta}
    \]
    is uniformly bounded. 
    By~\eqref{eq:s4_7}--\eqref{eq:s4_8},
    \[
        \|x_n\| \leq
        \|x_n-Mx_n\|+|\mathfrak m(x_n)-y_n|+|y_n| \leq
        C\bigl(\varepsilon+\varepsilon^\beta+\delta^{-1/2}\bigr)+o_n(1).
    \]
    Thus the maximizing sequence is bounded.
    
    \medskip
    \noindent
    \textbf{Step 2. Find the maximizer using the Radon--Nikodym property.}
     Fix \(\gamma_1>0\) and define
    \begin{multline*}
        S_n:=\big\{(x,y,t,s)\in V\times \R^d\times[0,T]^2: \|x-x_n\|^2+|y-y_n|^2
        +|t-t_n|^2+|s-s_n|^2\le \gamma_1^2 \big\}.
    \end{multline*}
    Let
    \[
        d_n:=\sup\Phi-\Phi(x_n,y_n,t_n,s_n),
    \]
    and set
    \[
        \Phi_n (x,y,t,s) = \Phi (x,y,t,s) -\frac{2d_n}{\gamma_1^2} \left(
        \|x-x_n\|^2+\|y-y_n\|^2+|t-t_n|^2+|s-s_n|^2 \right).
    \]
    By the Radon--Nikodym property, there exist
    \[
        p_n\in V,\qquad q_n\in \R^d,\qquad a_n, b_n\in\R ,
    \]
    with
    \[
        \|p_n\|+|q_n|+|a_n|+|b_n|\le \frac{d_n}{2\gamma_1},
    \]
    such that
    \[
    (x,y,t,s) \mapsto \Phi_n(x,y,t,s)-\langle p_n,x\rangle-\langle q_n,y\rangle-a_n t- b_n s
    \]
    attains its maximum over \(S_n\) at some $ (\hat x_n,\hat y_n,\hat t_n,\hat s_n)$. By the quadratic penalty in $\Phi_n$ and the bound on the perturbation, the defining localization inequality for $S_n$ is strict at this maximizer, although $\hat t_n=T$ or $\hat s_n=T$ may still occur.
    Consequently, from \eqref{eq:s4_7}--\eqref{eq:s4_8},
    \begin{equation} \label{eq:close-hat}
            |\mathfrak m(\hat x_n)-\hat y_n|+|\hat t_n-\hat s_n|
            \le C(\e ^\beta+\gamma_1) + o_n(1),
    \end{equation}
    and
    \begin{equation} \label{eq:y-hat-bound}
            |\hat y_n| \le C(\delta^{-1/2}+\gamma_1)+ o_n(1).
    \end{equation}
    We choose \(\gamma_1=o(\e ^\beta)\), so that
    \[
       \widehat P_n := \frac{\mathfrak m(\hat x_n)-\hat y_n}{\e ^\beta}
    \]
    remains uniformly bounded.

    We claim that, when $K=A\varepsilon^{1/3}$ with $A$ sufficiently large, the two time variables $\hat t_n, \hat s_n$ cannot both remain uniformly away from $0$. Suppose, to the contrary, that $\hat t_n, \hat s_n\geq\tau$ for some $\tau>0$ after passing to a subsequence.
    
    \medskip
    \noindent
    \textbf{Step 3. Separating the fast variable.}
    Fix $\alpha, \kappa >0$ and define
    \begin{align*}
        \Psi_n(x,\xi,z,t)
        :=
        &\Phi_n(x,\hat y_n,t,\hat s_n)-\langle p_n,x\rangle - a_n t\notag\\
        &+\e\Biggl[
        v^\lambda\!\left(\frac{x}{\e};
        \frac{\mathfrak m(x)-\hat y_n}{\e^\beta}\right)
        -
        v^\lambda\!\left(\xi;
        \frac{\mathfrak m(z)-\hat y_n}{\e^\beta}\right)
        \Biggr]\notag\\
        &-\frac{\|x-\e\xi\|^2+\|x-z\|^2}{2\alpha} - \kappa \left(\|x - \hat x_n\|^2 + |t - \hat t_n|^2 \right).
    \label{eq:Psi-rate}
    \end{align*}
    on a bounded neighborhood $U_n$ of \((\hat x_n,\hat x_n/\e ,\hat x_n,\hat t_n)\), chosen so that
    \[
        (x,\hat y_n,t,\hat s_n)\in S_n, \qquad t > \tau/2.
    \]
    Choose a maximizing sequence $(x_n^m,\xi_n^m,z_n^m,t_n^m) \in U_n$ such that
    \[
        \Psi_n(x_n^m,\xi_n^m,z_n^m,t_n^m) \uparrow \sup_{U_n} \Psi_n
    \qquad\text{as}\qquad m\to\infty,
    \]
    Comparison with $(x_n^m,\xi_n^m,x_n^m,t_n^m)$ and $(x_n^m,x_n^m/\e,z_n^m,t_n^m)$, with Lemma~\ref{lem4.2}, yields
    \begin{equation}\label{eq:x-z-close-rate}
        \|x_n^m-z_n^m\| \le C \alpha \e^{1-\beta-\theta} + o_m(1) \quad \text{and } \quad \|x_n^m-\e\xi_n^m\| \le C\alpha+o_m(1).
    \end{equation}
    Comparison with $(\hat x_n, \hat x_n/\e,\hat x_n, \hat t_n)$, with the maximality of $(\hat x_n, \hat y_n, \hat t_n, \hat s_n)$, yields
    \begin{equation}\label{eq:s4_12}
    \kappa \left(\|x_n^m-\hat x_n\|^2
    +
    |t_n^m-\hat t_n|^2
    \right) \leq o_m(1)+o_\alpha(1).
    \end{equation}
    By~\eqref{eq:x-z-close-rate} and \eqref{eq:s4_12}, letting first $m \to \infty$ and then $\alpha \downarrow 0$, we obtain
    \begin{equation}\label{eq:s4_13}
        (x_n^m,\xi_n^m,z_n^m,t_n^m) \to \left(\hat x_n,\hat x_n/\e,\hat x_n,\hat t_n\right).
    \end{equation}
    
    We localize once more near $(x_n^m,\xi_n^m,z_n^m,t_n^m)$. Fix $\gamma_2>0$ and set
    \[
        S_n^m
        :=
        \Bigl\{
        (x,\xi,z,t):
        \|x-x_n^m\|^2+\e^2\|\xi-\xi_n^m\|^2+\|z-z_n^m\|^2+|t-t_n^m|^2
        \le \gamma_2^2
        \Bigr\}.
    \]
    Shrinking $\gamma_2$ if necessary, we assume that $S_n^m\subset U_n$. Let
    \[
        d_n^m := \sup \Psi_n - \Psi_n(x_n^m,\xi_n^m,z_n^m,t_n^m),
    \]
    and define
    \begin{align*}
        \Psi_n^m(x,\xi,z,t) := \;& \Psi_n(x,\xi,z,t) \\
        &\quad -\frac{2d_n^m}{\gamma_2^2} \bigl( \|x-x_n^m\|^2+\e^2\|\xi-\xi_n^m\|^2+\|z-z_n^m\|^2+|t-t_n^m|^2 \bigr)
    \end{align*}
    on $S_n^m$. Again by Stegall's theorem, there exist
    \[
        p_n^m,q_n^m,w_n^m\in V,\qquad a_n^m\in\R ,
    \]
    such that
    \[
        \|p_n^m\|+\frac{\|q_n^m\|}{\e } +\|w_n^m\|+|a_n^m| \le \frac{d_n^m}{2\gamma_2},
    \]
    and
    \[
        (x,\xi,z,t)\mapsto
        \Psi_n^m(x,\xi,z,t)
        -\langle p_n^m,x\rangle
        -\langle q_n^m,\xi\rangle
        -\langle w_n^m,z\rangle
        -a_n^m t
    \]
    attains its maximum over $S_n^m$ at some $(\tilde x_n^m,\tilde \xi_n^m,\tilde z_n^m, \tilde t_n^m)$. By the quadratic penalty in $\Psi_n^m$ and the bound on the perturbation, the defining localization inequality for $S_n^m$ is strict at this maximizer, although $\widetilde t_n^m=T$ may still occur.
    
    \medskip
    \noindent
    \textbf{Step 4. The two viscosity inequalities.}
    If \(\tilde t_n^m < T\), we apply the ordinary viscosity subsolution test for \(u^\varepsilon\). If \(\tilde t_n^m=T\), we apply Lemma~\ref{lem:terminal-contact} after adding quadratic terms to the test function to make the contact strict. In either case, 
    \begin{multline}\label{eq:sub-u-e-rate}
        \frac{\tilde t_n^m-\hat s_n}{\e^\beta}
        +K+\mathcal{E}^0_{nm}
        +H\!\left(
        \frac{\tilde x_n^m}{\e},
        \frac{\mathfrak m(\tilde x_n^m)-\hat y_n}{\e^\beta}\chi_I
        +\frac{\tilde x_n^m-\e\tilde\xi_n^m}{\alpha}
        +\frac{\tilde x_n^m-\tilde z_n^m}{\alpha}
        +\mathcal{E}^1_{nm}
        \right)\le 0,
    \end{multline}
    where
    \begin{align*}
        \mathcal{E}^0_{nm}
        &:=
        \frac{4d_n}{\gamma_1^2}(\tilde t_n^m-t_n)+a_n
        +\frac{4d_n^m}{\gamma_2^2}(\tilde t_n^m-t_n^m)+a_n^m + 2\kappa (\tilde t_n^m - \hat t_n),\\ 
        \mathcal{E}^1_{nm}
        &:=
        \frac{4d_n}{\gamma_1^2}(\tilde x_n^m-x_n)+p_n
        +\frac{4d_n^m}{\gamma_2^2}(\tilde x_n^m-x_n^m)+p_n^m + 2\kap (\tilde x^m_n - \hat x_n).
    \end{align*}
    On the other hand, since $\tilde\xi_n^m$ is a local minimizer of the discounted corrector, the viscosity supersolution test for $v^\lambda$ gives
    \begin{multline}
        \lambda
        v^\lambda\!\left(
        \tilde\xi_n^m;\frac{\mathfrak m(\tilde z_n^m)-\hat y_n}{\e^\beta}
        \right)
        +H\!\left( \tilde\xi_n^m,
        \frac{\mathfrak m(\tilde z_n^m)-\hat y_n}{\e^\beta}\chi_I
        +\frac{\tilde x_n^m-\e\tilde\xi_n^m}{\alpha}
        +\mathcal{E}^2_{nm}
        \right)\ge 0,
    \label{eq:sup-vlambda-rate}
    \end{multline}
    where
    \[
        \mathcal{E}^2_{nm}
        :=
        -\frac{4d_n^m \e}{\gamma_2^2}(\tilde\xi_n^m-\xi_n^m)
        -\frac{q_n^m}{\e}.
    \]
    By construction, with \(\e \) fixed,
    \begin{equation}\label{eq:s4_16}
        \mathcal{E}^0_{nm}\to 0,\qquad \mathcal{E}^1_{nm}\to 0,\qquad \mathcal{E}^2_{nm}\to 0
    \end{equation}
    in the sequential limits
    \[
        m\to\infty, \qquad
        \gamma_2\downarrow0, \qquad
        \alpha\downarrow0, \qquad
        n\to\infty, \qquad
        \gamma_1\downarrow0, \qquad
        \delta\downarrow0.
    \]
    
    Subtracting \eqref{eq:sup-vlambda-rate} from \eqref{eq:sub-u-e-rate}, using Lemma~\ref{lem4.1}, \eqref{eq:x-z-close-rate}, and \eqref{eq:s4_16}, we obtain
    \begin{equation*}
        K+\frac{\tilde t_n^m-\hat s_n}{\e ^\beta}
        -\lambda v^\lambda\left(
        \tilde\xi_n^m;
        \frac{\mathfrak m(\tilde z_n^m)-\hat y_n}{\e ^\beta}
        \right)
        \le
        C\e ^{1-\theta-\beta}
        +o_m(1)+o_{\gamma_2}(1)+o_\alpha(1)+o_n(1).
    \end{equation*}
    Letting $m\to\infty, \gamma_2\downarrow0, \alpha\downarrow0$ sequentially, by \eqref{eq:s4_13}, we get
    \begin{equation*}
        K+\frac{\hat t_n-\hat s_n}{\e ^\beta}
        -\lambda v^\lambda\left(
        \frac{\hat x_n}{\e };
        \frac{\mathfrak m(\hat x_n)-\hat y_n}{\e ^\beta}
        \right)
        \le
        C\e ^{1-\theta-\beta}+o_n(1).
    \end{equation*}
    By \eqref{eq:vlambda-ergodic-sec4} in Lemma~\ref{lem4.2} and the boundedness of $\widehat P_n$, we obtain
    \begin{equation}\label{eq:s4_17}
        K+\frac{\hat t_n-\hat s_n}{\e ^\beta} +\overline H \left(\frac{\mathfrak m(\hat x_n)-\hat y_n}{\e ^\beta} \right)
        \le C\left(\e ^\theta+\e ^{1-\theta-\beta}\right) +o_n(1).
    \end{equation}

    \medskip
    \noindent
    \textbf{Step 5. The supersolution test for the effective limit.}
    Define
    \[
        w_n(y)
        :=
        \varepsilon
        v^\lambda\left(
        \frac{\hat x_n}{\varepsilon};
        \frac{\mathfrak m(\hat x_n)-y}{\varepsilon^\beta}
        \right).
    \]
    By Lemma~\ref{lem4.2},
    \begin{equation}\label{eq:s4_18}
        \operatorname{Lip}(w_n)
        \leq
        C\varepsilon^{1-\theta-\beta}.
    \end{equation}
    
    Set
    \begin{align*}
        G_n(y,s)
        &:=
        \frac{|\mathfrak m(\hat x_n)-y|^2
              +|\hat t_n-s|^2}
             {2\varepsilon^\beta}
        +Ks+\delta|y|^2
        \\
        &\quad
        +\frac{2d_n}{\gamma_1^2}
        \left(
        |y-y_n|^2+|s-s_n|^2
        \right)
        +\langle q_n,y\rangle+b_ns.
    \end{align*}
    Then
    \[
        (y,s)\longmapsto
        \widetilde u(y,s)+w_n(y)+G_n(y,s)
    \]
    has a local minimum at \((\hat y_n,\hat s_n)\).
    
    If $\hat s_n<T$, we apply Lemma~\ref{lem4.3} in \(\mathbb R^d\times\mathbb R\) to
    \[
        f(y,s):=w_n(y),
        \qquad
        g(y,s):=\widetilde u(y,s)+G_n(y,s).
    \]
    There exists \(r_n\in \overline D^-w_n(\hat y_n)\) such that
    \[
        (-r_n,0)
        \in
        \overline D^-\bigl(\widetilde u+G_n\bigr)
        (\hat y_n,\hat s_n).
    \]
    Since \(G_n\) is smooth,
    \[
        \left(
        -r_n-D_yG_n(\hat y_n,\hat s_n),
        -D_sG_n(\hat y_n,\hat s_n)
        \right)
        \in
        D^- \widetilde u(\hat y_n,\hat s_n).
    \]
    Moreover, by~\eqref{eq:s4_18},
    \begin{equation}\label{eq:s4_19}
        \|r_n\|
        \leq
        C\varepsilon^{1-\theta-\beta}.
    \end{equation}
    
    Using the viscosity supersolution test for $\widetilde u$, we obtain
    \begin{align*}
        0 &\leq \frac{\hat t_n-\hat s_n}{\varepsilon^\beta} -K
        -\frac{4d_n}{\gamma_1^2}(\hat s_n-s_n)
        -b_n +
        \overline H\left(
        \widehat P_n
        -2\delta\hat y_n-r_n
        -\frac{4d_n}{\gamma_1^2}(\hat y_n-y_n)
        -q_n
        \right).
    \end{align*}
   
   Using the global Lipschitz continuity of \(\overline H\), together with \eqref{eq:s4_7} and \eqref{eq:s4_19},
    \begin{equation}\label{eq:s4_20}
        -K
        +
        \frac{\hat t_n-\hat s_n}{\varepsilon^\beta}
        +
        \overline H\left(
        \frac{\mathfrak m(\hat x_n)-\hat y_n}{\varepsilon^\beta}
        \right)
        \geq
        -C\left(
        \delta^{1/2}+\varepsilon^{1-\theta-\beta}
        \right)
        -o_n(1)-o_{\gamma_1}(1).
    \end{equation}
    If $\hat s_n=T$, we first make the minimum strict by adding a positive quadratic term. For $\eta>0$, we then consider
    \[
    \widetilde u(y,s)+w_n(y)+G_n(y,s)+\frac{\eta}{T-s},
    \qquad s<T,
    \]
    as in the proof of Lemma~\ref{lem:terminal-contact}. This function has a local minimizer $(y_{n,\eta},s_{n,\eta})$ converging to $(\hat y_n,T)$. We apply Lemma~\ref{lem4.3} at this interior point. The supersolution inequality contains the additional term
    \[
    -\frac{\eta}{(T-s_{n,\eta})^2}.
    \]
    This term has the favorable sign and may be discarded. Letting $\eta\downarrow0$ therefore gives~\eqref{eq:s4_20}.
    
    \medskip
    \noindent
    \textbf{Step 6. Conclusion.} Combining \eqref{eq:s4_17} and \eqref{eq:s4_20}, we obtain
    \[
        2K \le C\left( \delta^{1/2} +\e ^\theta +\e ^{1-\theta-\beta}\right) +o_n(1)+o_{\gamma_1}(1).
    \]
    Taking successively $n\to\infty$ and $\gamma_1\downarrow0$, we obtain
    \[
        2K\le C\bigl(\delta^{1/2}
        +\varepsilon^\theta+\varepsilon^{1-\theta-\beta}\bigr).
    \]
    Choose $\theta=\beta=1/3$ and $K=A\varepsilon^{1/3}$. For sufficiently large $A>0$, choosing $\delta>0$ sufficiently small yields a contradiction. Hence, up to a subsequence, either \(\hat t_n\to0\) or \(\hat s_n\to0\). By \eqref{eq:close-hat}, 
    \begin{equation*}
        \hat t_n + \hat s_n \le C(\e^{1/3} + \gam_1) + o_n(1).
    \end{equation*}
    
    Using the Lipschitz continuity of $u^\e$ and $\widetilde u$ in time, together with \eqref{eq:close-hat} and \rm{(I1)--(I2)}, we obtain
    \begin{equation*}
        u^\e (\hat x_n,\hat t_n)-\widetilde u(\hat y_n,\hat s_n) \le C\e ^{1/3}+o_n(1)+o_{\gamma_1}(1).
    \end{equation*}
    Moreover, since $|\e  v^\lambda(\hat x_n/\e; \widehat P_n )|\le C\e ^{2/3}$,
    \[
        \Phi(\hat x_n,\hat y_n,\hat t_n,\hat s_n)\le C\e ^{1/3}+C\e^{2/3} +o_n(1)+o_{\gamma_1}(1).
    \]
    Therefore,
    \begin{equation}\label{eq:s4_21}
        \sup \Phi \le C\e^{1/3}.
    \end{equation}
    Evaluating $\Phi$ at $(x,\mathfrak m(x), t,t)$ and using \eqref{eq:s4_21}, we obtain
    \begin{align*}
        u^\varepsilon(x,t)-\widetilde u(\mathfrak m(x),t)
        &\leq
        C_T\varepsilon^{1/3}
        +
        \varepsilon v^\lambda(x/\varepsilon;0)
        +
        2Kt
        +
        \delta|\mathfrak m(x)|^2.
    \end{align*}
    By Lemma~\ref{lem4.2}, we have $|\varepsilon v^\lambda(x/\varepsilon;0)|\leq C\varepsilon^{2/3}$. Therefore, letting \(\delta\downarrow0\) gives
    \[
        u^\varepsilon(x,t)-\widetilde u(\mathfrak m(x),t)
        \leq
        C_T\varepsilon^{1/3}.
    \]
    
    The reverse inequality is obtained by applying the same argument to
    \[
    \begin{aligned}
        \Psi(x,y,t,s)
        :=\;&
        \widetilde u(y,s)-u^\e (x,t)
        +\e v^\lambda\left(
            \frac{x}{\e };
            \frac{y-\mathfrak m(x)}{\e ^\beta}
        \right)\\
        &-\frac{|\mathfrak m(x)-y|^2+|t-s|^2}{2\e ^\beta}
        -K(t+s)-\delta|y|^2.
    \end{aligned}
    \]
    Combining the two estimates proves the theorem.
\end{proof}

Theorem~\ref{thm2} completes the homogenization analysis by strengthening the qualitative convergence of Theorem~1.1 to a quantitative estimate. The rate \(O(\e ^{1/3})\) agrees with that obtained in the finite-dimensional nonconvex periodic setting, but the present argument requires additional structural ingredients: the reduction to the mean configuration, the compact quotient \(SS^d\), and the use of the Radon--Nikodym property to compensate for the lack of local compactness in \(V\). Thus, although the limiting equation is finite-dimensional, the proof of convergence is genuinely infinite-dimensional because the cell problem and the oscillatory correctors are posed on the full configuration space modulo the symmetry structure.

\section*{Acknowledgments}
The author would like to thank Professor Hung V. Tran for his invaluable feedback and suggestions. The author is also grateful to Professors Diogo Gomes, Jin Feng, and Levon Nurbekyan for their helpful comments.

\bibliographystyle{naturemag}
\bibliography{references}

@article{CL83,
  title={Viscosity solutions of Hamilton--Jacobi equations},
  author={Crandall, Michael G and Lions, Pierre-Louis},
  journal={Transactions of the American Mathematical Society},
  volume={277},
  number={1},
  pages={1--42},
  year={1983}
}

@article{CEL84,
  title={Some properties of viscosity solutions of {Hamilton--Jacobi} equations},
  author={Crandall, Michael G and Evans, Lawrence C and Lions, P-L},
  journal={Transactions of the American Mathematical Society},
  volume={282},
  number={2},
  pages={487--502},
  year={1984}
}

@book{tran2021,
  title={{Hamilton--Jacobi} equations: theory and applications},
  author={Tran, Hung V},
  volume={213},
  year={2021},
  publisher={American Mathematical Soc.}
}

@article{Evans-perturb,
  title={Periodic homogenisation of certain fully nonlinear partial differential equations},
  author={Evans, Lawrence C},
  journal={Proceedings of the Royal Society of Edinburgh Section A: Mathematics},
  volume={120},
  number={3-4},
  pages={245--265},
  year={1992},
  publisher={Royal Society of Edinburgh Scotland Foundation}
}

@article{LPV,
  title={Homogenization of {Hamilton--Jacobi} equations},
  author={Lions, Pierre-Louis and Papanicolaou, George and Varadhan, Srinivasa RS},
  journal={Unpublished preprint},
  year={1987}
}

@article{Con96,
  title={Periodic homogenization of {Hamilton--Jacobi} equations: additive eigenvalues and variational formula},
  author={Concordel, Marie C},
  journal={Indiana University Mathematics Journal},
  volume = {45},
  number = {4},
  pages={1095--1117},
  year={1996},
  publisher={JSTOR}
}

@article{Con97,
  title={Periodic homogenisation of {Hamilton--Jacobi} equations: 2. Eikonal equations},
  author={Concordel, Marie C},
  journal={Proceedings of the Royal Society of Edinburgh Section A: Mathematics},
  volume={127},
  number={4},
  pages={665--689},
  year={1997},
  publisher={Royal Society of Edinburgh Scotland Foundation}
}

@article{capuzzo-rate,
  title={On the rate of convergence in homogenization of {Hamilton--Jacobi} equations},
  author={Capuzzo-Dolcetta, Italo and Ishii, Hitoshi},
  journal={Indiana University Mathematics Journal},
  volume={50},
  number={3},
  pages={1113--1129},
  year={2001},
  publisher={JSTOR}
}

@article{han2023rate,
  title={Rate of convergence in periodic homogenization for convex {Hamilton--Jacobi} equations with multiscales},
  author={Han, Yuxi and Jang, Jiwoong},
  journal={Nonlinearity},
  volume={36},
  number={10},
  pages={5279--5297},
  year={2023},
  publisher={IOP Publishing}
}

@article{TY,
    author = "Hung V Tran and Yifeng Yu",
     title = "Optimal convergence rate for periodic homogenization of convex {Hamilton--Jacobi} equations",
   journal = "Indiana Univ. Math. J.",
  fjournal = "Indiana University Mathematics Journal",
    volume = 74,
      year = 2025,
     issue = 3,
     pages = "555--573",
      issn = "0022-2518",
     coden = "IUMJAB",
   mrclass = "35B10, 35B27, 35B40, 35F21, 49L25",
}

@article{QTY,
  title={Min--max formulas and other properties of certain classes of nonconvex effective {Hamiltonians}},
  author={Qian, Jianliang and Tran, Hung V and Yu, Yifeng},
  journal={Mathematische Annalen},
  volume={372},
  number={1},
  pages={91--123},
  year={2018},
  publisher={Springer}
}

@article{mitake2019rate,
  title={Rate of convergence in periodic homogenization of {Hamilton--Jacobi} equations: the convex setting},
  author={Mitake, Hiroyoshi and Tran, Hung V and Yu, Yifeng},
  journal={Archive for Rational Mechanics and Analysis},
  volume={233},
  number={2},
  pages={901--934},
  year={2019},
  publisher={Springer}
}

@article{C-L-1,
  title={{Hamilton--Jacobi} equations in infinite dimensions I. Uniqueness of viscosity solutions},
  author={Crandall, Michael G and Lions, Pierre-Louis},
  journal={Journal of Functional Analysis},
  volume={62},
  number={3},
  pages={379--396},
  year={1985},
  publisher={Elsevier}
}

@article{C-L-2,
  title={{Hamilton--Jacobi} equations in infinite dimensions. II. Existence of viscosity solutions},
  author={Crandall, Michael G and Lions, Pierre-Louis},
  journal={Journal of Functional Analysis},
  volume={65},
  number={3},
  pages={368--405},
  year={1986},
  publisher={Academic Press}
}

@article{Ishii,
  author  = {Ishii, Hitoshi},
  title   = {Perron's method for {Hamilton--Jacobi} equations},
  journal = {Duke Mathematical Journal},
  volume  = {55},
  number  = {2},
  pages   = {369--384},
  year    = {1987},
  doi     = {10.1215/S0012-7094-87-05521-9}
}

@article{barbu,
  title={{Hamilton--Jacobi} equations in {Hilbert} spaces},
  author={Barbu, Viorel and Da Prato, Giuseppe},
  journal={Pitman, London},
  year={1983}
}

@article{cannarsa,
  title={Some results on non-linear optimal control problems and {Hamilton--Jacobi} equations in infinite dimensions},
  author={Cannarsa, P and Da Prato, G},
  journal={Journal of Functional Analysis},
  volume={90},
  number={1},
  pages={27--47},
  year={1990},
  publisher={Elsevier}
}

@article{stegall1978,
  title={Optimization of functions on certain subsets of {Banach} spaces},
  author={Stegall, Charles},
  journal={Mathematische Annalen},
  volume={236},
  number={2},
  pages={171--176},
  year={1978},
  publisher={Springer}
}

@article{Gomes-Nurbekyan,
  title={An infinite-dimensional weak {KAM} theory via random variables},
  author={Gomes, Diogo and Nurbekyan, Levon},
  journal={Discrete and Continuous Dynamical Systems},
  volume={36},
  number={11},
  pages={6167--6185},
  year={2016}
}

@article{Gangbo-1,
  title={Lagrangian dynamics on an infinite-dimensional torus; a weak {KAM} theorem},
  author={Gangbo, Wilfrid and Tudorascu, Adrian},
  journal={Advances in Mathematics},
  volume={224},
  number={1},
  pages={260--292},
  year={2010},
  publisher={Elsevier}
}

@article{Gangbo-2,
  title={A weak {KAM} theorem; from finite to infinite dimension},
  author={Gangbo, Wilfrid and Tudorascu, Adrian},
  journal={Optimal transportation, geometry and functional inequalities},
  volume={11},
  pages={45--72},
  year={2010}
}

@article{feng,
  title={On a {Hamilton--Jacobi} {PDE} theory for hydrodynamic limit of action minimizing collective dynamics},
  author={Feng, Jin},
  journal={arXiv preprint arXiv:2512.20809},
  year={2025}
}

@article{fathi2003,
  title={Weak {KAM} theorem in {Lagrangian} dynamics},
  author={Fathi, Albert},
  year={2003},
  publisher={Cambridge University Press Cambridge}
}

@article{padhye1996fluid,
  title={Fluid element relabeling symmetry},
  author={Padhye, Nikhil and Morrison, Philip J},
  journal={Physics Letters A},
  volume={219},
  number={5-6},
  pages={287--292},
  year={1996},
  publisher={Elsevier}
}

@article{gomes2015,
  title={On the minimizers of calculus of variations problems in {Hilbert} spaces},
  author={Gomes, Diogo and Nurbekyan, Levon},
  journal={Calculus of Variations and Partial Differential Equations},
  volume={52},
  number={1},
  pages={65--93},
  year={2015},
  publisher={Springer}
}

@article{Ding-Ekren-Han-Zitridis,
  title={Quantitative homogenization of convex {Hamilton--Jacobi} equations in the {Wasserstein} space},
  author={Ding, Zhiyan and Ekren, Ibrahim and Han, Yuxi and Zitridis, Antonios},
  journal={arXiv preprint arXiv:2606.22103},
  year={2026}
}
\end{document}